%% file: ex_article.tex
\documentclass[hidelinks,onefignum,onetabnum]{siamart251216}

\input{ex_shared}

\makeatletter
\newcommand{\IndentedIf}{%
  \algpx@endCodeCommand%
  \addtolength{\ALG@tlm}{\algorithmicindent}%
  \ALG@thistlm\ALG@tlm%
  \ALG@makeentity{If}}
\newcommand{\EndWhileAfterIndent}{%
  \algpx@endCodeCommand%
  \addtolength{\ALG@tlm}{-\algorithmicindent}%
  \ALG@thistlm\ALG@tlm%
  \ALG@makeentity{EndWhile}}

\newcommand{\AlgorithmEqTagShift}{%
  \def\tagform@##1{%
    \maketag@@@{(\ignorespaces##1\unskip\@@italiccorr)}%
  }%
}

\newcommand{\AlgorithmEquation}[2]{%
  \algpx@endCodeCommand%
  \Statex
  \hspace*{\ALG@tlm}%
  \begin{minipage}{\dimexpr\linewidth-\ALG@tlm\relax}%
    \AlgorithmEqTagShift
    \setlength{\abovedisplayskip}{2pt}%
    \setlength{\belowdisplayskip}{2pt}%
    \setlength{\abovedisplayshortskip}{2pt}%
    \setlength{\belowdisplayshortskip}{2pt}%
    \begin{equation}\label{#1}
      #2
    \end{equation}%
  \end{minipage}%
  \vspace{-4.2pt}%
}
\makeatother

\ifpdf
\hypersetup{
  pdftitle={A Safeguarded Accelerated Proximal ADMM Algorithm for Solving Structured Cardinality Penalized Optimization Problems},
  pdfauthor={Wei Bian}
}
\fi

\newsiamremark{assumption}{Assumption}
\crefname{assumption}{Assumption}{Assumptions}
\usepackage{booktabs}
\usepackage{listings}
\usepackage{multirow}

\begin{document}

\maketitle

\begin{abstract}
This paper considers a class of structured cardinality penalized optimization problems with nonsmooth convex loss functions, for which the proximal mapping associated with the structured $\ell_0$ penalty is generally unavailable. To overcome this difficulty, we develop a chain of exact reformulations based on variable splitting, exact penalization, and continuous relaxation, and establish rigorous equivalence relationships among these resulting models. In particular, we define a class of strong local minimizers for the original problem and show that these minimizers are equivalent to weak $d$-stationary points of the relaxation problem.
Further, we propose a safeguarded accelerated proximal alternating direction method of multipliers (SAP-ADMM). We prove that the sequence generated by SAP-ADMM converges to a weak \(d\)-stationary point of the relaxation problem. Depending on the parameter settings and algorithmic regime, we establish $O(1/k)$ or $o(1/k)$ convergence rates for both the objective function value gap and the KKT residual. Numerical experiments on signal and image denoising demonstrate
the effectiveness of both the proposed models and SAP-ADMM in terms of reconstruction quality and computational efficiency.
\end{abstract}

\begin{keywords}
structured cardinality penalty, nonconvex relaxation, accelerated ADMM, global convergence, convergence rate
\end{keywords}

\begin{MSCcodes}
90C46, 90C30, 65K05, 68Q25
\end{MSCcodes}

\section{Introduction}
Given a matrix $A \in \mathbb{R}^{r \times n}$, a full row rank matrix $D \in \mathbb{R}^{m \times n}$ and a parameter $\lambda_0>0$, we consider the following structured cardinality penalized problem in this paper:
\begin{equation}\label{ob0}
	\min_{x\in\mathbb{R}^n} f(Ax)+\lambda_0\left\|Dx\right\|_0,\tag{SCP}
\end{equation}
where $f:\mathbb{R}^r \to \mathbb{R}$ is convex and globally $L_f$-Lipschitz continuous, but not necessarily smooth. Without loss of generality, we assume that problem~\eqref{ob0} admits a minimizer. We further assume that the proximal operator of $f$ can be computed efficiently, whereas the proximal operator of $f\circ A$ is unavailable or difficult to compute directly.
The problem \eqref{ob0} with $D=I_n$ reduces to the classical $\ell_0$ penalized model, which has been extensively studied  \cite{Blumensath2009Iterative,Lu2014Iterative,Lu2012Sparse,Ye2026S}. However, many applications involve variables with inherent ordering, such as spatial locations or temporal samples, where neighboring variables are highly correlated. For example, in image restoration, neighboring pixels are usually similar except across object boundaries; in phoneme classification, predictors from consecutive time frames lead to locally correlated classifier coefficients. In these cases, sparsity is often desired in suitable linear transformations rather than in variables themselves, then various choices of $D$ arise in applications \cite{Ti2011The}. Most existing work on problem \eqref{ob0} considers the least-squares loss \cite{J2020F,L1997v,Ti2011The}, which is sensitive to outliers and unsuitable for censored data \cite{F2001V}. In such settings, $\ell_1$, Huber, and censored regression losses are commonly used. The check loss has also been applied to array comparative genomic hybridization analysis \cite{E2005Q}. These observations motivate us to study problem \eqref{ob0} with a nonsmooth convex loss $f$ and a general linear operator $D$.

The closed-form expression of the proximal operator of $\ell_0$ penalty has enabled the development of many efficient algorithms for problem \eqref{ob0} with $D=I_n$, such as iterative hard thresholding type algorithms \cite{Bian2023A,Blumensath2008Iterative,Blumensath2009Iterative,Lu2014Iterative}, the penalty decomposition algorithm \cite{Lu2012Sparse} and Newton-type algorithms \cite{Ye2026S,Zhou2021N}. Among them, first-order methods are popular for large-scale sparse optimization due to their simplicity and low  computational cost. However, the linear operator $D$ destroys the separability of $\ell_0$ penalty, making the proximal operator of $\|D\cdot\|_0$ generally intractable. 
Although Jewell et al. \cite{J2020F} proposed a polynomial-time algorithm for the case with $D$ as the first-order difference matrix $D_1$, their algorithm strictly relies on the structure of $D_1$. To the best of our knowledge, no efficient algorithm is currently available for computing the proximal operator of $\|D\cdot\|_0$ with a general formulation of $D$. 
Hence, existing hard-thresholding-based methods cannot be directly extended to solve problem \eqref{ob0}. To overcome this difficulty, we introduce an auxiliary variable $y$ to decouple the linear map $D$ from the $\ell_0$ term, leading to the following equivalent constrained reformulation of problem \eqref{ob0}:
\begin{equation}\label{cp2}
\begin{aligned}
\min_{x\in\mathbb{R}^n, y\in\mathbb{R}^m} f(Ax)+\lambda_0\|y\|_0,
\quad \mbox{s.t.}\ y=Dx.
\end{aligned}\tag{SCP\textsubscript{C}}
\end{equation}
Replacing the linear constraint by an exact penalty term with parameter $\lambda_p>0$ yields the following unconstrained penalized problem
\begin{equation}\label{p1}
\min_{x\in\mathbb{R}^n, y\in\mathbb{R}^m} f(Ax)+\lambda_0\|y\|_0+\lambda_p\left\|Dx-y\right\|.
\tag{SCP\textsubscript{P}}
\end{equation}

Besides directly handling the $\ell_0$ penalty, a widely adopted alternative is to replace the $\ell_0$ penalty with a continuous surrogate. The $\ell_1$ norm is a popular choice due to its convexity and computational tractability \cite{Beck2009A,Candes2006Robust}, but it often yields biased estimates, especially for large coefficients. 
To alleviate this issue, various continuous nonconvex surrogates, including the $\ell_p$ $(0<p<1)$ \cite{Chen2010L}, SCAD \cite{F2001V} and capped-$\ell_1$ \cite{Pel2008A} penalties, have been proposed, which generally yield sparser solutions with smaller bias in many applications \cite{Bian2017O,Char2008R}. Among them, the capped-$\ell_1$ penalty has attracted particular attention due to its simple form, favorable theoretical properties and numerical performance \cite{Bian2020a,Le2015D,Xie2026N}. In particular, for the classical $\ell_0$ penalized problem, Bian and Chen \cite{Bian2020a} proved that the capped-$\ell_1$ relaxation is exact for global minimizers and established the equivalence between a class of local minimizers of the original problem and weak $d$-stationary points of the relaxation problem. 
Motivated by these advantages, we adopt the capped-$\ell_1$ function as the relaxation of $\ell_0$ penalty in this paper. 
However, the existing exact relaxation theory is restricted to the case $D=I_n$ and can not be directly extended to problem \eqref{ob0}.
Moreover, since problems \eqref{cp2} and \eqref{p1} can be equivalent under a suitable choice of penalty parameter (see Section \ref{fe}), it is natural to ask whether this equivalence remains valid after replacing the $\ell_0$ penalty with its capped-$\ell_1$ surrogate.
To answer these questions, we study the capped-$\ell_1$ relaxations of problems \eqref{cp2} and \eqref{p1}. Specifically, for a given parameter $\nu>0$, we define the capped-$\ell_1$ function $\phi:\mathbb{R}\to\mathbb{R}$ and its componentwise extension $\Phi:\mathbb{R}^m\to\mathbb{R}$ by 
\begin{equation}\label{pht}
\phi(t)=\min\left\{1,\frac{|t|}{\nu}\right\} \quad \mbox{and}\quad \Phi(y)=\sum_{i=1}^m\phi(y_i),
\end{equation}
where $\Phi$ serves as a continuous nonconvex relaxation of $\ell_0$ penalty.
Replacing the $\ell_0$ penalty in problem \eqref{cp2} with $\Phi$ yields the following relaxation problem:
\begin{equation}\label{rcp2}
\begin{aligned}
\min_{x\in\mathbb{R}^n, y\in\mathbb{R}^m} f(Ax)+\lambda_0\Phi(y),\quad \mbox{s.t.}\ y=Dx.
\end{aligned}\tag{SCP\textsubscript{CR}}
\end{equation}
Similarly, the corresponding relaxation of problem \eqref{p1} is
\begin{equation}\label{rp1}
\min_{x\in\mathbb{R}^n, y\in\mathbb{R}^m} f(Ax)+\lambda_0\Phi\left(y\right)+\lambda_p\left\|Dx-y\right\|.\tag{SCP\textsubscript{PR}}
\end{equation}
One of the main goals of this paper is to establish relationships among problems \eqref{cp2}, \eqref{p1}, \eqref{rcp2} and \eqref{rp1}, thereby providing a theoretical foundation for solving problem \eqref{ob0} via the capped-$\ell_1$ relaxation.

We note that a general composite optimization problem $\min_{x\in\mathbb{R}^n} f(x)+g(Dx)$ with a proper and closed function $g:\mathbb{R}^m\to\mathbb{R}\cup\{+\infty\}$, has been extensively studied. This framework covers many structured optimization models, including the fused Lasso \cite{Ti2005Sp} and the generalized Lasso \cite{Ti2011The}.
When $g$ is convex, a common strategy is to introduce an auxiliary variable and solve the resulting constrained problem by the alternating direction method of multipliers (ADMM), whose convergence is established in \cite{EC1992o,Ga1976}. If $g$ is further  Lipschitz continuous, two accelerated algorithms based on gradient descent and proximal gradient schemes were proposed in \cite{bot2020V,bot2015AV} by using Moreau envelope smoothing, and \cite{TRAN2018a} developed a smoothing primal-dual algorithm based on Bregman functions.
Recently, Sun et al. \cite{Sun2025Ac} designed an accelerated preconditioned ADMM for linearly constrained convex problems, establishing the global convergence of the iterates together with asymptotic $o(1/k)$ and nonasymptotic $O(1/k)$ convergence rates. However, these methods rely on the convexity of $g$ and can not be applied to the problems with nonconvex penalties $\ell_0$ and $\Phi$ considered here. 

Nonconvex choices of $g$ have attracted increasing attention due to their superior sparsity-inducing ability \cite{L1997v,L2019A,Par2015C,Wu2025An}, and various algorithms have been developed under different assumptions on $f$ and $D$. We note that many existing methods require $f$ to be twice or Lipschitz continuously differentiable and $D$ to be surjective or full row rank, and establish global convergence by using the Kurdyka-\L ojasiewicz (K\L) property. For example, Li and Pong studied a proximal ADMM framework \cite{Li2015Glo}; Bolte et al.  proposed a generic adaptive framework including the proximal linearized alternating minimization method \cite{Bolte2018NO}; and Bo\c{t} and Nguyen developed proximal (linearized) ADMM algorithms \cite{bot2020The}. Without relying on the K\L\  property, \cite{Li2015Glo} and \cite{Bolte2018NO} showed that any accumulation point of the iterates is a stationary point. For the case where $g$ is continuous, Liu et al. \cite{L2019A} proposed a successive difference-of-convex approximation method (SDCAM), and proved the same convergence results. SDCAM follows a double-loop framework involving an approximate solution of the inner subproblem at each outer iteration. This nested structure may increase the computational burden and implementation complexity, which motivates the development of single-loop algorithms that avoid inner iterative solves.
Therefore, existing approaches still lack global convergence guarantees and convergence rate analysis for problems with a nonsmooth loss function and nonconvex penalties.
For \eqref{ob0} with a general $D$, the relationship between the $\ell_0$ penalized model and its nonconvex relaxation is also unclear. Moreover, the $\ell_0$ penalty directly controls the number of nonzero elements, which facilitates the identification of important predictors and model interpretability \cite{Bian2020a,L1997v,Wu2025An}. These considerations motivate us to study problem \eqref{ob0} directly, and develop a first-order single-loop algorithm for it, with global convergence guarantees independent of the K\L\ property and explicit convergence rates.


To sum up, the main theoretical contributions of this paper are as follows:
\begin{itemize}
	\item We establish a theoretical framework connecting problems \eqref{ob0}, \eqref{cp2}, \eqref{p1}, \eqref{rcp2} and \eqref{rp1}. Under mild conditions, we prove that \eqref{p1} and \eqref{cp2}, as well as \eqref{rp1} and \eqref{rcp2}, share the same global and local minimizers. We also define a class of strong local minimizers of \eqref{ob0} and prove that they coincide with the weak $d$-stationary points of \eqref{rp1}. The detailed relationships among these problems are illustrated in Fig.~\ref{re}.
	\item We propose a safeguarded accelerated proximal ADMM (SAP-ADMM) algorithm that integrates a safeguarded acceleration strategy with the fast Krasnosel'ski\u{\i}--Mann (KM) and Halpern iterations. We show that the iterates generated by SAP-ADMM converge globally to a strong local minimizer of \eqref{ob0}. Moreover, depending on the parameter settings and the corresponding iteration regime, we establish that SAP-ADMM achieves either an asymptotic convergence rate of $o(1/k)$ or a nonasymptotic convergence rate of $O(1/k)$ for both the objective function value gap and the KKT residual.
\end{itemize}
\begin{figure}[htbp]
	\centering
	\includegraphics[width=\textwidth]{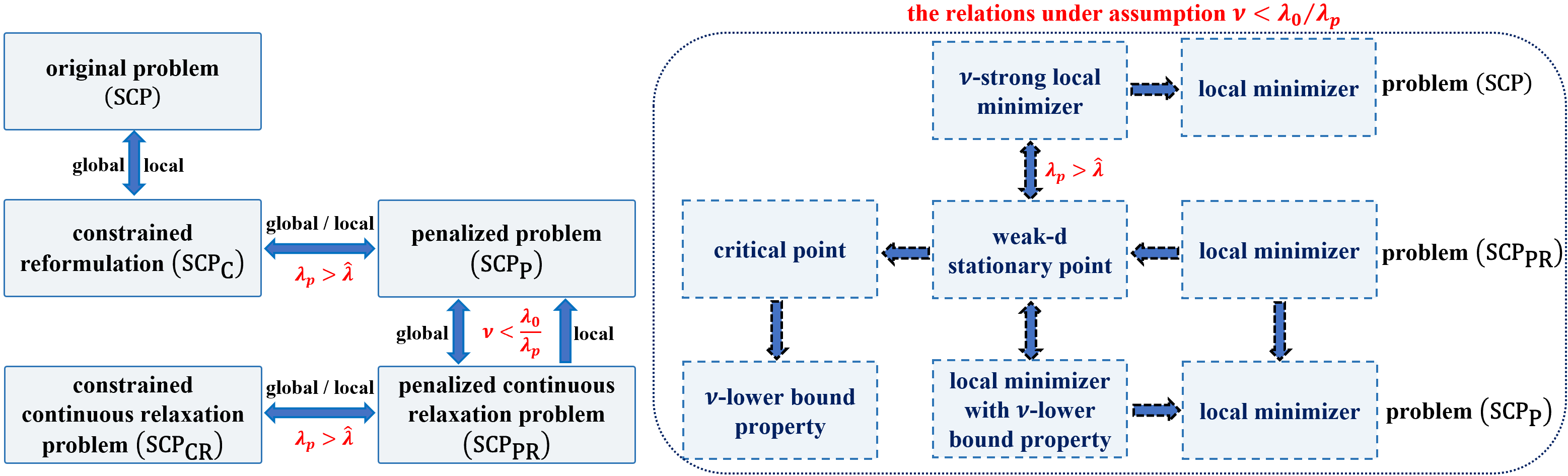}
	\caption{Relations among problems \eqref{ob0}, \eqref{cp2}, \eqref{p1}, \eqref{rcp2} and \eqref{rp1}. Here, $\hat{\lambda}:=\frac{\|A\|L_f}{\sigma_{\min}(D^{\top})}$, and “local” and “global” denote the local and global minimizers of the corresponding problems, respectively.}
	\label{re}
\end{figure}

The paper is organized as follows. In Section~\ref{sec:main}, we review some preliminaries and establish the relationships among global minimizers, local minimizers, and stationary points of the proposed models. In Section~\ref{sec:algorithm}, we propose the SAP-ADMM algorithm and analyze its convergence. Numerical results are reported in Section~\ref{sec:experiments} to demonstrate the higher-quality reconstruction of the proposed model than those based on smooth loss functions and convex penalties, and favorable trade-off between computational efficiency and reconstruction accuracy of the proposed algorithm. Finally, we conclude the paper in Section~\ref{sec:conclusions}.

\textbf{Notations} Let $\mathbb{Z}_+$ denote the set of positive integers and $\mathbb{R}^n$ be the $n$-dimensional Euclidean space. For any symmetric positive semidefinite matrix $\mathcal M\in\mathbb{R}^{n\times n}$, denote $\|x\|_{\mathcal M}^2=\langle x,\mathcal Mx\rangle$. For any $m\in \mathbb{Z}_+$, let $[m]:=\{1,2,\cdots,m\}$ and $\mathbb{D}^m:=[3]^m$.  $\|\cdot\|$ denotes the Euclidean norm.
For a set $U\subseteq \mathbb{R}^n$, let $\mathrm{co} U$ denote the convex hull of $U$. For any $\varsigma>0$, define
$B(x, \varsigma)=\{y\in\mathbb{R}^n: \|y-x\|\leq \varsigma\}.$
Let $I_m$ be the $m\times m$ identity matrix, $e_i$ its $i$-th column, and $\mathbf{0}$ the zero vector. 
For any matrix \(B\), let \(B^\top\), \(\|B\|\), and
\(\sigma_{\min}(B)\) denote its transpose, spectral norm, and smallest
singular value, respectively. For a proper closed function $h:\mathbb{R}^n\to(-\infty,+\infty]$, let $\partial h$ denote its limiting subdifferential \cite{rockafellar2009variational}, and define its proximal operator by $\operatorname{prox}_h(y)=\arg\min_{x\in\mathbb{R}^n}\{h(x)+\|x-y\|^2/2\}.$

\section{From the original problem to penalized and relaxed formulations}
\label{sec:main}
In this section, we first collect several preliminary results that will be used in the subsequent analysis. Then, we investigate the relationships among problems \eqref{cp2}, \eqref{p1}, \eqref{rcp2}, and \eqref{rp1}, thereby clarifying how the original problem is related to its penalized and relaxed formulations. 
\subsection{Preliminaries} \label{np}
We first introduce a notion of stationary point for a class of optimization problems, which will be subsequently applied to problems \eqref{p1} and \eqref{rp1}.

\begin{definition}\label{cd}
Let $h_i:\mathbb{R}^n\to(-\infty,+\infty]$, $i\in[N]$, be proper closed functions. We call $\bar{z}\in\mathbb{R}^n$ a (limiting) stationary point of problem $\min_{z\in\mathbb{R}^n}\sum_{i=1}^N h_i(z)$ if
$\mathbf{0}\in\sum_{i=1}^N\partial h_i(\bar{z})$.
\end{definition}

Any local minimizer of problems \eqref{p1} and \eqref{rp1} is a stationary point of the corresponding problem by \cite[Exercise~10.10]{rockafellar2009variational}. For notational simplicity, we define
\begin{equation}\label{pd}
g(x,y):=\|Dx-y\|,\quad \forall x\in\mathbb{R}^n,\ y \in \mathbb{R}^m.
\end{equation}
Clearly, $g(x,y)=0$ if and only if $y=Dx.$
Using \cite[Theorem 23.9]{RocC}, we obtain
\begin{equation}\label{w2}
\partial g(x, y)=\left\{
\begin{aligned}
&\begin{pmatrix}
\frac{D^{\top}(Dx-y)}{g(x,y)}\\[1.2ex]
\frac{-(Dx-y)}{g(x,y)}
\end{pmatrix},& &\quad\mbox{if}\ y\neq Dx, \\
&\left\{\begin{pmatrix}
D^{\top} \\
-I_m
\end{pmatrix}\xi:\|\xi\|\leq 1\right\},& &\quad\mbox{if}\ y= Dx.
\end{aligned}
\right.
\end{equation}

We next present some properties of $\Phi$ and some notations used throughout the paper. By \eqref{pht}, $\phi$ admits the following DC (difference-of-convex) decomposition:
\begin{equation}\label{dp}
\phi(t)=\frac{|t|}{\nu}-\max \{\theta_1(t), \theta_2(t), \theta_3(t)\}, \quad\forall t\in\mathbb{R},
\end{equation}
where $\theta_1(t)=0$, $\theta_2(t)=\frac{t}{\nu}-1$ and $\theta_3(t)=-\frac{t}{\nu}-1$. 
For any $t\in\mathbb{R}$, $y\in\mathbb{R}^m$, and $d\in \mathbb{D}^m$, define $\mathcal{D}(t)=\{i\in [3]: \theta_i(t)=\max_{j\in[3]}\theta_j(t)\}$, $\mathcal{D}^{y}=\{z\in\mathbb{D}^m: z_i\in\mathcal{D}(y_i), \forall i\in [m]\}$ and $\Phi^{d}(y)=\sum_{i=1}^m \frac{|y_i|}{\nu}- \sum_{i=1}^m \theta_{d_i}(y_i)$. The function $\Phi^{d}$ is convex for each $d\in\mathbb{D}^m$. Moreover, by \eqref{dp}, we have
\begin{equation}\label{phd}
\Phi(y)=\min_{d\in\mathbb{D}^m}\Phi^{d}(y), \quad\forall y\in\mathbb{R}^m,
\end{equation}
and can introduce the notion of weak $d$-stationary point for problem \eqref{rp1}.

\begin{definition}\cite{C2017}\label{wd_point}
	A point $(\hat{x}, \hat{y})\in\mathbb{R}^n\times\mathbb{R}^m$ is called a weak $d$(directional)-stationary point of problem \eqref{rp1} if there exist $\hat{d}_i\in\mathcal{D}(\hat{y}_i)$ for all $i\in [m]$ such that
	\begin{equation}\label{lf}
	\mathbf{0}\in 
	\begin{pmatrix}
	A^{\top}\partial f(A\hat{x}) \\
	\frac{\lambda_0}{\nu}\sum_{i=1}^m\partial |\hat{y}_i|e_i-\lambda_0\sum_{i=1}^m\theta_{\hat{d}_i}'(\hat{y}_i)e_i
	\end{pmatrix}
	+\lambda_p\partial g(\hat{x}, \hat{y}).
	\end{equation}
\end{definition}

Every local minimizer of \eqref{rp1} is a weak $d$-stationary point of it. Moreover, by Definition \ref{cd} and $\partial\!\left(
-\sum_{i=1}^m\max_{j\in[3]}\theta_j(\hat y_i)
\right)
=
\bigcup_{\hat d\in\mathcal D^{\hat y}}
\left\{
-\sum_{i=1}^m\theta_{\hat d_i}'(\hat y_i)e_i
\right\}$, the notions of weak $d$-stationary point and stationary point coincide for problem \eqref{rp1}.

\subsection{Equivalence between problems \eqref{cp2} and \eqref{p1}}\label{fe}
This subsection is devoted to establishing the equivalence between problems \eqref{cp2} and \eqref{p1} in terms of global and local minimizers. We first state the following assumption and then show every stationary point of problem \eqref{p1} is feasible for problem \eqref{cp2}.

\begin{assumption}\label{asp}
The penalty parameter $\lambda_p$ in \eqref{p1} satisfies $\lambda_p > \frac{\|A\|L_f}{\sigma_{\min}(D^{\top})}$.
\end{assumption}

\begin{lemma}\label{sp}
Under Assumption \ref{asp}, any stationary point $(x^*, y^*)$ of \eqref{p1} satisfies $g(x^*, y^*)=0$.
\end{lemma}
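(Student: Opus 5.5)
Suppose, for contradiction, that $(x^*,y^*)$ is a stationary point of \eqref{p1} with $g(x^*,y^*)\neq 0$, i.e., $y^*\neq Dx^*$. The plan is to show that the $x$-block of the stationarity inclusion in Definition \ref{cd} then forces $\lambda_p\le \|A\|L_f/\sigma_{\min}(D^\top)$, contradicting Assumption \ref{asp}. The $y$-block, which contains the $\ell_0$ term, will not be used at all.

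First I compute the partial subdifferentials. The objective is a sum of three functions: $h_1(x,y)=f(Ax)$, $h_2(x,y)=\lambda_0\|y\|_0$, and $h_3=\lambda_p g$.
\begin{itemize}
\item Since $f$ is convex and finite, $\partial h_1(x,y)=A^\top\partial f(Ax)\times\{\mathbf 0\}$.
\item Since $h_2$ depends only on $y$, $\partial h_2(x,y)=\{\mathbf 0\}\times\lambda_0\partial\|y\|_0$.
\item Since $y^*\neq Dx^*$, $g$ is smooth near $(x^*,y^*)$, and by \eqref{w2} its gradient there is $\big(D^\top w,\,-w\big)$ with $w:=(Dx^*-y^*)/\|Dx^*-y^*\|$ and $\|w\|=1$.
\end{itemize}

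Next I read off the $x$-component of $\mathbf 0\in\sum_i\partial h_i(x^*,y^*)$. It gives some $v\in\partial f(Ax^*)$ with
\[
\mathbf 0=A^\top v+\lambda_p D^\top w .
\]
Because $f$ is convex and $L_f$-Lipschitz, $\|v\|\le L_f$, so $\|A^\top v\|\le \|A\|L_f$. Because $D$ has full row rank, $D^\top$ is injective and $\|D^\top w\|\ge\sigma_{\min}(D^\top)\|w\|=\sigma_{\min}(D^\top)>0$. Taking norms in the displayed identity yields
\[
\lambda_p\,\sigma_{\min}(D^\top)\le \|A\|L_f ,
\]
which contradicts Assumption \ref{asp}. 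Hence $g(x^*,y^*)=0$.

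The only delicate point is justifying that the $x$-block of the subdifferential sum decouples as above, given that Definition \ref{cd} sums limiting subdifferentials of separate functions. This needs two facts. First, the limiting subdifferential of a function that depends only on $y$ (namely $\|y\|_0$) has zero $x$-component; this follows from \cite[Proposition~10.5]{rockafellar2009variational} for separable functions. Second, the subdifferentials of $f\circ A$ and $g$ are the convex ones, which holds since both functions are convex.
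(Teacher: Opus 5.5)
Your proposal is correct and follows essentially the same argument as the paper. Both argue by contradiction, keep only the $x$-block of the stationarity inclusion $\mathbf{0}=A^\top\xi^*+\lambda_p D^\top(Dx^*-y^*)/g(x^*,y^*)$, and combine $\|\xi^*\|\le L_f$ with $\|D^\top w\|\ge\sigma_{\min}(D^\top)$ to contradict Assumption~\ref{asp}. Your explicit justification that the limiting subdifferentials decouple blockwise is a detail the paper leaves implicit.
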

\begin{proof}
We prove this result by contradiction. Suppose that $g(x^*, y^*)\neq 0$. By the definition of the stationary point, it holds that
$$
\mathbf{0}\in 
\begin{pmatrix}
A^{\top}\partial f(Ax^*) \\
\lambda_0\partial \|y^*\|_0
\end{pmatrix}
+\lambda_p\partial g(x^*, y^*).
$$
By $y^*\neq Dx^*$, we have $\partial_x g(x^*,y^*)=\left\{D^\top(Dx^*-y^*)/g(x^*,y^*)\right\}$. Thus there exists $\xi^*\in\partial f(Ax^*)$ such that 
$\mathbf{0}=A^\top\xi^*+\lambda_p D^\top(Dx^*-y^*)/g(x^*,y^*)$. This yields  
\begin{equation*}\label{e2} 
\|A^\top\|\cdot\|\xi^*\|\geq\|A^\top\xi^*\|=\lambda_p \left\|\frac{D^\top(Dx^*-y^*)}{g(x^*, y^*)}\right\|  
\geq \lambda_p\sigma_{\min}(D^\top). 
\end{equation*} 
Then, we obtain
$\lambda_p  
\leq \frac{\|A^\top\|\|\xi^*\|}{\sigma_{\min}(D^\top)} 
\leq \frac{\|A^\top\|L_f}{\sigma_{\min}(D^\top)},$  
which contradicts Assumption~\ref{asp}.
\end{proof}

With the aid of the above result, we now establish the equivalence between problem \eqref{cp2} and its penalized problem \eqref{p1}.

\begin{proposition}\label{lg1}
	Under Assumption \ref{asp}, we have the following facts.
\begin{itemize}
	\item [(i)] $(x^*, y^*)$ is a local minimizer of \eqref{p1} if and only if it is a local minimizer of \eqref{cp2}, attaining the same objective value at $(x^*, y^*)$.
	
	\item [(ii)] The problems \eqref{p1} and \eqref{cp2} share the same global minimizers and optimal value.
\end{itemize}
\end{proposition}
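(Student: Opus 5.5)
The approach is to compare the two objectives directly. Write $F_C(x,y)=f(Ax)+\lambda_0\|y\|_0$, restricted to the feasible set $\{y=Dx\}$, and $F_P(x,y)=F_C(x,y)+\lambda_p g(x,y)$. On the feasible set the two functions coincide, so both the ``same objective value'' claim in (i) and the easy inclusions follow from $F_P\ge F_C$ together with $F_P=F_C$ when $g=0$.

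For (i), the direction ``local minimizer of \eqref{p1} $\Rightarrow$ local minimizer of \eqref{cp2}'' is short. A local minimizer $(x^*,y^*)$ of \eqref{p1} is a stationary point by \cite[Exercise~10.10]{rockafellar2009variational}, so Lemma~\ref{sp} gives $g(x^*,y^*)=0$, i.e.\ the point is feasible for \eqref{cp2}. For any feasible $(x,y)$ near it we then have $F_C(x,y)=F_P(x,y)\ge F_P(x^*,y^*)=F_C(x^*,y^*)$.

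The converse direction is the main obstacle, because $F_P$ must now be compared at infeasible nearby points. Let $(x^*,y^*)$ be a local minimizer of \eqref{cp2} on $B((x^*,y^*),\varsigma)$, and take $(x,y)$ near it with $y\neq Dx$. I would handle this in two steps.

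\begin{enumerate}
\item Use lower semicontinuity of $\|\cdot\|_0$: for $y$ close to $y^*$ we have $\|y\|_0\ge\|y^*\|_0$, and in fact $\mathrm{supp}(y)\supseteq\mathrm{supp}(y^*)$.
\item Construct a feasible comparison point and argue as follows.
\begin{itemize}
\item Choose $\tilde x$ with $D\tilde x=y$. Since $D$ has full row rank, take $\tilde x=x-D^\top(DD^\top)^{-1}(Dx-y)$, so that $\|\tilde x-x\|\le\|Dx-y\|/\sigma_{\min}(D^\top)$.
\item The point $(\tilde x,y)$ is feasible and stays close to $(x^*,y^*)$, since $\|Dx-y\|\to0$ as $(x,y)\to(x^*,y^*)$. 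Hence $f(A\tilde x)+\lambda_0\|y\|_0\ge F_C(x^*,y^*)$.
\item Lipschitz continuity of $f$ gives
\[
f(Ax)\ge f(A\tilde x)-L_f\|A\|\,\|Dx-y\|/\sigma_{\min}(D^\top).
\]
\item Combining these yields
\[
F_P(x,y)\ge F_C(x^*,y^*)+\bigl(\lambda_p-\|A\|L_f/\sigma_{\min}(D^\top)\bigr)\|Dx-y\|\ge F_P(x^*,y^*)
\]
by Assumption~\ref{asp}.
\end{itemize}
\end{enumerate}

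The only care needed is choosing the neighborhood radius small enough that $(\tilde x,y)$ lies in $B((x^*,y^*),\varsigma)$. Using $\|Dx-y\|\le\|Dx-Dx^*\|+\|y-y^*\|$, a radius of order $\varsigma/(1+\|D\|/\sigma_{\min}(D^\top))$ suffices. Note that step 1 is not even needed, since the feasible-point comparison already absorbs the $\ell_0$ term with the same $y$.

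For (ii) I would repeat the same construction globally, with no neighborhood restriction. For any $(x,y)$ project to $(\tilde x,y)$ as above to obtain $F_P(x,y)\ge F_P(\tilde x,y)=F_C(\tilde x,y)\ge \inf F_C$, and the inequality is strict when $y\neq Dx$. Hence $\inf F_P=\inf F_C$ (the reverse inequality is trivial, since $F_P=F_C$ on the feasible set). Every global minimizer of \eqref{p1} is therefore feasible and globally optimal for \eqref{cp2}, and conversely.
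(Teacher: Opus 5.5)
Your proposal is correct and follows essentially the same route as the paper. Feasibility of local minimizers of \eqref{p1} comes from stationarity plus Lemma~\ref{sp}; the converse uses the feasible comparison point $\tilde x=x-D^\top(DD^\top)^{-1}(Dx-y)$, the Lipschitz estimate and Assumption~\ref{asp}; and (ii) runs the same projection globally, where your strict inequality forcing feasibility of global minimizers of \eqref{p1} neatly makes the paper's ``remove the neighborhood restrictions'' remark precise. One small slip: your radius $\varsigma/(1+\|D\|/\sigma_{\min}(D^\top))$ ignores that $\|y-y^*\|$ also enters $\|\tilde x-x\|$ with factor $1/\sigma_{\min}(D^\top)$ (take $D=\epsilon I$, $x=x^*$, $y=y^*+re$ with $\|e\|=1$, giving $\|\tilde x-x^*\|=r/\epsilon$), so you need something like $\varsigma/\bigl(2+(1+\|D\|)/\sigma_{\min}(D^\top)\bigr)$; this is why the paper's $\hat\delta$ contains the term $\delta\sigma_{\min}(D^\top)/4$, and since any sufficiently small radius works, the argument is unaffected.
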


\begin{proof}
We only prove the equivalence of local minimizers. The equivalence of global minimizers then follows by the same arguments, with the neighborhood restrictions removed.

``$\Rightarrow$" If $(x^*, y^*)$ is a local minimizer of \eqref{p1}, then $y^*=Dx^*$ by Lemma \ref{sp}, and there exists $\delta>0$ such that for all $x\in B(x^*, \delta)$ and $y\in B(y^*, \delta)$,
\begin{equation}\label{lc2}
\begin{aligned}
f(Ax^*)+\lambda_0\|y^*\|_0 
\leq f(Ax)+\lambda_0\|y\|_0+\lambda_p g(x, y).
\end{aligned}
\end{equation}
 Moreover, there exists $0<\delta'\leq\delta$ such that $Dx \in B(y^*, \delta)$ whenever $x \in B(x^*, \delta')$.
Restricting \eqref{lc2} to points satisfying $y = Dx$, we obtain that for all $x \in B(x^*, \delta')$,
$
f(Ax^*)+\lambda_0\|y^*\|_0\leq f(Ax)+\lambda_0\|Dx\|_0= f(Ax)+\lambda_0\|y\|_0.
$
This shows that $(x^*, y^*)$ is a local minimizer of \eqref{cp2}.

``$\Leftarrow$" Suppose that $(x^*, y^*)$ is a local minimizer of \eqref{cp2}. Then $y^*=Dx^*$ and there exists $\delta>0$ such that for all $x\in B(x^*, \delta)$, $y\in B(y^*, \delta)$ satisfying $y=Dx$,
\begin{equation}\label{c2}
\begin{aligned}
f(Ax^*)+\lambda_0\|y^*\|_0\leq f(Ax)+\lambda_0\|y\|_0,
\end{aligned}
\end{equation}

Denote $\hat{\delta}=\min\left\{\frac{\delta\sigma_{\min}(D^{\top})}{4\|D\|}, \frac{\delta\sigma_{\min}(D^{\top})}{4}, \frac{\delta}{2}\right\}$. 
For all $x\in B(x^*, \hat{\delta})$ and $y\in B(y^*, \hat{\delta})$, since $D$ has full row rank, we can define $\tilde{x}:=x-D^\top(DD^\top)^{-1}(Dx-y)$. Then,
\begin{equation}\label{tl3}
D\tilde{x}=Dx-DD^\top(DD^\top)^{-1}(Dx-y)=Dx-(Dx-y)=y.
\end{equation}
Moreover, we have
\begin{equation}\label{tl1}
\begin{aligned}
\|\tilde{x}-x\|=\|D^\top(DD^\top)^{-1}(Dx-y)\|
&\leq \|D^\top(DD^\top)^{-1}\|\cdot g(x, y) \\
&=\frac{1}{\sigma_{\min}(D^{\top})}g(x, y)
\end{aligned}
\end{equation}
and 
\begin{equation}\label{q1}
\begin{aligned}
g(x, y)=\|Dx-y\|
&\leq \|D\|\cdot\|x-x^*\|+\|y^*-y\| \\
&\leq \|D\|\cdot\frac{\delta\sigma_{\min}(D^{\top})}{4\|D\|}+\frac{\delta}{4}\sigma_{\min}(D^{\top})
=\frac{\delta}{2}\sigma_{\min}(D^{\top}).
\end{aligned}
\end{equation}
Based on \eqref{tl1} and \eqref{q1}, we obtain $\|\tilde{x}-x\|\leq \frac{\delta}{2}$. Hence,
$
\|\tilde{x}-x^*\|\leq \|\tilde{x}-x\|+\|x-x^*\|\leq \frac{\delta}{2}+\frac{\delta}{2}=\delta,
$
which implies that $\tilde{x}\in B(x^*, \delta).$
By the Lipschitz continuity of $f$ and \eqref{tl1}, for any $x\in B(x^*, \hat{\delta})$ and $y\in B(y^*, \hat{\delta})$, it holds that
\begin{equation*}
\begin{aligned}
\left|f(A\tilde{x})-f(Ax)\right|
\leq L_f\|A\|\|x-\tilde{x}\| 
\leq \frac{L_f\|A\|}{\sigma_{\min}(D^{\top})}g(x, y),
\end{aligned}
\end{equation*}
which means that 
\begin{equation*}
\begin{aligned}
f(Ax)+\lambda_0\|y\|_0
&\geq f(A\tilde{x})+\lambda_0\|y\|_0-\frac{L_f\|A\|}{\sigma_{\min}(D^{\top})}g(x, y) \\
&\geq f(Ax^*)+\lambda_0\|y^*\|_0-\frac{L_f\|A\|}{\sigma_{\min}(D^{\top})}g(x, y),
\end{aligned}
\end{equation*}
where the last inequality follows from $\tilde{x}\in B(x^*, \delta)$, \eqref{c2} and \eqref{tl3}.
Consequently, for any $x\in B(x^*, \hat{\delta})$ and $y\in B(y^*, \hat{\delta})$, 
\begin{equation*}
\begin{aligned}
&f(Ax)+\lambda_0\|y\|_0+\lambda_pg(x, y) \\
\geq& f(Ax^*)+\lambda_0\|y^*\|_0+\left(\lambda_p-\frac{L_f\|A\|}{\sigma_{\min}(D^{\top})}\right)g(x, y) \\
\geq &f(Ax^*)+\lambda_0\|y^*\|_0 
=f(Ax^*)+\lambda_0\|y^*\|_0+\lambda_pg(x^*, y^*),
\end{aligned}
\end{equation*}
where the second inequality holds due to Assumption \ref{asp}, while the last equality uses $Dx^*=y^*$.
Therefore, $(x^*, y^*)$ is a local minimizer of \eqref{p1}.

Based on the above arguments, we conclude that problems \eqref{p1} and \eqref{cp2} attain the same objective function value at the point $(x^*, y^*)$.
\end{proof}

We conclude this subsection by providing an equivalent characterization of the local minimizers of problem \eqref{p1}.

\begin{remark}\label{ro3}
	$(\hat{x}, \hat{y})$ is a local minimizer of problem \eqref{p1} if and only if $(\hat{x}, \hat{y})$ is a global minimizer of $f(Ax)+\lambda_pg(x,y)$ on $\mathbb{R}^n\times \mathbb{R}_s^m$, which is equivalent to
\begin{equation}\label{w1}
\mathbf{0}\in 
\begin{pmatrix}
A^{\top}\partial f(A\hat{x}) \\
0
\end{pmatrix}
+\lambda_p\partial g(\hat{x}, \hat{y})
+\begin{pmatrix}
0\\
\mathcal{N}_{\mathbb{R}_s^m} (\hat{y})
\end{pmatrix},
\end{equation}
where $\mathbb{R}_s^m=\{y\in\mathbb{R}^m: y_i=0, \forall i\in\mathcal{I}(\hat{y})\}$ with $\mathcal{I}(\hat{y})=\{i\in[m]: \hat{y}_i=0\}$, and $\mathcal{N}_{\mathbb{R}_s^m} (\hat{y})=\{z\in\mathbb{R}^m: z_i\in\mathbb{R}, i\in \mathcal{I}(\hat{y}); z_i=0, i\notin \mathcal{I}(\hat{y})\}$.
\end{remark}

\subsection{Relationships among problems \eqref{rcp2}, \eqref{rp1} and \eqref{p1}}
In this subsection, we first establish the equivalence between problems \eqref{rcp2} and \eqref{rp1}. We then investigate the relationships between problems \eqref{rp1} and \eqref{p1}. To facilitate the subsequent analysis, we first recall a weaker stationarity notion for problem \eqref{rp1}, namely, a critical point.
\begin{definition}\cite{C2017}
$(x^*, y^*)$ is called a critical point of \eqref{rp1} if it satisfies 
\begin{equation*}
\mathbf{0}\in 
\begin{pmatrix}
A^{\top}\partial f(Ax^*) \\
\frac{\lambda_0}{\nu}\sum_{i=1}^m\partial|y_i^*|e_i
\end{pmatrix}
-\begin{pmatrix}
0 \\
\lambda_0\partial (\sum_{i=1}^m\max_{j\in[3]}\theta_j(y^*_i))
\end{pmatrix}+\lambda_p\partial g(x^*, y^*).
\end{equation*}
\end{definition}
Clearly, every weak $d$-stationary point is a critical point for \eqref{rp1}. Using arguments similar to those in 
Lemma~\ref{sp} and Proposition~\ref{lg1}, we can establish the following results. The proof is therefore omitted.

\begin{proposition}\label{lg2}
Under Assumption \ref{asp}, the following statements hold.
	\begin{itemize}
		\item [(i)] If $(x^*, y^*)$ is a critical point of problem \eqref{rp1}, then $y^*=Dx^*$.
		\item [(ii)] $(x^*, y^*)$ is a global (resp. local) minimizer of \eqref{rcp2} if and only if it is a global (resp. local) minimizer of \eqref{rp1}, and both problems achieve the same objective function value at $(x^*, y^*)$.
	\end{itemize}
\end{proposition}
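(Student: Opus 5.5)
For part (i), I will argue by contradiction, following the proof of Lemma \ref{sp} almost verbatim. Suppose $(x^*,y^*)$ is a critical point of \eqref{rp1} with $y^*\neq Dx^*$. Then $g$ is differentiable at $(x^*,y^*)$, and by \eqref{w2}, $\partial_x g(x^*,y^*)=\{D^\top(Dx^*-y^*)/g(x^*,y^*)\}$. The term involving $\Phi$, namely $\frac{\lambda_0}{\nu}\sum_i\partial|y_i^*|e_i$ minus the DC concave part, acts only in the $y$-block. Hence the $x$-block of the critical point inclusion reads $\mathbf{0}=A^\top\xi^*+\lambda_p D^\top(Dx^*-y^*)/g(x^*,y^*)$ for some $\xi^*\in\partial f(Ax^*)$. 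Since $f$ is $L_f$-Lipschitz, $\|\xi^*\|\le L_f$. Because the vector $(Dx^*-y^*)/g(x^*,y^*)$ has unit norm, we get $\|D^\top (Dx^*-y^*)\|/g\ge\sigma_{\min}(D^\top)$. Together these give $\lambda_p\le \|A\|L_f/\sigma_{\min}(D^\top)$, which contradicts Assumption \ref{asp}. The one point to check is that the critical point inclusion decouples blockwise, so that the $x$-component involves only $A^\top\partial f(Ax^*)$ and $\lambda_p\partial_x g$. This holds because $\partial g$ at a point of differentiability is the singleton gradient, and the remaining terms are explicitly of the form $(0,\cdot)$.

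For part (ii), I will mirror the proof of Proposition \ref{lg1}, replacing $\lambda_0\|y\|_0$ by $\lambda_0\Phi(y)$. In the direction from \eqref{rp1} to \eqref{rcp2}, a local minimizer of \eqref{rp1} is a weak $d$-stationary point, hence a critical point. Part (i) then gives $y^*=Dx^*$. Restricting the local minimality inequality to pairs $(x,Dx)$ with $x$ close to $x^*$ yields local minimality for \eqref{rcp2}, and the objective values agree because $g(x^*,y^*)=0$.

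For the converse, take a local minimizer of \eqref{rcp2} with radius $\delta$. I define $\hat\delta$ exactly as before and, for $(x,y)\in B(x^*,\hat\delta)\times B(y^*,\hat\delta)$, set $\tilde x=x-D^\top(DD^\top)^{-1}(Dx-y)$. This gives $D\tilde x=y$ and $\|\tilde x-x\|\le g(x,y)/\sigma_{\min}(D^\top)\le\delta/2$, so $(\tilde x,y)$ is feasible for \eqref{rcp2} and lies in the $\delta$-neighborhood. The Lipschitz continuity of $f\circ A$ then gives
\[
f(Ax)+\lambda_0\Phi(y)+\lambda_p g(x,y)\ge f(Ax^*)+\lambda_0\Phi(y^*)+\Big(\lambda_p-\tfrac{L_f\|A\|}{\sigma_{\min}(D^\top)}\Big)g(x,y),
\]
and the last term is nonnegative by Assumption \ref{asp}.

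The global statement follows by the same two arguments with the neighborhoods removed. For the direction from \eqref{rp1} to \eqref{rcp2}, a global minimizer of \eqref{rp1} is in particular a local minimizer, so it is feasible for \eqref{rcp2} and the restriction argument applies. For the converse, any $(x,y)$ is compared with the feasible point $(\tilde x,y)$.

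The main obstacle is only bookkeeping in part (i): confirming that stationarity of the $\Phi$-part, which is nonsmooth and nonconvex, never contaminates the $x$-block. The remaining steps are a direct transcription of Proposition \ref{lg1}, since that proof used only the Lipschitz continuity of $f\circ A$ and never any structural property of $\|\cdot\|_0$.
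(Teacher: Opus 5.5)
Your proposal is correct and follows the paper's (omitted) argument: part (i) repeats the proof of Lemma~\ref{sp} on the $x$-block, which the $\Phi$-terms do not affect. Part (ii) transcribes the proof of Proposition~\ref{lg1} with $\lambda_0\|\cdot\|_0$ replaced by $\lambda_0\Phi$, relying only on (i), the feasible point $(\tilde x,y)$, the Lipschitz continuity of $f\circ A$, and Assumption~\ref{asp}.
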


Next, we investigate the relationship between problems \eqref{rp1} and \eqref{p1}. We assume that the relaxation parameter satisfies the following condition.

\begin{assumption}\label{asn}
	The relaxation parameter $\nu$ in \eqref{pht} satisfies $\nu<\frac{\lambda_0}{\lambda_p}$.
\end{assumption}

 We show that every critical point of \eqref{rp1} enjoys a lower bound property, i.e., the absolute value of each nonzero component is bounded away from zero.

\begin{lemma}\label{lw}
Under Assumption \ref{asn}, if $(x^*, y^*)$ is a critical point of problem \eqref{rp1}, then for each $i\in[m]$, either $y_i^*= 0$ or $|y_i^*|\geq\nu$.
\end{lemma}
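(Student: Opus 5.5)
We argue by contradiction. Suppose $(x^*,y^*)$ is a critical point of \eqref{rp1} and some index $i\in[m]$ satisfies $0<|y_i^*|<\nu$.

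First, we pin down the DC pieces at $y_i^*$. Since $|y_i^*|<\nu$, we have $\theta_2(y_i^*)=y_i^*/\nu-1<0$ and $\theta_3(y_i^*)=-y_i^*/\nu-1<0$, while $\theta_1(y_i^*)=0$. Hence $\mathcal D(y_i^*)=\{1\}$, and $\max_j\theta_j$ is locally identically zero near $y_i^*$. Its subdifferential in the $i$-th coordinate is therefore $\{0\}$. Because $y_i^*\neq 0$, we also have $\partial|y_i^*|=\{\operatorname{sign}(y_i^*)\}$. Both the concave part and $\Phi$ are separable, so the $i$-th component of the $y$-block of the critical point inclusion reads
\[
0\in \frac{\lambda_0}{\nu}\operatorname{sign}(y_i^*)+\lambda_p\,[\partial_y g(x^*,y^*)]_i .
\]

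Next, we bound the $y$-part of $\partial g$. By \eqref{w2}, every element of $\partial g(x^*,y^*)$ has $y$-block equal to $-\eta$ for some $\eta$ with $\|\eta\|\le 1$. This holds in both cases: if $y^*\neq Dx^*$, then $\eta=(Dx^*-y^*)/g(x^*,y^*)$ has unit norm, and if $y^*=Dx^*$, then $\eta=\xi$ with $\|\xi\|\le1$. We do not need Proposition~\ref{lg2}(i), and avoiding it keeps the lemma valid under Assumption~\ref{asn} alone. One subtlety is that $\partial g$ is a joint subdifferential, but we only use its $y$-block, which is legitimate here. Consequently $|[\partial_y g]_i|\le 1$, and the inclusion gives $\lambda_0/\nu=\lambda_p|\eta_i|\le\lambda_p$, that is, $\nu\ge\lambda_0/\lambda_p$. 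This contradicts Assumption~\ref{asn}.

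The only delicate point is making sure the $y$-block of the critical point inclusion decouples componentwise, which holds because both $\Phi$ and the concave part $\sum_i\max_j\theta_j(y_i)$ are separable in $y$. We also need the concave part to contribute exactly zero at coordinate $i$, which follows from $\mathcal D(y_i^*)=\{1\}$ together with the local smoothness of the max there. Everything else is routine.
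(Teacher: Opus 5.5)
Your proof is correct and follows the paper's argument: contradiction, $\mathcal{D}(y_i^*)=\{1\}$, reading off the $i$-th $y$-component of the inclusion, and bounding $\lambda_0/\nu\le\lambda_p$. The one difference is that you bound the $y$-block of $\partial g$ by $1$ in both cases of \eqref{w2}, whereas the paper first invokes Proposition~\ref{lg2}(i) to get $y^*=Dx^*$; that step tacitly requires Assumption~\ref{asp}, so your version is the one that genuinely needs only Assumption~\ref{asn}, as the statement claims.
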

\begin{proof}
We proceed by contradiction. Suppose that there exists $\hat i\in[m]$ such that $|y_{\hat{i}}^*|\in (0,\nu)$. 
By Proposition \ref{lg2}, we have $y^*=Dx^*$. 
Therefore, there exist $\zeta_i^*\in\mathrm{co}\{\theta'_j(y_i^*): j\in\mathcal{D}(y_i^*)\}$ for all $i\in[m]$ and $\xi^*\in\mathbb{R}^m$ with $\|\xi^*\|\leq 1$ such that 
$\mathbf{0}\in \frac{\lambda_0}{\nu}\sum_{i=1}^m\partial|y_i^*|e_i-\lambda_0\sum_{i=1}^m\zeta_i^*e_i-\lambda_p\xi^*.$
Since $|y_{\hat{i}}^*|\in (0,\nu)$, we have $\mathcal{D}(y_{\hat{i}}^*)=\{1\}$. Then,
$0= \frac{\lambda_0}{\nu}\mathrm{sign}(y_{\hat{i}}^*)-\lambda_p\xi^*_{\hat{i}},$
which implies that
$\frac{\lambda_0}{\nu}=\lambda_p\left|\xi^*_{\hat{i}}\right|
\leq \lambda_p\|\xi^*\|
\leq \lambda_p$.
Hence, $\frac{\lambda_0}{\lambda_p}\leq \nu$. This contradicts Assumption \ref{asn}.
\end{proof}

Based on the above lower bound property and using an argument similar to that in Theorem 2.4 of \cite{Bian2020a}, we can show that problems \eqref{rp1} and \eqref{p1} have the same global minimizers and optimal value. For brevity, the proof is omitted.

\begin{proposition}
Under Assumption \ref{asn}, a point $(x^*, y^*)$ is a global minimizer of \eqref{rp1} if and only if it is a global minimizer of \eqref{p1}. Moreover, the problems attain the same minimum at point $(x^*, y^*)$. 
\end{proposition}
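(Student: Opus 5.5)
The plan is to compare the two objectives
\[
F_0(x,y):=f(Ax)+\lambda_0\|y\|_0+\lambda_p g(x,y),\qquad F_\Phi(x,y):=f(Ax)+\lambda_0\Phi(y)+\lambda_p g(x,y),
\]
and to prove two facts. First, $\Phi(y)\le\|y\|_0$ for every $y$, so $F_\Phi\le F_0$ everywhere. Second, $\Phi(y)=\|y\|_0$ whenever each $y_i$ is either $0$ or satisfies $|y_i|\ge\nu$. The first fact is immediate from \eqref{pht}. The second holds because $\phi(t)=1$ when $|t|\ge\nu$ and $\phi(0)=0$. Both facts concern only the $y$-block, so they do not depend on the coupling through $D$.

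The main step is to show that the two optimal values coincide and that $F_\Phi$ has a global minimizer whose $y$-components obey the lower bound. One option is to take any global minimizer $(x^*,y^*)$ of \eqref{rp1}. It is a local minimizer, hence a weak $d$-stationary point, hence a critical point. Lemma~\ref{lw} then says each $y^*_i$ is $0$ or has $|y^*_i|\ge\nu$, so $F_\Phi(x^*,y^*)=F_0(x^*,y^*)$. This is the route I would use.

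The obstacle is that \eqref{rp1} need not obviously have a global minimizer, since $F_\Phi$ is not coercive in $x$. To handle this, I would follow Theorem 2.4 of \cite{Bian2020a} and give a constructive argument that does not need existence. Take an arbitrary $(x,y)$ and define $\bar y$ componentwise: $\bar y_i=0$ if $|y_i|<\nu$, and $\bar y_i=y_i$ otherwise. Then $\|\bar y\|_0=\Phi(\bar y)$. Moreover
\[
\lambda_0\Phi(\bar y)-\lambda_0\Phi(y)=-\frac{\lambda_0}{\nu}\sum_{|y_i|<\nu}|y_i|,
\qquad
\lambda_p g(x,\bar y)-\lambda_p g(x,y)\le \lambda_p\|\bar y-y\|\le \lambda_p\sum_{|y_i|<\nu}|y_i|.
\]
Adding these, Assumption~\ref{asn} ($\lambda_p<\lambda_0/\nu$) gives $F_0(x,\bar y)=F_\Phi(x,\bar y)\le F_\Phi(x,y)$, with strict inequality if some $y_i$ has $0<|y_i|<\nu$. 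Hence $\inf F_0\le\inf F_\Phi$, and together with $F_\Phi\le F_0$ the two infima are equal.

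The equivalence of minimizers then follows. Suppose $(x^*,y^*)$ globally minimizes \eqref{p1}; such a minimizer exists because \eqref{ob0} has one, via Proposition~\ref{lg1} and the equivalence between \eqref{ob0} and \eqref{cp2}. Then
\[
F_\Phi(x^*,y^*)\le F_0(x^*,y^*)=\inf F_0=\inf F_\Phi,
\]
so $(x^*,y^*)$ is a global minimizer of \eqref{rp1}. Conversely, suppose $(x^*,y^*)$ globally minimizes \eqref{rp1}. By the strict-inequality case above, or by Lemma~\ref{lw}, no $y^*_i$ lies in $(0,\nu)$. Hence
\[
F_0(x^*,y^*)=F_\Phi(x^*,y^*)=\inf F_\Phi=\inf F_0,
\]
so $(x^*,y^*)$ is a global minimizer of \eqref{p1}. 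Both directions also give equal objective values at $(x^*,y^*)$.

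The only delicate point is the truncation inequality. It uses the triangle inequality for $g$ in its second argument and the strict bound $\lambda_p\nu<\lambda_0$, so it does not rely on Assumption~\ref{asp}.
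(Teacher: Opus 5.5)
Your proof is correct. It takes a more self-contained route than the one the paper points to; the paper omits its proof and cites only the lower bound property of Lemma~\ref{lw} and the argument of Theorem~2.4 in \cite{Bian2020a}.

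Both routes share the same skeleton: $\Phi\le\|\cdot\|_0$, with equality on vectors whose entries are $0$ or have modulus at least $\nu$. The difference is how the lower bound is obtained.

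The paper's route gets it from first-order conditions. A global minimizer of \eqref{rp1} is a critical point, so Lemma~\ref{lw} applies. On its own, this only settles the direction from \eqref{rp1} to \eqref{p1}. For the converse you need the two optimal values to coincide, and the lower-bound argument gives that only once a global minimizer of \eqref{rp1} is known to exist. In addition, the paper's proof of Lemma~\ref{lw} goes through Proposition~\ref{lg2}(i), which is stated under Assumption~\ref{asp}. That step is avoidable, since the $y$-block of $\partial g$ is $-\xi$ with $\|\xi\|\le 1$ whether or not $y=Dx$.

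Your truncation $y\mapsto\bar y$ gives $F_0(x,\bar y)=F_\Phi(x,\bar y)\le F_\Phi(x,y)$ for every $(x,y)$. Hence $\inf F_0=\inf F_\Phi$ with no existence assumption. Its strict form gives the lower bound for global minimizers of \eqref{rp1} directly, using only $\lambda_p\nu<\lambda_0$. What the paper's route offers in exchange is reuse of the stationarity-based lower bound, which it needs anyway for Proposition~\ref{ro0} and the algorithmic analysis.

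One correction: delete the parenthetical claim that \eqref{p1} has a global minimizer ``via Proposition~\ref{lg1}''. It is not needed for an if-and-only-if statement. As justified, it also requires Assumption~\ref{asp}, which is not a hypothesis of this proposition.
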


To establish the relationship between the local minimizers of problems \eqref{rp1} and \eqref{p1}, we first show that the index $\hat{d}_i$ associated with a weak $d$-stationary point of problem \eqref{rp1} is uniquely determined.

\begin{lemma}\label{lp}
	Under Assumption \ref{asn}, if $(\hat{x}, \hat{y})$ is a weak $d$-stationary point of problem \eqref{rp1}, then 
	for all $i\in[m]$, the index $d^{\hat{y}}_i\in\mathcal{D}(\hat{y}_i)$ satisfying \eqref{lf} is uniquely determined and given by
	\begin{equation}\label{lifi}
	d^{\hat{y}}_i=\left\{
	\begin{aligned}
	&1,&\quad &\mbox{if}\ |\hat{y}_i|<\nu,& \\
	&2,&\quad &\mbox{if}\ \hat{y}_i \geq \nu,& \\
	&3,&\quad &\mbox{if}\ \hat{y}_i \leq -\nu.&
	\end{aligned}\right.
	\end{equation}
\end{lemma}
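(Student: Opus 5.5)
We will combine the lower bound property of Lemma~\ref{lw} with the structure of $\mathcal{D}(t)$. Since a weak $d$-stationary point is a critical point, Lemma~\ref{lw} gives, for each $i\in[m]$, either $\hat y_i=0$ or $|\hat y_i|\ge\nu$. By Proposition~\ref{lg2}(i), $\hat y=D\hat x$, so the last block of $\partial g(\hat x,\hat y)$ in \eqref{w2} is $-\xi$ with $\|\xi\|\le 1$. Writing out the $y$-block of \eqref{lf} componentwise therefore gives, for some $\eta_i\in\partial|\hat y_i|$,
\[
0=\frac{\lambda_0}{\nu}\eta_i-\lambda_0\theta_{\hat d_i}'(\hat y_i)-\lambda_p\xi_i,\qquad |\xi_i|\le 1 .
\]

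We then compute $\mathcal D(t)$ explicitly from $\theta_1=0$, $\theta_2=t/\nu-1$ and $\theta_3=-t/\nu-1$. We get $\mathcal D(t)=\{1\}$ if $|t|<\nu$, $\mathcal D(t)=\{2\}$ if $t>\nu$, $\mathcal D(t)=\{3\}$ if $t<-\nu$, $\mathcal D(\nu)=\{1,2\}$ and $\mathcal D(-\nu)=\{1,3\}$. Whenever $\mathcal D(\hat y_i)$ is a singleton there is nothing to prove, and the singleton matches \eqref{lifi}. This covers $\hat y_i=0$ and $|\hat y_i|>\nu$. Values with $0<|\hat y_i|<\nu$ do not occur, by Lemma~\ref{lw}.

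The only real work is the boundary case $|\hat y_i|=\nu$, which is the main obstacle. Take $\hat y_i=\nu$; the case $-\nu$ is symmetric. Then $\eta_i=1$. Suppose $\hat d_i=1$. Then $\theta_1'=0$, and the componentwise equation becomes $\lambda_0/\nu=\lambda_p\xi_i\le\lambda_p$, that is, $\nu\ge\lambda_0/\lambda_p$, which contradicts Assumption~\ref{asn}. Hence $\hat d_i=2$ is forced. As a consistency check, with $\hat d_i=2$ we have $\theta_2'=1/\nu$, so the equation reads $0=\lambda_0/\nu-\lambda_0/\nu-\lambda_p\xi_i$, i.e.\ $\xi_i=0$, which is admissible. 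For $\hat y_i=-\nu$ we have $\eta_i=-1$, and $\hat d_i=1$ gives $\lambda_0/\nu=\lambda_p|\xi_i|\le\lambda_p$, the same contradiction. So $\hat d_i=3$.

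Collecting the cases, any index vector satisfying \eqref{lf} must coincide with \eqref{lifi}, and uniqueness follows. Since $|\hat y_i|<\nu$ forces $\hat y_i=0$, the first line of \eqref{lifi} is consistent with the singleton case above.
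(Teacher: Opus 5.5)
Your proof is correct and follows essentially the same route as the paper. The singleton cases of $\mathcal D(\hat y_i)$ are immediate, and at $\hat y_i=\pm\nu$ the choice $\hat d_i=1$ forces $\lambda_0/\nu=\lambda_p|\xi_i|\le\lambda_p$, which contradicts Assumption~\ref{asn}.

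One simplification is available. You do not need Lemma~\ref{lw}, because $\mathcal D(\hat y_i)=\{1\}$ for every $|\hat y_i|<\nu$. You can also drop Proposition~\ref{lg2}(i), which tacitly needs Assumption~\ref{asp}: by \eqref{w2}, the $y$-block of every element of $\partial g(\hat x,\hat y)$ has the form $-\xi$ with $\|\xi\|\le 1$, whether or not $\hat y=D\hat x$.
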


\begin{proof}
By the definition of $\mathcal{D}(\hat{y}_i)$, \eqref{lifi} is immediate when $|\hat{y}_i|\neq \nu$. It therefore suffices to consider the case $\hat{y}_i=\nu$, since the case $\hat{y}_i=-\nu$ is analogous.
By (i) of Proposition \ref{lg2}, we have $\hat{y}=D\hat{x}$. Then there exists $\hat{\xi}\in\mathbb{R}^m$ with $\|\hat{\xi}\|\leq 1$ such that
\begin{equation}\label{sp1}
\mathbf{0}\in 
\begin{pmatrix}
A^{\top}\partial f(A\hat{x}) \\
\frac{\lambda_0}{\nu}\sum_{i=1}^m\partial|\hat{y}_i|e_i-\lambda_0\sum_{i=1}^m\theta_{d^{\hat{y}}_i}'(\hat{y}_i)e_i
\end{pmatrix}
+\lambda_p\begin{pmatrix}
D^{\top}\hat{\xi} \\
-\hat{\xi}
\end{pmatrix}.
\end{equation}
Since $\hat{y}_i=\nu$, it follows that $\mathcal{D}(\hat{y}_i)=\{1, 2\}$. We prove by contradiction that $d^{\hat{y}}_i\neq 1$. Suppose that \eqref{sp1} holds with $d^{\hat{y}}_i=1$. By the corresponding $i$-th component of \eqref{sp1}, we have
$0= \frac{\lambda_0}{\nu}-\lambda_p\hat{\xi}_i.$
Consequently, $\frac{\lambda_0}{\nu}=\lambda_p\hat{\xi}_i\leq \lambda_p\| \hat{\xi}\|\leq \lambda_p$. Thus $\frac{\lambda_0}{\lambda_p}\leq \nu,$
which contradicts the choice of $\nu$. Hence, $d^{\hat{y}}_i=2$. 
\end{proof}

\begin{proposition}\label{ro0}
Under Assumptions \ref{asp} and \ref{asn}, $(\hat{x}, \hat{y})$ is a local minimizer of \eqref{p1} satisfying $|\hat{y}_i|\notin(0,\nu)$ for all $i\in[m]$ if and only if it is a weak $d$-stationary point of \eqref{rp1}, and two problems have the same objective function value at $(\hat{x}, \hat{y})$.
\end{proposition}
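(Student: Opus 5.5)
The argument compares the optimality characterization of \eqref{p1} in Remark~\ref{ro3}, i.e.\ inclusion \eqref{w1}, with the weak $d$-stationarity condition \eqref{lf}. The bridge is Lemma~\ref{lp}, which fixes the index $d^{\hat y}$ uniquely. Throughout, let $\mathcal{I}=\mathcal{I}(\hat y)$. When $|\hat y_i|\notin(0,\nu)$, each index is either in $\mathcal I$ (where $\hat y_i=0$, $d^{\hat y}_i=1$, $\theta_1'=0$) or satisfies $|\hat y_i|\ge\nu$ (where $d^{\hat y}_i\in\{2,3\}$). In the latter case $\frac{\lambda_0}{\nu}\partial|\hat y_i|-\lambda_0\theta'_{d_i}(\hat y_i)=\frac{\lambda_0}{\nu}\mathrm{sign}(\hat y_i)-\frac{\lambda_0}{\nu}\mathrm{sign}(\hat y_i)=0$. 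Hence, with this choice of $d$, the $y$-block of the first term in \eqref{lf} is the set
\[
\Bigl\{z\in\mathbb{R}^m:\ z_i\in\tfrac{\lambda_0}{\nu}[-1,1]\ \text{for } i\in\mathcal I,\ \ z_i=0\ \text{for } i\notin\mathcal I\Bigr\}.
\]
This is exactly \eqref{w1}, except that the normal cone entries $z_i\in\mathbb{R}$ on $\mathcal I$ are replaced by the bounded interval $\frac{\lambda_0}{\nu}[-1,1]$.

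For ``$\Leftarrow$'', let $(\hat x,\hat y)$ be weak $d$-stationary for \eqref{rp1}. Lemma~\ref{lw} applies because weak $d$-stationary points are critical, so $|\hat y_i|\notin(0,\nu)$. By Lemma~\ref{lp} the index in \eqref{lf} is $d^{\hat y}$. The $y$-block of \eqref{lf} then reduces to the bounded-interval set above. Since $\frac{\lambda_0}{\nu}[-1,1]\subseteq\mathbb{R}=(\mathcal N_{\mathbb R^m_s}(\hat y))_i$ for $i\in\mathcal I$, inclusion \eqref{w1} holds with the same $\partial f$ and $\partial g$ elements. Remark~\ref{ro3} then shows $(\hat x,\hat y)$ is a local minimizer of \eqref{p1}.

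For ``$\Rightarrow$'', Remark~\ref{ro3} and Lemma~\ref{sp} give $\hat y=D\hat x$, together with $\eta\in\partial f(A\hat x)$, $\|\xi\|\le1$ and $z\in\mathcal N_{\mathbb R^m_s}(\hat y)$ such that
\[
A^\top\eta+\lambda_pD^\top\xi=0,\qquad -\lambda_p\xi+z=0.
\]
So $z=\lambda_p\xi$, and for $i\in\mathcal I$ we get $|z_i|\le\lambda_p\|\xi\|\le\lambda_p<\lambda_0/\nu$ by Assumption~\ref{asn}. Thus $z_i\in\frac{\lambda_0}{\nu}\partial|0|$. For $i\notin\mathcal I$, $z_i=0$, which matches the zero contribution computed above because $|\hat y_i|\ge\nu$ by hypothesis. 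Hence \eqref{lf} holds with $d=d^{\hat y}$, and each $d^{\hat y}_i\in\mathcal D(\hat y_i)$, including the boundary cases $|\hat y_i|=\nu$. The main point of care is this step: the unbounded normal cone must be shown to collapse to the bounded subdifferential via $\|\xi\|\le1$, and the boundary index at $|\hat y_i|=\nu$ must be chosen as $2$ or $3$.

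Finally, the objective values agree. At such points $\hat y=D\hat x$, and every component has either $\hat y_i=0$ or $|\hat y_i|\ge\nu$, so $\phi(\hat y_i)=|\hat y_i|_0$ for each $i$. Therefore $\Phi(\hat y)=\|\hat y\|_0$, and the two objectives coincide at $(\hat x,\hat y)$.
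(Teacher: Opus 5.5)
Your proof is correct. The ``$\Rightarrow$'' half is essentially the paper's argument. Both extract $\hat\xi$ with $\|\hat\xi\|\le 1$ and $\lambda_p\hat\xi\in\mathcal{N}_{\mathbb{R}_s^m}(\hat y)$ from \eqref{w1}. Both use $\lambda_p|\hat\xi_i|<\lambda_0/\nu$ to place the entries on $\mathcal{I}(\hat y)$ inside $\frac{\lambda_0}{\nu}\partial|0|$, and both pick $d_i\in\{2,3\}$ where $|\hat y_i|\ge\nu$.

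For ``$\Leftarrow$'' you take a genuinely different route. You translate \eqref{lf}, with the index forced to be $d^{\hat y}$ by Lemma~\ref{lp}, back into \eqref{w1} via $\frac{\lambda_0}{\nu}[-1,1]\subseteq\mathbb{R}$, and then invoke the sufficiency half of Remark~\ref{ro3}. The paper instead notes that \eqref{lf} makes $(\hat x,\hat y)$ a global minimizer of the convex function $f(Ax)+\lambda_0\Phi^{\hat d}(y)+\lambda_p g(x,y)$. It then shows $\Phi^{\hat d}(y)\le\|y\|_0$ near $\hat y$, with equality at $\hat y$. So this convex function is a local minorant of the \eqref{p1} objective that touches it at $(\hat x,\hat y)$.

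Your version is shorter and symmetric. It presents the result as an equivalence of two first-order systems that differ only in bounded versus unbounded multipliers on $\mathcal{I}(\hat y)$, and $\|\xi\|\le 1<\lambda_0/(\lambda_p\nu)$ reconciles them in both directions. The price is that it relies on the sufficiency direction of Remark~\ref{ro3}, which the paper states without proof. That direction is true, but a complete write-up should include its short argument:
\begin{itemize}
\item near $\hat y$, leaving $\mathbb{R}_s^m$ raises $\|y\|_0$ by at least one, and that jump of $\lambda_0$ dominates the continuous change in $f(Ax)+\lambda_p g(x,y)$;
\item inside $\mathbb{R}_s^m$ the count $\|y\|_0$ is locally constant, so convexity of $f\circ A+\lambda_p g$ plus the convex sum rule (valid since both are finite) finish the job.
\end{itemize}

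The paper's route is self-contained. It also records that $(\hat x,\hat y)$ globally minimizes the fixed-index convex surrogate. That is exactly the type of problem (cf.\ \eqref{convex_pb} and \eqref{min_F}) that SAP-ADMM ends up solving once $d^k$ stabilizes.
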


\begin{proof}
Assume that $(\hat{x}, \hat{y})$ is a local minimizer of \eqref{p1} satisfying $|\hat{y}_i|\notin(0,\nu)$ for all $i\in[m]$. 
By \eqref{w1} and \eqref{w2}, there exists $\hat{\xi}\in\mathbb{R}^m$ with $\|\hat{\xi}\|\leq 1$ such that
\begin{equation}\label{sa}
\mathbf{0}\in A^{\top}\partial f(A\hat{x})+\lambda_pD^{\top}\hat{\xi}\quad\mbox{and}\quad \lambda_p\hat{\xi}\in \mathcal{N}_{\mathbb{R}_s^m} (\hat{y}),
\end{equation}
where $\mathbb{R}_s^m$ is defined as in \eqref{w1}.
We next show that there exists $d^{\hat{y}}\in\mathcal{D}^{\hat{y}}$ such that \eqref{sp1} holds for such $\hat{\xi}$. To this end, we construct $d^{\hat{y}}$ elementwise.
Since $|\hat{\xi}_i|\leq \|\hat{\xi}\|\leq 1<\frac{\lambda_0}{\lambda_p\nu}$, we have $\lambda_p|\hat{\xi}_i|<\frac{\lambda_0}{\nu}$, and hence $0\in \frac{\lambda_0}{\nu}\partial |0|-\lambda_p\hat{\xi}_i$. Thus, when $\hat{y}_i=0$, we set $d^{\hat{y}}_i=1$. Then $d^{\hat{y}}_i\in\mathcal{D}(\hat{y}_i)$ and $0\in \frac{\lambda_0}{\nu}\partial |\hat{y}_i|
-\lambda_0\theta'_{d^{\hat{y}}_i}(\hat{y}_i)
-\lambda_p\hat{\xi}_i$. When $|\hat{y}_i|\geq\nu$, it holds that $\hat{\xi}_i=0$. 
Then we set $d^{\hat{y}}_i=2$ if $\hat{y}_i\geq\nu$ and $d^{\hat{y}}_i=3$ if $\hat{y}_i\leq-\nu$. In either case, $d^{\hat{y}}_i\in\mathcal{D}(\hat{y}_i)$ and $0\in \frac{\lambda_0}{\nu}\mbox{sign}(\hat{y}_i)-\lambda_0\theta'_{d^{\hat{y}}_i}(\hat{y}_i)-\lambda_p\hat{\xi}_i$.
Consequently, defining $d^{\hat{y}}=(d^{\hat{y}}_1,\cdots,d^{\hat{y}}_m)^{\mathrm{T}}$, we have $d^{\hat{y}}\in\mathcal{D}^{\hat{y}}$ and \eqref{sp1} holds for $d^{\hat{y}}$ and $\hat{\xi}$.
This implies that $(\hat{x}, \hat{y})$ is a weak $d$-stationary point of \eqref{rp1}.

On the other hand, if $(\hat{x}, \hat{y})$ is a weak $d$-stationary point of \eqref{rp1}, $\hat{y}$ satisfies the lower bound property, and there exists $\hat{d}\in\mathcal{D}^{\hat{y}}$ such that \eqref{lf} holds. By the convexity of all functions involved in \eqref{lf}, it follows that
 \begin{equation}\label{sp3}
(\hat{x}, \hat{y})\in \arg\min_{x\in\mathbb{R}^n, y\in\mathbb{R}^m} f(Ax)+\lambda_0\Phi^{\hat{d}}\left(y\right)+\lambda_p g(x, y).
 \end{equation}
By item (i) of Proposition \ref{lg2} and Lemma \ref{lw}, we have 
\begin{equation}\label{t1}
\hat{y}=D\hat{x}\quad\mbox{and}\quad\Phi(\hat{y})=\|\hat{y}\|_0.
\end{equation}
Combining \eqref{sp3},  $\Phi(\hat{y})=\Phi^{\hat{d}}(\hat{y})$ and \eqref{t1}, we obtain for all $ x\in\mathbb{R}^n$ and $y\in\mathbb{R}^m$,
\begin{equation}\label{t2}
\begin{aligned}
f(Ax)+\lambda_0\Phi^{\hat{d}}(y)+\lambda_p g(x, y)
&\geq f(A\hat{x})+\lambda_0\Phi^{\hat{d}}(\hat{y})+\lambda_p g(\hat{x}, \hat{y}) \\
&=f(A\hat{x})+\lambda_0\Phi(\hat{y})+\lambda_p g(\hat{x}, \hat{y}) \\
&=f(A\hat{x})+\lambda_0\|\hat{y}\|_0
+\lambda_p g(\hat{x}, \hat{y}).
\end{aligned}
\end{equation}
Define $\hat{I}=\{i\in [m]: |\hat{y}_i|\geq\nu\}$. For $j\notin \hat{I}$, we have $\hat{d}_j=1$. Hence, for any $y\in\mathbb{R}^m$, 
$\Phi^{\hat{d}}(y)
=\sum_{j\in \hat{I}}\left(\frac{|y_j|}{\nu}-\theta_{\hat{d}_j}(y_j)\right)+\sum_{j\notin \hat{I}}\frac{|y_j|}{\nu}.$
By \eqref{lifi}, it follows that $\hat{d}_j=2$ or $3$ for $j\in \hat{I}$. Thus there exists a $\delta>0$ such that
\begin{equation}\label{t3}
\begin{aligned}
\Phi^{\hat{d}}(y)
=&\sum_{j\in \hat{I}}1+\sum_{j\notin \hat{I}}\frac{|y_j|}{\nu}=\|\hat{y}\|_0+\sum_{j\notin \hat{I}}\frac{|y_j|}{\nu}\leq \|y\|_0,\quad y\in B(\hat{y},\delta),
\end{aligned}
\end{equation}
where the second equality uses Lemma \ref{lw}. 
For all $x\in B(\hat{x},\delta)$ and $y\in B(\hat{y},\delta)$, it follows from \eqref{t2} and \eqref{t3} that
$f(A\hat{x})+\lambda_0\|\hat{y}\|_0+\lambda_p g(\hat{x}, \hat{y})
\leq f(Ax)+\lambda_0\|y\|_0+\lambda_p g(x,y).$
Therefore, $(\hat{x},\hat{y})$ is a local minimizer of \eqref{p1}.

Based on the above analysis and \eqref{t1}, it is clear that the objective function values of the two problems coincide at the point $(\hat{x}, \hat{y})$.
\end{proof}

In view of Proposition \ref{ro0}, every local minimizer of \eqref{rp1} is also a local minimizer of \eqref{p1}. Moreover, Proposition \ref{ro0} together with the equivalence between the local minimizers of \eqref{p1} and \eqref{ob0} motivates the following definition of a strong local minimizer for problem \eqref{ob0}.

\begin{definition}
We call $\hat{x}$ a $\nu$-strong local minimizer of problem \eqref{ob0} if $\hat{x}$ is a local minimizer of problem \eqref{ob0} and $\hat{y}:=D\hat{x}$ satisfies $|\hat{y}_i|\notin(0,\nu)$ for all $i\in[m]$.
\end{definition}

By Propositions \ref{lg1} and \ref{ro0}, we directly obtain the following equivalence between $\nu$-strong local minimizers of \eqref{ob0} and weak $d$-stationary points of \eqref{rp1}.

\begin{corollary}
	Under Assumptions \ref{asp} and \ref{asn}, $\hat{x}$ is a $\nu$-strong local minimizer of \eqref{ob0} if and only if $(\hat{x}, D\hat{x})$ is a weak $d$-stationary point of \eqref{rp1}.
\end{corollary}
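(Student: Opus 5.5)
The plan is to chain the earlier equivalences through the intermediate problem \eqref{p1}. First I will pass from \eqref{ob0} to \eqref{cp2}. For this I need the elementary fact that $\hat{x}$ is a local minimizer of \eqref{ob0} if and only if $(\hat{x},D\hat{x})$ is a local minimizer of \eqref{cp2}.

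This fact holds because the feasible set of \eqref{cp2} is the graph $\{(x,Dx)\}$ and the objectives agree on it. The two neighborhoods also correspond.
\begin{itemize}
\item A ball $B(\hat{x},\delta')$ with $\delta'\le\delta/\max\{1,\|D\|\}$ maps into $B(\hat{x},\delta)\times B(D\hat{x},\delta)$.
\item Conversely, a ball in $x$ of radius $\delta$ around $\hat{x}$ contains the $x$-projection of any feasible pair within distance $\delta$.
\end{itemize}
Hence local optimality transfers in both directions, as in the restriction argument in the ``$\Rightarrow$'' part of Proposition \ref{lg1}.

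Second, I will invoke Proposition \ref{lg1}(i) under Assumption \ref{asp}. It gives that $(\hat{x},D\hat{x})$ is a local minimizer of \eqref{cp2} if and only if it is a local minimizer of \eqref{p1}. Combining the two steps, $\hat{x}$ is a local minimizer of \eqref{ob0} exactly when $(\hat{x},D\hat{x})$ is a local minimizer of \eqref{p1}. Since $\hat{y}:=D\hat{x}$ in both settings, the extra condition $|\hat{y}_i|\notin(0,\nu)$ appearing in the definition of a $\nu$-strong local minimizer is literally the same as the condition in Proposition \ref{ro0}. So $\hat{x}$ is a $\nu$-strong local minimizer of \eqref{ob0} if and only if $(\hat{x},D\hat{x})$ is a local minimizer of \eqref{p1} with $|\hat{y}_i|\notin(0,\nu)$.

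Third, Proposition \ref{ro0}, under Assumptions \ref{asp} and \ref{asn}, converts this to $(\hat{x},D\hat{x})$ being a weak $d$-stationary point of \eqref{rp1}.

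For the reverse implication, I start from a weak $d$-stationary point of the form $(\hat{x},D\hat{x})$. Proposition \ref{ro0} gives that it is a local minimizer of \eqref{p1} satisfying the lower bound condition, and the chain above then runs backwards. The only step needing care is the first one, the neighborhood bookkeeping between \eqref{ob0} and \eqref{cp2}. It is routine, but I will write it explicitly, since the paper treats the equivalence of local minimizers of \eqref{ob0} and \eqref{cp2} only informally.
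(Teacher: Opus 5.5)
Your proposal is correct and follows the paper's own route. The paper obtains the corollary directly by chaining Proposition \ref{lg1}(i) with Proposition \ref{ro0}, tacitly using the equivalence between local minimizers of \eqref{ob0} and \eqref{cp2}; your explicit neighborhood bookkeeping for that first step simply fills in the detail the paper leaves unstated.
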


\section{A safeguarded accelerated proximal ADMM}
\label{sec:algorithm}

In this section, we develop an ADMM-type method to find a weak $d$-stationary point of problem \eqref{rp1}. Define the stacked variable
\(
u=(x^{\top},y^{\top})^{\top}
\)
and the linear operators
\[
    B_1=(A,0),\qquad
    B_2=(D,-I),\qquad
    B_3=(0,I).
\]
Then problem~\eqref{rp1} can be equivalently reformulated as
\begin{equation}
    \begin{aligned}
    \min_{p,q,u}\quad & f(p)+\lambda_p\|q\|+\lambda_0\Phi(B_3u)\\
    \mathrm{s.t.}\quad & B_1u-p=\mathbf{0},\quad B_2u-q=\mathbf{0}.
    \end{aligned}
    \label{eq:constrained_split_model}
\end{equation}
For any \(\rho>0\) and $d\in\mathbb{D}^m$, motivated by the augmented Lagrangian of \eqref{eq:constrained_split_model} and the convexity of $\Phi^d$, we define $\mathcal L_\rho^d:\mathbb{R}^r\times \mathbb{R}^m\times \mathbb{R}^{n+m} \times \mathbb{R}^{r}\times \mathbb{R}^{m}\to \mathbb{R}\cup\{+\infty\}$ by
\begin{equation*}
\begin{aligned}
    \mathcal L_\rho^d(p,q,u;\eta,\mu)
    &=f(p)+\lambda_p\|q\|+\lambda_0\Phi^d(B_3u)
    +\langle\eta,B_1u-p\rangle+\langle\mu,B_2u-q\rangle\\
    &\quad+\rho\|B_1u-p\|^2/2+\rho\|B_2u-q\|^2/2,
\end{aligned}
\label{eq:aug_lagrangian}
\end{equation*}
which serves as the basis for constructing the subproblems of the proposed algorithm. For simplicity, let \(w:=(p,q,u,\eta,\mu)\) and $\mathbb{W}:=\mathbb{R}^r\times \mathbb{R}^m\times \mathbb{R}^{n+m} \times \mathbb{R}^{r}\times \mathbb{R}^{m}$. Inspired by the acceleration scheme used in \cite{Sun2025Ac},
we develop a safeguarding strategy to determine whether
$w^{k+1}$ is updated by \eqref{ac} or \eqref{nac}. Incorporating this strategy into the ADMM framework and proximal methods yields
Algorithm~\ref{alg:prox_admm} to solve problem
\eqref{eq:constrained_split_model}, which is named
the \emph{Safeguarded Accelerated Proximal ADMM} (SAP-ADMM).

\begin{algorithm}[htbp]
\caption{Safeguarded Accelerated Proximal ADMM (SAP-ADMM)}
\label{alg:prox_admm}
\begin{algorithmic}
\State \textbf{Input:} Choose $\alpha\geq 2$, $t\in(0,2]$, $N_{\mathrm{max}}\in\mathbb{Z}_+$, and $\rho,\beta>0$ satisfying $\beta>\rho$ and
$\beta I-\rho A^\top A-\rho\beta D^\top D/(\beta-\rho)\succ0$. Initialize $w^0=(p^0,q^0,u^0,\eta^0,\mu^0)\in\mathbb{W}$ with
$u^0:=((x^0)^\top,(y^0)^\top)^\top$,
$x^0\in\mathbb{R}^n$, $y^0\in\mathbb{R}^m$ and
$\|\mu^0\|<\lambda_p$.

Set $d^0:=d^{y^0}$ according to~\eqref{lifi}, $\hat w^0=w^0$, $N_{\mathrm{fail}}=0$, $\mathtt{acc}=1$, and $k=0$.

\While{a stopping criterion is not satisfied}
\State \textbf{Step 1.} Compute
{\setlength{\abovedisplayskip}{2pt}
 \setlength{\belowdisplayskip}{2pt}
\begin{align}
    \bar p^k
    &=\operatorname{prox}_{f/\rho}(Ax^k+\eta^k/\rho),
    \label{eq:p_update}\\
    \bar\eta^k
    &=\eta^k+\rho(Ax^k-\bar p^k),
    \label{eq:eta_update}\\
    \bar q^k
    &=\operatorname{prox}_{\lambda_p\|\cdot\|/\rho}
    (Dx^k-y^k+\mu^k/\rho),
    \label{eq:q_update}\\
    \bar\mu^k
    &=\mu^k+\rho(Dx^k-y^k-\bar q^k).
    \label{eq:mu_update}
\end{align}
}
    \State \textbf{Step 2.} Update $\zeta^k=\rho(Dx^k-y^k-\bar q^k)+\bar\mu^k$ and
    {\setlength{\abovedisplayskip}{2pt}
 \setlength{\belowdisplayskip}{2pt}
   \begin{equation*}
   \begin{aligned}
    \bar x^k
    &=x^k-\left(
    A^\top\big(\rho(Ax^k-\bar p^k)+\bar\eta^k\big)
    +D^\top\zeta^k
    \right)/\beta,\\
    \bar y^k
    &=\operatorname{prox}_{\lambda_0\Phi^{d^k}/\beta}
    \left(y^k+\zeta^k/\beta\right).
    \end{aligned}
    \end{equation*}
    }

    \State \textbf{Step 3.} Update $d^{k+1}:=d^{\bar y^k}$ by~\eqref{lifi}. Set
{\setlength{\abovedisplayskip}{2pt}
\setlength{\belowdisplayskip}{2pt}
 \begin{equation*}
 \bar u^k=\big((\bar x^k)^\top,(\bar y^k)^\top\big)^\top,\qquad
 \bar w^k=(\bar p^k,\bar q^k,\bar u^k,\bar\eta^k,\bar\mu^k).
 \end{equation*}
}

    \State \textbf{Step 4.} Perform the safeguarded acceleration:
    \IndentedIf{$(\alpha,t)=(2,1)$ or $\mathtt{acc}=0$}
        compute
        \AlgorithmEquation{nac}{%
        w^{k+1}=\frac{1}{k+2}w^0+\frac{k+1}{k+2}\bar w^k;
        }
        \State let $k\leftarrow k+1$ and return to \textbf{Step~1}.
    \Else\ compute {$\hat w^{k+1}=(1-t)w^k+t\bar w^k$} and
        \AlgorithmEquation{ac}{%
        w^{k+1}=w^k+\frac{\alpha}{2(k+\alpha)}(\hat w^{k+1}-w^k)
        +\frac{k}{k+\alpha}(\hat w^{k+1}-\hat w^k).
        }
        \State Repeat \textbf{Steps~1--3} with $k$ replaced by $k+1$
        to obtain $\bar w^{k+1}$ and $d^{k+2}$.
        \If{$d^{k+2}\neq d^{k+1}$}
            \State $N_{\mathrm{fail}}\leftarrow N_{\mathrm{fail}}+1$, $\hat w^{k+1}=\bar w^k$; if $N_{\mathrm{fail}}\geq N_{\mathrm{max}}$, set $\mathtt{acc}=0$.
            \State Compute \eqref{nac} to obtain $w^{k+1}$; $k\leftarrow k+1$ and return to \textbf{Step~1}.
        \Else
            \State $k\leftarrow k+1$ and return to \textbf{Step~4}.
        \EndIf
    \EndIf
\EndWhileAfterIndent
\end{algorithmic}
\end{algorithm}

Compared with the classical ADMM, Algorithm~\ref{alg:prox_admm} updates $\bar y^k$ by replacing the nonconvex function $\Phi$ with its convex surrogate $\Phi^{d^k}$, where $d^k$ is determined by $\bar y^{k-1}$. As a result, the
$\bar y^k$-subproblem admits a closed-form solution (see
Remark~\ref{rem:y_prox}). Step 4 implements a safeguarded acceleration strategy. At iteration $k$, $w^{k+1}$ is updated by \eqref{ac} if
$d^{k+2}=d^{k+1}$; otherwise, it is updated by \eqref{nac}
and $N_{\mathrm{fail}}$ increases by one.
Since $N_{\mathrm{fail}}\leq N_{\mathrm{max}}$, there exists $K\in\mathbb{Z}_+$ such that $w^{k+1}$ is updated by
\eqref{ac} or \eqref{nac}
for all $k\geq K$.

\begin{remark}\label{rem:y_prox}
The proximal operator arising in the $\bar y^k$-subproblem has a closed-form expression. Specifically, define $\tau^k=y^k+\frac{1}{\beta}\zeta^k$, and for each
\(j\in[m]\), \(\bar y_j^k\) can be computed by
\begin{equation}\label{prox_y_1}
\begin{aligned}
\bar y_j^k=
\begin{cases}
0, & |\bar\tau_j^k|\le \lambda_0/(\beta\nu),\\
\bar\tau_j^k-\lambda_0/(\beta\nu), & \bar\tau_j^k>\lambda_0/(\beta\nu),\\
\bar\tau_j^k+\lambda_0/(\beta\nu), & \bar\tau_j^k<-\lambda_0/(\beta\nu),
\end{cases}
\text{where }\ 
\bar\tau_j^k=
\begin{cases}
\tau_j^k, & d_j^k=1,\\
\tau_j^k+\lambda_0/(\beta\nu), & d_j^k=2,\\
\tau_j^k-\lambda_0/(\beta\nu), & d_j^k=3.
\end{cases}
\end{aligned}
\end{equation}

\end{remark}

To analyze the convergence of Algorithm~\ref{alg:prox_admm}, we impose the following assumption throughout the remainder of the paper. Clearly, Assumption~\ref{asn_1} implies Assumption~\ref{asn}. Let \(\{w^k\}\), \(\{\bar w^k\}\) and \(\{\hat w^k\}\) be the sequences generated by Algorithm~\ref{alg:prox_admm}. The next lemma establishes that $\{d^k\}$ changes only finitely many times.

\begin{assumption}\label{asn_1}
	The relaxation parameter $\nu$ in \eqref{pht} satisfies $\nu<\frac{\lambda_0}{3\lambda_p}$.
\end{assumption}

\begin{lemma}\label{unchanged}
Under Assumption~\ref{asn_1}, there exists $\hat K\in\mathbb{Z}_+$ such that, for every $k>\hat K$, the following hold:
\begin{enumerate}
    \item[\rm(i)] if \(w^k\) is updated by \eqref{nac}, then, for any \(j\in [m]\), \(\bar y_j^{k-1}=0\) implies \(\bar y_j^k=0\), while \(0<|\bar y_j^{k-1}|<\nu\) implies either \(\bar y_j^k=0\) or \(|\bar y_j^k|\leq|\bar y_j^{k-1}|-(\lambda_0/(2\nu\beta)-3\lambda_p/(2\beta))\);
 \item[\rm(ii)] there exists a vector \(\bar d\in\mathbb D^m\) such that \(d^k=\bar d\).
\end{enumerate}
\end{lemma}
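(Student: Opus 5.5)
The plan is to control the dual-type quantity $\zeta^k$ uniformly, and then read off the behaviour of the soft-thresholding formula \eqref{prox_y_1} componentwise.

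\textbf{Step 1 (bound on $\zeta^k$).} By \eqref{eq:q_update}--\eqref{eq:mu_update}, $\bar\mu^k=\rho(v^k-\operatorname{prox}_{\lambda_p\|\cdot\|/\rho}(v^k))$ with $v^k=Dx^k-y^k+\mu^k/\rho$. By the Moreau decomposition, $\bar\mu^k$ is the projection of $\rho v^k$ onto the ball of radius $\lambda_p$, so $\|\bar\mu^k\|\le\lambda_p$ for every $k$. Moreover $\zeta^k=2\bar\mu^k-\mu^k$.

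When $w^k$ is produced by \eqref{nac}, $\mu^k$ is a convex combination of $\mu^0$ and $\bar\mu^{k-1}$. Since $\|\mu^0\|<\lambda_p$ by the input condition, this gives $\|\mu^k\|\le\lambda_p$ and hence $\|\zeta^k\|\le 3\lambda_p$. Likewise $y^k=\frac{1}{k+1}y^0+\frac{k}{k+1}\bar y^{k-1}$, so $|y^k_j-\bar y^{k-1}_j|\le (|y^0_j|+|\bar y^{k-1}_j|)/(k+1)$. I expect to need a bound on $\{\bar y^k\}$ here. A crude one should follow from the prox being nonexpansive and the bound on $\zeta$; if that fails, it is enough to work only with components satisfying $|\bar y^{k-1}_j|<\nu$, where the bound is automatic.

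\textbf{Step 2 (part (i)).} Recall $d^k=d^{\bar y^{k-1}}$, so $|\bar y^{k-1}_j|<\nu$ forces $d^k_j=1$ and $\bar\tau^k_j=\tau^k_j=y^k_j+\zeta^k_j/\beta$. Set $\gamma:=\lambda_0/(\nu\beta)-3\lambda_p/\beta$; Assumption~\ref{asn_1} gives $\gamma>0$. Choose $\hat K$ so large that $(|y^0_j|+\nu)/(k+1)\le\gamma/2$ for all $j$ and all $k>\hat K$.

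If $\bar y^{k-1}_j=0$, then $|\tau^k_j|\le\gamma/2+3\lambda_p/\beta<\lambda_0/(\beta\nu)$, and \eqref{prox_y_1} gives $\bar y^k_j=0$. If $0<|\bar y^{k-1}_j|<\nu$, either $\bar y^k_j=0$, or $|\bar y^k_j|=|\tau^k_j|-\lambda_0/(\beta\nu)\le|\bar y^{k-1}_j|+\gamma/2+3\lambda_p/\beta-\lambda_0/(\beta\nu)=|\bar y^{k-1}_j|-\gamma/2$. This is exactly the claimed decrease $\lambda_0/(2\nu\beta)-3\lambda_p/(2\beta)$.

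\textbf{Step 3 (part (ii)).} Since $N_{\mathrm{fail}}$ can increase at most $N_{\max}$ times, after some index only two regimes are possible. In the first, $\mathtt{acc}=1$ forever and no further failure occurs; by the test in Step 4 this means $d^{k+2}=d^{k+1}$ for all large $k$, so $d^k$ is eventually constant. In the second, $\mathtt{acc}=0$ eventually, or $(\alpha,t)=(2,1)$, and every update is \eqref{nac}; part (i) then applies at every step.

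In the second regime, by (i) the set $S_k=\{j:|\bar y^{k-1}_j|<\nu\}=\{j:d^k_j=1\}$ is nondecreasing in $k$, so it stabilizes. The remaining issue is a possible flip between $d_j=2$ and $d_j=3$ for an index with $|\bar y_j|\ge\nu$ throughout. Take $\bar y^{k-1}_j\ge\nu$, so $d^k_j=2$. Reaching $\bar y^k_j\le-\nu$ would require $\tau^k_j<-2\lambda_0/(\beta\nu)$. But $\tau^k_j\ge\nu-O(1/k)-3\lambda_p/\beta$, which is impossible for large $k$; enlarging $\hat K$ if necessary covers this. The case $\bar y^{k-1}_j\le-\nu$ is symmetric. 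Hence $d^k$ is eventually constant, equal to some $\bar d\in\mathbb D^m$.

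\textbf{Main obstacle.} I expect the delicate point to be the uniform-in-$k$ control of $y^k-\bar y^{k-1}$ and of $\mu^k$. This needs boundedness of the iterates, or at least of the relevant components, and it only holds in the \eqref{nac} regime. That is why (ii) is argued by splitting into the two eventual regimes, rather than by applying (i) to the accelerated steps \eqref{ac}.
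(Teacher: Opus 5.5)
Your proposal is correct and follows essentially the same route as the paper. Both arguments bound $\|\zeta^k\|\le 3\lambda_p$ in the \eqref{nac} regime and read \eqref{prox_y_1} componentwise with the gap $\lambda_0/(\nu\beta)-3\lambda_p/\beta>0$ for (i); for (ii), both split into the eventually-accepted-\eqref{ac} regime versus the eventually-\eqref{nac} regime, where $d_j=1$ is absorbing and a $2\leftrightarrow 3$ flip is excluded. The differences are cosmetic: you bound $|y_j^k-\bar y_j^{k-1}|$ via $|\bar y_j^{k-1}|<\nu$ instead of the paper's sign-based estimate (so, as you suspected, no bound on $\{\bar y^k\}$ is needed), and you rule out $\bar y_j^k\le-\nu$ directly rather than proving $\bar y_j^k\ge 0$, with the implicit constant check $\nu-3\lambda_p/\beta>-2\lambda_0/(\beta\nu)$ following immediately from Assumption~\ref{asn_1}.
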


\begin{proof} 
{\rm(i)} Using \eqref{eq:q_update} and \eqref{eq:mu_update}, we obtain \(\bar\mu^k\in\lambda_p\partial\|\bar q^k\|\). Hence, $\|\bar\mu^k\|\leq\lambda_p$ for all $k\in\mathbb{Z}_+$.
Define \(\varepsilon=\lambda_0/(\nu\beta)-3\lambda_p/\beta\). By Assumption~\ref{asn_1}, \(\varepsilon>0\). There exists \(K_1>1\) such that 
\begin{equation}\label{nad1}
\|y^0\|/(k+1)<\min\{\varepsilon/2,\nu/2\},\quad\forall k>K_1.
\end{equation}
If \(w^k\) is updated by \eqref{nac}, we have \(\mu^k=\mu^0/(k+1)+k\bar\mu^{k-1}/(k+1)\). This together with \(\|\mu^0\|<\lambda_p\) gives \(\|\mu^k\|\leq\lambda_p\). By \eqref{eq:mu_update}, it holds that \(\zeta^k=2\bar\mu^k-\mu^k\). Then, \(\|\zeta^k\|\leq3\lambda_p\) when \(w^k\) is updated by \eqref{nac}.

Note that \(d_j^k=1\) when \(|\bar y_j^{k-1}|<\nu\) with any $j\in[m]$.
 
If \(\bar y_j^{k-1}=0\), \(|y_j^k|=|y_j^0/(k+1)+k\bar y_j^{k-1}/(k+1)|\leq\|y^0\|/(k+1)<\varepsilon/2\). 
By \eqref{prox_y_1}, 
$|\bar\tau_j^k|=|y_j^k+\zeta_j^k/\beta|<\varepsilon/2+3\lambda_p/\beta<\lambda_0/(\beta\nu).$
Hence, \(\bar y_j^k=0\).

When \(0<|\bar y_j^{k-1}|<\nu\), we only prove the case \(0<\bar y_j^{k-1}<\nu\), since the argument for \(-\nu<\bar y_j^{k-1}<0\) is analogous. By $y_j^k=y_j^0/(k+1)+k\bar y_j^{k-1}/(k+1)$ and \eqref{nad1}, we have \(-\varepsilon/2<y_j^k<\bar y_j^{k-1}+\varepsilon/2\) for \(k>K_1\). Using \(\|\zeta^k\|\leq3\lambda_p\), we obtain \(\bar\tau_j^k=y_j^k+\zeta_j^k/\beta>-\varepsilon/2-3\lambda_p/\beta>-\lambda_0/(\beta\nu)\). 
Then \(\bar y_j^k\geq0\) by \eqref{prox_y_1}. If \(\bar y_j^k>0\), \(\bar y_j^k=y_j^k+\zeta_j^k/\beta-\lambda_0/(\beta\nu)\leq y_j^k+3\lambda_p/\beta-\lambda_0/(\beta\nu)<\bar y_j^{k-1}-\varepsilon/2\).

{\rm(ii)} Notice that there exists $K\geq K_1$ such that
Algorithm~\ref{alg:prox_admm} updates $w^{k+1}$ by
\eqref{ac} or \eqref{nac} for all $k\geq K$.
If $w^{k+1}$ is updated by \eqref{ac} for all $k\geq K$,
then \(d^{k+2}=d^{k+1}\) for all \(k\geq K\).
Hence, \(d^k=d^{K+1}\) for all \(k>K+1\).  

Otherwise, \(w^k\) is updated by \eqref{nac}
for all \(k\geq K+1\). By \rm(i) of this lemma, it holds that for every \(k\geq K\), \(d_j^k=1\) implies \(d_j^{k+1}=1\). Since $[m]$ is finite, it suffices to show that, for every \(k>K_1\) and \(j\in [m]\), \(d_j^k=2\) implies \(d_j^{k+1}\neq3\), and \(d_j^k=3\) implies \(d_j^{k+1}\neq2\). We only prove the former, as the latter follows analogously. If \(d_j^k=2\), we have \(\bar y_j^{k-1}\geq\nu\), which together with \eqref{nad1} implies that for any \(k>K_1\), 
\(y_j^k=y_j^0/(k+1)+k\bar y_j^{k-1}/(k+1)
    >\nu/2-\|y^0\|/(k+1)
    >0.\)
By \(\|\zeta^k\|\leq3\lambda_p\) in this case, we obtain \(\bar\tau_j^k=y_j^k+\zeta_j^k/\beta+\lambda_0/(\beta\nu)\geq y_j^k+\varepsilon>0.\) Combining this with \eqref{prox_y_1}, it follows that \(\bar y_j^k\geq0\), and hence \(d_j^{k+1}\neq3\) by \eqref{lifi}. Thus, there exist \(\hat K\geq K\) and \(\bar d\in\mathbb D^m\) such that \(d^k=\bar d\) for all \(k>\hat K\).

 Then both statements {\rm(i)} and {\rm(ii)} hold for all \(k> \hat K\). 
\end{proof}

In view of Lemma~\ref{unchanged}, we know that there exist \(\bar d\in\mathbb D^m\) and \(\hat K\in\mathbb{Z}_+\) such that \(\Phi^{d^k}=\Phi^{\bar d}\) for all \(k>\hat K\). Define 
\[
h(u):=\lambda_0\Phi^{\bar d}(B_3u),\quad
\forall u\in\mathbb{R}^{n+m}
\quad\text{and}\quad
c(v):=\lambda_p\|v\|,\quad
\forall v\in\mathbb{R}^{m}.
\]
Algorithm~\ref{alg:prox_admm} reduces, after iteration $\hat K$, to solving the convex optimization problem
\begin{equation}
    \min_{u}\; f(B_1u)+c(B_2u)+h(u).
    \label{convex_pb}
\end{equation}
The following assumption ensures the existence of a minimizer of problem~\eqref{convex_pb}.

\begin{assumption}\label{assumption_4}
The function $f$ is level-bounded.
\end{assumption}

\begin{lemma}\label{existence_convex}
Under Assumption~\ref{assumption_4}, the problem~\eqref{convex_pb} admits a minimizer.
\end{lemma}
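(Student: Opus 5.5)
The plan is to write the objective of \eqref{convex_pb} explicitly as
\[
F(u)=f(Ax)+\lambda_p\|Dx-y\|+\lambda_0\Phi^{\bar d}(y),\qquad u=(x^\top,y^\top)^\top.
\]
Then I would establish three facts about $F$ and combine them through a recession argument.

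First, $F$ is finite, convex and continuous. Second, $F$ is bounded below. Indeed, $f$ is continuous, convex and level-bounded by Assumption~\ref{assumption_4}, so $\inf f>-\infty$. Moreover, each summand of $\Phi^{\bar d}$ is nonnegative: it equals $|t|/\nu$ if $\bar d_i=1$, $2\max\{-t,0\}/\nu+1$ if $\bar d_i=2$, and $2\max\{t,0\}/\nu+1$ if $\bar d_i=3$. Third, I would compute the recession function
\[
F_\infty(v)=f_\infty(Av_x)+\lambda_p\|Dv_x-v_y\|+\lambda_0(\Phi^{\bar d})_\infty(v_y).
\]
Since $f$ is convex and level-bounded, $f_\infty(w)>0$ for all $w\neq \mathbf 0$. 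Hence
\[
R:=\{v:F_\infty(v)\le 0\}=\{v:\ Av_x=\mathbf 0,\ v_y=Dv_x,\ (v_y)_i=0 \text{ if } \bar d_i=1,\ (v_y)_i\ge 0 \text{ if } \bar d_i=2,\ (v_y)_i\le 0 \text{ if } \bar d_i=3\}.
\]
This $R$ is a polyhedral cone, and $F$ is nonincreasing along every direction in $R$.

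If every $v\in R$ were a direction of constancy, \cite[Theorem~27.1(b)]{RocC} would finish the proof. That fails here. For $\bar d_i=2$ and $(v_y)_i>0$, the term $2\max\{-y_i,0\}/\nu$ decreases along $v$ until $y_i\ge 0$, and only then becomes constant. So the main step is to exploit this eventual constancy. The key observation is that for each $v\in R$ there is a threshold beyond which all three terms of $F$ are constant along $v$: $Av_x=\mathbf 0$, $v_y=Dv_x$, and $y_i$ has the sign prescribed by $\bar d_i$. I would restrict the problem to the closed convex set
\[
S:=\{u:\ y_i\ge 0 \text{ if } \bar d_i=2,\ y_i\le 0 \text{ if } \bar d_i=3\}
\]
and show that $\inf_S F=\inf F$. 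To see this, take any $u$ and $v\in R$ with $(v_y)_i>0$ on $\{i:\bar d_i=2\}$ and $(v_y)_i<0$ on $\{i:\bar d_i=3\}$, which I expect to exist only in favourable cases. More robustly, I would argue directly with a minimizing sequence $\{u^k\}$.

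For the minimizing sequence, level-boundedness of $f$ gives boundedness of $\{Ax^k\}$. The coupling term keeps $\{Dx^k-y^k\}$ bounded, and the $\bar d_i=1$ part keeps $y^k_i$ bounded for those $i$. The $\bar d_i=2$ part bounds $\max\{-y_i^k,0\}$ from above, and similarly for $\bar d_i=3$. Thus $u^k$ lies in a set of the form $\{u: Mu\le b\}+\{\text{bounded}\}$, with $M$ linear and its recession cone equal to $R$. By a Hoffman-type decomposition I would then write $u^k=s^k+r^k$ with $\{s^k\}$ bounded and $r^k\in R$. The aim is to replace $u^k$ by a bounded point $\tilde u^k$ with $F(\tilde u^k)\le F(u^k)+o(1)$. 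Then any cluster point of $\{\tilde u^k\}$ is a minimizer, by continuity of $F$.

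I expect the main obstacle to be this replacement. Translating back along $-r^k$ can increase the $\Phi^{\bar d}$ term. What I would try first is to choose the decomposition so that $s^k$ already satisfies the sign constraints defining $S$ up to a bounded error. On $S\cap(\text{bounded}+R)$, moving along $R$ leaves every term of $F$ exactly unchanged, so $F(s^k)$ and $F(u^k)$ should differ by at most a vanishing amount. If this bookkeeping becomes delicate, the fallback is a dimension-reduction argument. Every term of $F$ is constant along the lineality part $\{v: Av_x=\mathbf 0,\ v_y=Dv_x,\ (v_y)_i=0\ \forall i\}$, so one can quotient it out. One then checks that the quotient problem has no nonconstant recession directions except those on which the piecewise-linear part of $\Phi^{\bar d}$ is eventually constant. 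For such a polyhedral-plus-level-bounded structure, attainment follows from \cite[Theorem~27.3]{RocC}.
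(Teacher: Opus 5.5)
\textbf{There is a genuine gap at exactly the step you flag as the main obstacle, and neither of your plans closes it.}

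\textbf{The minimizing-sequence plan.} The claim that $F(s^k)$ and $F(u^k)$ differ by a vanishing amount is unsupported. Take a Minkowski--Weyl decomposition $u^k=s^k+r^k$ of $\{u:Mu\le b\}$. If $\bar d_i=2$, $y^k_i<0$ and $(r^k_y)_i>0$, then passing from $u^k$ to $s^k$ raises the $i$-th summand of $\lambda_0\Phi^{\bar d}$ by exactly $2\lambda_0(r^k_y)_i/\nu$. A minimizing sequence only keeps $\max\{-y_i^k,0\}$ bounded, not small, so nothing makes this increase $o(1)$. Asking $s^k$ to satisfy the sign constraints of $S$ ``up to a bounded error'' is vacuous once $s^k$ is bounded.

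What is missing is to decompose inside the right polyhedron. Split $\{Mu\le b\}$ into cells according to the finitely many sign patterns of $y$ on $\{i:\bar d_i\neq 1\}$. On each cell $\Phi^{\bar d}$ is affine. The cell's recession cone consists of those $v\in R$ with $(v_y)_i=0$ on the violated coordinates, and $F$ is exactly constant along it within the cell. This gives bounded $s^k$ with $F(s^k)=F(u^k)$.

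\textbf{The fallback.} It fails for the same reason. After quotienting out the lineality space, directions $v\in R$ with $(v_y)_i>0$ for some $\bar d_i=2$ are still recession directions along which $F$ is only eventually constant. But \cite[Theorem~27.3]{RocC} requires the common recession directions to be directions in which the objective is constant. With the whole space as constraint set it reduces to Theorem~27.1(b), which you already saw does not apply. It becomes applicable only once the polyhedral term is moved into a polyhedral constraint set. For example, minimize $f(Ax)+\lambda_p\|Dx-y\|+\lambda_0\sum_i t_i$ over the epigraphical set of the affine pieces of $\Phi^{\bar d}$; there the common recession directions are $R\times\{\mathbf{0}\}$, and these are genuine directions of constancy.

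\textbf{The reduction to $S$.} Your abandoned reduction can also be rescued, but only through a parameter condition you never invoke. Since Assumption~\ref{asn_1} is in force, $2\lambda_0/\nu>6\lambda_p$. Replacing $y_i<0$ by $0$ for $\bar d_i=2$ (and $y_i>0$ by $0$ for $\bar d_i=3$) lowers $\lambda_0\Phi^{\bar d}$ by $2\lambda_0/\nu$ times the $\ell_1$-change, while raising $\lambda_p\|Dx-y\|$ by at most $\lambda_p$ times it. Hence $\inf_S F=\inf F$. On $S$ the objective agrees up to a constant with $f(Ax)+\lambda_p\|Dx-y\|+(\lambda_0/\nu)\sum_{\bar d_i=1}|y_i|$, whose recession directions form a subspace of constancy. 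So Theorem~27.3 applies with the polyhedral set $S$. Without such a condition, $\inf_S F=\inf F$ can fail.

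\textbf{The paper's route.} The paper needs none of this; it pushes $h$ forward under $u\mapsto(B_1u,B_2u)$. The image $\widetilde h(z)=\inf\{h(u):B_1u=z_1,\ B_2u=z_2\}$ is polyhedral (hence closed) and nonnegative, and it attains its defining infimum by \cite[Corollary~19.3.1]{RocC}. Therefore $f(z_1)+\lambda_p\|z_2\|+\widetilde h(z)$ is lower semicontinuous and level-bounded on the range of that map. A minimizer $z^*$ exists, and any $u^*$ realizing $\widetilde h(z^*)$ solves \eqref{convex_pb}. The attainment property of polyhedral images is exactly what absorbs the eventually-constant directions that your decomposition leaves unaccounted for.
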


\begin{proof}
Define $S=\{(z_1,z_2):\exists\,u\ \text{with}\ z_1=B_1u,\ z_2=B_2u\}$, which is a closed convex subspace. Denote \(z=(z_1,z_2)\) and \(\widetilde f(z)=f(z_1)+c(z_2)\). Since \(f\) is level-bounded, so is \(\widetilde f\). For every \(z\in S\), define
$\widetilde h(z):=\inf\{h(u):B_1u=z_1,\ B_2u=z_2\}.$
Since \(h\) is polyhedral convex and bounded below by \(0\), it follows from \cite[Corollary~19.3.1]{RocC} that \(\widetilde h\) is polyhedral convex, lower semicontinuous, and its
infimum in its definition is attained. By \(\widetilde h(z)\geq0\) for any $z$, we obtain that $\widetilde f+\widetilde h$ is level-bounded. Consequently, the problem \(\min_{z\in S}\widetilde f(z)+\widetilde h(z)\) admits a minimizer \(z^*\). Then, there exists \(u^*\) satisfying \(B_1u^*=z_1^*\) and \(B_2u^*=z_2^*\), such that \(\widetilde h(z^*)=h(u^*)\). For any $u$, let $z=(z_1, z_2)$ with $z_1=B_1u$ and $z_2=B_2u$. Since $z\in S$, we have, for any $u$,
\[
\begin{aligned}
f(B_1u)+c(B_2u)+h(u)\geq \widetilde f(z)+\widetilde h(z)
&\geq \widetilde f(z^*)+\widetilde h(z^*) \\
    &=f(B_1u^*)+c(B_2u^*)+h(u^*).
\end{aligned}
\]
Therefore, \(u^*\) is a global minimizer of problem~\eqref{convex_pb}.
\end{proof}

The Lagrangian function associated with problem~\eqref{convex_pb} is defined by
\begin{equation}\label{eq:Lagrangian_convex}
\mathcal L(p,q,u;\eta,\mu)
:=f(p)+c(q)+h(u)
+\langle\eta,B_1u-p\rangle
+\langle\mu,B_2u-q\rangle.
\end{equation}
Solving problem~\eqref{convex_pb} is equivalent to finding a KKT point
\(w^*=(p^*,q^*,u^*,\eta^*,\mu^*)\) satisfying
\begin{equation}\label{KKT_convex}
\begin{cases}
\mathbf{0}\in\partial f(p^*)-\eta^*,\quad \mathbf{0}\in\partial c(q^*)-\mu^*,\\[2pt]
\mathbf{0}\in\partial h(u^*)+B_1^\top\eta^*+B_2^\top\mu^*,\\[2pt]
p^*=B_1u^*,\quad q^*=B_2u^*.
\end{cases}
\end{equation}
For notational simplicity, we define the operators \(\mathcal{T}\) and \(\mathcal{M}\) as follows:
\begin{equation}\label{def_T_M}
\mathcal{T}(w) =
\begin{pmatrix}
\partial f(p) - \eta \\[2pt]
\partial c(q) - \mu \\[2pt]
\partial h(u) + B_1^{\top} \eta + B_2^{\top} \mu \\[2pt]
p - B_1 u \\[2pt]
q - B_2 u
\end{pmatrix}, \quad
\mathcal{M} =
\begin{pmatrix}
0 & 0 & 0 & 0 & 0 \\
0 & 0 & 0 & 0 & 0 \\
0 & 0 & \beta I & B_1^{\top} & B_2^{\top} \\
0 & 0 & B_1 & \frac{1}{\rho} I & 0 \\
0 & 0 & B_2 & 0 & \frac{1}{\rho} I
\end{pmatrix}.
\end{equation}
Then the KKT system in~\eqref{KKT_convex} can be compactly written as $w^*\in \mathcal{T}^{-1}(\mathbf{0})$. 

Similar to the proof of Lemma~3.1 in \cite{Sun2025Ac}, we obtain the following basic properties of the operators $\mathcal{T}$ and $\mathcal{M}$, the proofs of which are omitted. 

\begin{lemma}\label{property_T_M}
If \(\beta I-\rho(B_1^\top B_1+B_2^\top B_2)\succ0\), then \(\mathcal T\) is maximally monotone, \(\mathcal M\) is symmetric positive semidefinite with closed range, and \((\mathcal T+\mathcal M)^{-1}\) is single-valued and Lipschitz continuous.
\end{lemma}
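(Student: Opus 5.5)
The plan is to verify the three claims about $\mathcal T$ and $\mathcal M$ in turn, following the structure of Lemma~3.1 in \cite{Sun2025Ac}.

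\emph{Maximal monotonicity of $\mathcal T$.} Split $\mathcal T=\mathcal A+\mathcal S$. Here $\mathcal A(w)=\partial f(p)\times\partial c(q)\times\partial h(u)\times\{\mathbf 0\}\times\{\mathbf 0\}$. The operator $\mathcal S$ is the linear map
\[
w\mapsto(-\eta,\,-\mu,\,B_1^\top\eta+B_2^\top\mu,\,p-B_1u,\,q-B_2u).
\]
The functions $f$, $c$, $h$ are proper, closed and convex, with $h$ polyhedral since $\bar d$ is fixed. Hence $\mathcal A$ is the subdifferential of the separable convex function $(p,q,u)\mapsto f(p)+c(q)+h(u)$, and so it is maximally monotone. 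Next we check that $\mathcal S$ is skew-symmetric: for every $w$,
\[
\langle w,\mathcal S w\rangle=-\langle p,\eta\rangle-\langle q,\mu\rangle+\langle u,B_1^\top\eta+B_2^\top\mu\rangle+\langle\eta,p-B_1u\rangle+\langle\mu,q-B_2u\rangle=0.
\]
A skew linear operator defined on the whole space is maximally monotone. A maximally monotone operator plus a continuous everywhere-defined monotone linear operator remains maximally monotone. Therefore $\mathcal T$ is maximally monotone.

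\emph{Properties of $\mathcal M$.} Symmetry can be read off directly from \eqref{def_T_M}. For positive semidefiniteness, write $\mathcal M=\mathrm{diag}(0,0,\widetilde{\mathcal M})$, where
\[
\widetilde{\mathcal M}=\begin{pmatrix}\beta I & B^\top\\ B & \rho^{-1}I\end{pmatrix},\qquad B=\begin{pmatrix}B_1\\ B_2\end{pmatrix}.
\]
Take the Schur complement with respect to the positive definite block $\rho^{-1}I$. This gives $\beta I-\rho B^\top B=\beta I-\rho(B_1^\top B_1+B_2^\top B_2)\succ0$ by hypothesis. Hence $\widetilde{\mathcal M}\succ0$, and so $\mathcal M\succeq0$. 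Its range is $\{0\}\times\{0\}\times\mathbb R^{n+m}\times\mathbb R^r\times\mathbb R^m$. This is a subspace of a finite-dimensional space, so it is closed.

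\emph{Single-valuedness and Lipschitz continuity of $(\mathcal T+\mathcal M)^{-1}$.} This is the main point, because $\mathcal M$ is only semidefinite. Fix $v$ and let $w\in(\mathcal T+\mathcal M)^{-1}(v)$. I will write out the inclusion $v\in\mathcal T(w)+\mathcal Mw$ componentwise and show that it encodes one sweep of Steps~1--2.

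The $p$- and $q$-components read $v_p\in\partial f(p)-\eta$ and $v_q\in\partial c(q)-\mu$. The last two components give
\[
\eta=\rho\bigl(v_\eta-p+2B_1u\bigr)-\ldots,
\]
i.e.\ they express $\eta$ and $\mu$ affinely in terms of $(p,q,u)$ and $v$. Substituting these into the $p$- and $q$-inclusions gives $p=\operatorname{prox}_{f/\rho}(\text{affine in }u,v)$ and $q=\operatorname{prox}_{c/\rho}(\text{affine in }u,v)$. The third component then becomes a strongly monotone inclusion in $u$, namely
\[
0\in\partial h(u)+(\beta I-\rho B^\top B)u+\text{(Lipschitz terms)}.
\]
Alternatively, for monotone $\mathcal T$ and $\widetilde{\mathcal M}\succ0$, one can use the standard trick from \cite{Sun2025Ac}. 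Suppose $w,w'$ are solutions for data $v,v'$. Monotonicity gives
\[
\|w-w'\|_{\mathcal M}^2\le\langle v-v',w-w'\rangle.
\]
This controls $(u,\eta,\mu)$. Then the $p$- and $q$-components are recovered by Lipschitz proximal maps of data depending on $(u,\eta,\mu,v)$. This yields uniqueness and a Lipschitz bound; I expect to combine the two routes, using this Lipschitz estimate for uniqueness and the explicit prox formulas for the $(p,q)$ recovery.

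Existence for every $v$ comes from Minty's theorem applied to the maximally monotone operator $\mathcal T$ perturbed by $\mathcal M$. More concretely, it comes from the explicit construction above: Steps~1--2 produce a solution for every input, with $u$ recovered from the resolvent of $\partial h$.

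The delicate step is handling the degenerate $(p,q)$-block of $\mathcal M$. I will resolve it by exploiting the firm nonexpansiveness of the prox operators rather than $\mathcal M$ itself.
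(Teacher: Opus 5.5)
Your treatment of $\mathcal T$ (subdifferential of a separable convex function plus a skew linear map) is correct. So is your treatment of $\mathcal M$ (Schur complement of the block $\widetilde{\mathcal M}$, whose invertibility also gives the range). This is the standard argument behind Lemma~3.1 of \cite{Sun2025Ac}, to which the paper defers.

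The third part, however, rests on a miscomputed elimination. In the $\eta$- and $\mu$-rows of $\mathcal T+\mathcal M$, the terms $-B_1u$, $-B_2u$ from $\mathcal T$ cancel against $+B_1u$, $+B_2u$ from $\mathcal M$. These rows therefore read $v_\eta=p+\eta/\rho$ and $v_\mu=q+\mu/\rho$, so there is no $2B_1u$ in the expression for $\eta$. The $u$-row is $v_u\in\partial h(u)+\beta u+2B_1^\top\eta+2B_2^\top\mu$, with no $(\beta I-\rho B^\top B)u$ term. The inclusion $v\in(\mathcal T+\mathcal M)w$ is in fact triangular:
\[
p=\operatorname{prox}_{f/\rho}(v_\eta+v_p/\rho),\quad \eta=\rho(v_\eta-p),\quad q=\operatorname{prox}_{c/\rho}(v_\mu+v_q/\rho),\quad \mu=\rho(v_\mu-q),
\]
\[
u=\operatorname{prox}_{h/\beta}\bigl((v_u-2B_1^\top\eta-2B_2^\top\mu)/\beta\bigr).
\]
This one computation gives existence for every $v\in\mathbb W$, uniqueness, and a Lipschitz bound, since it composes nonexpansive proximal maps with linear maps. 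So you need not hedge between two routes. It also shows that the hypothesis $\beta I-\rho(B_1^\top B_1+B_2^\top B_2)\succ0$ is needed only for $\mathcal M\succeq0$, not for this part. With $v=\mathcal Mw^k$ it reproduces Steps~1--2 and \eqref{adu}.

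Two further claims should be removed or repaired. First, Minty's theorem does not give surjectivity of $\mathcal T+\mathcal M$ when $\mathcal M$ is singular (take $\mathcal T\equiv0$), so existence must come from the explicit formulas. Second, Steps~1--2 only solve $\mathcal Mw\in(\mathcal T+\mathcal M)\bar w$. They cover inputs in the range of $\mathcal M$, not every input.

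Your monotonicity estimate $\|w-w'\|_{\mathcal M}^2\le\langle v-v',w-w'\rangle$ is a valid alternative for uniqueness and Lipschitz continuity, with one condition. You must control the $(p,q)$-part through the linear relations $p=v_\eta-\eta/\rho$ and $q=v_\mu-\mu/\rho$, not through the inclusions $v_p+\eta\in\partial f(p)$, $v_q+\mu\in\partial c(q)$, which do not determine $p,q$ uniquely. You then solve the resulting quadratic inequality for the $(u,\eta,\mu)$-difference. By itself, this route gives no existence.
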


By the definitions of $B_1$ and $B_2$, the condition
$\beta I-\rho(B_1^\top B_1+B_2^\top B_2)\succ0$
in the above lemma is equivalent to
$\beta>\rho$ and
$\beta I-\rho A^\top A-\rho\beta D^\top D/(\beta-\rho)\succ0$.

Let $\hat t=1$ if $\mathtt{acc}=0$ and $\hat t=t$ if
$\mathtt{acc}=1$. Define
\[
\widehat{\mathcal T}=(\mathcal T+\mathcal M)^{-1}\mathcal M
\quad\mbox{and}\quad
\widehat{\mathcal F}_{\hat t}
=(1-\hat t)I+\hat t\widehat{\mathcal T}.
\]
The single-valuedness of $(\mathcal T+\mathcal M)^{-1}$ implies that $w^*\in\mathcal T^{-1}(\mathbf{0})$ if and only if $w^*\in\operatorname{Fix}(\widehat{\mathcal T})$, where we denote the set of fixed points of an operator $\widehat{\mathcal T}$ by $\operatorname{Fix}(\widehat{\mathcal T})$. Inspired by the proof techniques in~\cite{Sun2025Ac}, we show that, after finitely many iterations, Algorithm~\ref{alg:prox_admm} either reduces to the fast KM iteration \cite{bot2023fast} if $\alpha>2$ and $w^{k+1}$ is updated by \eqref{ac}
for infinitely many $k$, or reduces to the Halpern iteration \cite{xu2002another} otherwise. We then establish the convergence of Algorithm~\ref{alg:prox_admm}
in what follows, for which we first provide some preliminary results.

\begin{lemma}\label{equivalent}
Under Assumption~\ref{asn_1}, let $\hat K$ be as in Lemma~\ref{unchanged}. Then, for all
$k>\hat K$, $\bar w^k=\widehat{\mathcal T}w^k$. Further, for all \(k>\hat K\), if $\alpha>2$ and $w^{k+1}$ is updated by
\eqref{ac} infinitely often $(\mathtt{acc}=1)$, then its iteration can be equivalently formulated by
\begin{equation}\label{equivalent_rep1}
\begin{array}{l}
     w^{k+1} =w^k+\dfrac{\alpha}{2(k+\alpha)}(\widehat{\mathcal F}_{\hat t} w^k-w^k)
    +\dfrac{k}{k+\alpha}(\widehat{\mathcal F}_{\hat t} w^k-\widehat{\mathcal F}_{\hat t} w^{k-1}).
\end{array}
\end{equation}
Otherwise, 
\begin{equation}\label{equivalent_rep2}
\begin{array}{l}
    w^{k+1} =\dfrac{1}{k+2}w^0+\dfrac{k+1}{k+2}\widehat{\mathcal F}_{\hat t} w^k.
\end{array}
\end{equation}
\end{lemma}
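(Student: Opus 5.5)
The plan is to prove the identity $\bar w^k=\widehat{\mathcal T}w^k$ first and then substitute it into Step~4. For $k>\hat K$, Lemma~\ref{unchanged}(ii) gives $d^k=\bar d$, so $\Phi^{d^k}=\Phi^{\bar d}$ and $\lambda_0\Phi^{d^k}(B_3\cdot)=h$. Since $(\mathcal T+\mathcal M)^{-1}$ is single-valued (Lemma~\ref{property_T_M}), it suffices to show $\mathcal M w^k\in(\mathcal T+\mathcal M)\bar w^k$, i.e.\ to verify the five block inclusions componentwise.

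\textbf{The $p$- and $q$-rows.} The $p$-row of $\mathcal M$ is zero, so we need $\mathbf 0\in\partial f(\bar p^k)-\bar\eta^k$. The optimality condition of \eqref{eq:p_update} gives $\mathbf 0\in\partial f(\bar p^k)-\eta^k-\rho(Ax^k-\bar p^k)$, which is exactly this by \eqref{eq:eta_update}. The $q$-row follows in the same way from \eqref{eq:q_update} and \eqref{eq:mu_update}, giving $\bar\mu^k\in\partial c(\bar q^k)$.

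\textbf{The $\eta$- and $\mu$-rows.} The $\eta$-row requires
\[
\bar p^k-B_1\bar u^k+B_1\bar u^k+\tfrac1\rho\bar\eta^k=B_1u^k+\tfrac1\rho\eta^k,
\]
i.e.\ $\bar\eta^k=\eta^k+\rho(B_1u^k-\bar p^k)$, which is \eqref{eq:eta_update}. The $B_1\bar u^k$ terms cancel, and this cancellation is what makes the scheme work. The $\mu$-row is the same computation with \eqref{eq:mu_update}.

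\textbf{The $u$-row (the main computation).} We must show
\[
\mathbf 0\in\partial h(\bar u^k)+B_1^\top\bar\eta^k+B_2^\top\bar\mu^k+\beta(\bar u^k-u^k)+B_1^\top(\bar\eta^k-\eta^k)+B_2^\top(\bar\mu^k-\mu^k).
\]
By \eqref{eq:eta_update} and \eqref{eq:mu_update}, $2\bar\eta^k-\eta^k=\rho(Ax^k-\bar p^k)+\bar\eta^k$ and $2\bar\mu^k-\mu^k=\zeta^k$. Since $B_1^\top=(A^\top,0)^\top$, $B_2^\top=(D^\top,-I)^\top$, and $\partial h(u)=\{0\}\times\lambda_0\partial\Phi^{\bar d}(y)$, the $x$-block reads $\mathbf 0=A^\top(\rho(Ax^k-\bar p^k)+\bar\eta^k)+D^\top\zeta^k+\beta(\bar x^k-x^k)$, which is the $\bar x^k$ update. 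The $y$-block reads $\mathbf 0\in\lambda_0\partial\Phi^{\bar d}(\bar y^k)-\zeta^k+\beta(\bar y^k-y^k)$, which is the optimality condition of the prox defining $\bar y^k$ with $d^k=\bar d$. Hence $\bar w^k=\widehat{\mathcal T}w^k$.

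\textbf{Substituting into the iteration.} In the accelerated branch, $\hat w^{k+1}=(1-t)w^k+t\bar w^k=\widehat{\mathcal F}_{t}w^k$. Since $\mathtt{acc}=1$ persists, $\hat t=t$, and \eqref{ac} becomes \eqref{equivalent_rep1}, provided $\hat w^k=\widehat{\mathcal F}_t w^{k-1}$. This is the main bookkeeping obstacle. Because $N_{\mathrm{fail}}$ is bounded, only finitely many safeguard resets occur. Moreover, for $k>\hat K$ we have $d^{k+2}=d^{k+1}$ automatically, so no reset overwrites $\hat w^{k+1}=\bar w^k$, and $(\alpha,t)\neq(2,1)$ in this branch. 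So after enlarging $\hat K$ if needed, every step is \eqref{ac} with $\hat w^k$ produced by the previous \eqref{ac} step, which gives the identity.

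In the remaining cases (either $(\alpha,t)=(2,1)$, or $\mathtt{acc}=0$, or \eqref{ac} is used only finitely often), every step beyond some index is \eqref{nac}. Then either $\mathtt{acc}=0$, so $\hat t=1$, or $(\alpha,t)=(2,1)$ with $t=1$. In both subcases $\widehat{\mathcal F}_{\hat t}=\widehat{\mathcal T}$, and \eqref{nac} becomes \eqref{equivalent_rep2} via $\bar w^k=\widehat{\mathcal T}w^k$. I will note that the statement implicitly takes $\hat K$ large enough that the regime has stabilized, which is permitted by Lemma~\ref{unchanged}.
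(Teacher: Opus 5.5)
Your verification of $\bar w^k=\widehat{\mathcal T}w^k$ is correct and is the paper's argument. The paper rewrites Step~2 as the single prox \eqref{adu} rather than splitting the $u$-row into $x$- and $y$-blocks, but both routes establish $\mathcal M w^k\in(\mathcal T+\mathcal M)\bar w^k$ and then use single-valuedness.

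The gap is in your case analysis for the second part. The ``otherwise'' clause covers everything except ``$\alpha>2$ and \eqref{ac} used infinitely often''. It therefore includes $\alpha=2$, $t\neq 1$ with $\mathtt{acc}=1$ throughout. In that case the algorithm always enters the Else branch of Step~4, only finitely many rejections occur, and eventually every step is an accepted \eqref{ac} step. So your claim that in the remaining cases ``every step beyond some index is \eqref{nac}'' is false here. Moreover $\hat t=t\neq 1$, so the reduction $\widehat{\mathcal F}_{\hat t}=\widehat{\mathcal T}$ is not available. Your accelerated-branch computation would only give \eqref{equivalent_rep1} with $\alpha=2$. The lemma asserts \eqref{equivalent_rep2}, which is exactly the form Proposition~\ref{convergence} uses when $\alpha=2$.

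The missing ingredient is that the $\alpha=2$ scheme coincides with the Halpern iteration anchored at $w^0$; the paper takes this from \cite[Remark~2.6]{Sun2025Ac}. It follows from a short invariant argument. With $\alpha=2$, \eqref{ac} can be rewritten as
\[
(k+2)w^{k+1}-(k+1)\hat w^{k+1}=(k+1)w^k-k\hat w^k ,
\]
so the quantity $(k+1)w^k-k\hat w^k$ is preserved by every accepted \eqref{ac} step.
\begin{itemize}
\item It equals $w^0$ at $k=0$.
\item Each safeguard rejection resets it to $w^0$, because at a rejection $\hat w^{k+1}=\bar w^k$ and $w^{k+1}=\frac{1}{k+2}w^0+\frac{k+1}{k+2}\bar w^k$.
\end{itemize}
Hence $w^{k+1}=\frac{1}{k+2}w^0+\frac{k+1}{k+2}\hat w^{k+1}$ at every step. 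Once $k>\hat K$ lies past the last rejection, $\hat w^{k+1}=(1-t)w^k+t\bar w^k=\widehat{\mathcal F}_t w^k$, which yields \eqref{equivalent_rep2} with $\hat t=t$. To get the anchor equal to $w^0$ rather than some other point, you must track this invariant through the whole history, including the resets, not just from $\hat K$ onward.
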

\begin{proof}
For each $k>\hat K$, Step~2 of Algorithm~\ref{alg:prox_admm} reduces to
\begin{equation}\label{adu}
\bar u^k=\operatorname{prox}_{h/\beta} \left( u^k - \frac{1}{\beta} \left[ B_1^{\top}\big(\rho(B_1 u^k - \bar{p}^k) + \bar{\eta}^k\big) + B_2^{\top}\big(\rho(B_2 u^k - \bar{q}^k) + \bar{\mu}^k\big) \right] \right).
\end{equation} 
Applying the first-order optimality conditions to \eqref{eq:p_update}, \eqref{eq:q_update} and \eqref{adu}, together with \eqref{eq:eta_update} and \eqref{eq:mu_update}, yields 
\begin{equation}\label{optimality_conditions}
\begin{cases}
\mathbf{0}\in\partial f(\bar p^k)-\bar\eta^k, 
\quad 
\mathbf{0}\in\partial c(\bar q^k)-\bar\mu^k,\\
\mathbf{0}\in\partial h(\bar u^k)+B_1^\top\bar\eta^k+B_2^\top\bar\mu^k
+\beta(\bar u^k-u^k)
+B_1^\top(\bar\eta^k-\eta^k)
+B_2^\top(\bar\mu^k-\mu^k).
\end{cases}
\end{equation}
Moreover, \eqref{eq:eta_update} and \eqref{eq:mu_update} can be rewritten as $B_1u^k+\eta^k/\rho=\bar p^k+\bar\eta^k/\rho$ and
$B_2u^k+\mu^k/\rho=\bar q^k+\bar\mu^k/\rho.$
Consequently, it gives \(\mathcal Mw^k\in(\mathcal T+\mathcal M)\bar w^k\). Since \((\mathcal T+\mathcal M)^{-1}\) is single-valued, we obtain
\(\bar w^k=(\mathcal T+\mathcal M)^{-1}\mathcal Mw^k.\) This together with Steps 3 and 4 of Algorithm~\ref{alg:prox_admm} and \cite[Remark~2.6]{Sun2025Ac} implies the conclusions hold.
\end{proof}

\begin{proposition}\label{convergence}
Under Assumptions~\ref{asn_1} and~\ref{assumption_4}, let $\hat K$ be as in Lemma~\ref{unchanged}. Then \(\bar w^k\to w^*\in\mathcal T^{-1}(\mathbf{0})\). Moreover, for all $k>\hat K$,
$\|w^k-\bar w^k\|_{\mathcal M}=O(1/k)$ if
$\alpha=2$ or $\mathtt{acc}=0$; otherwise, when
$\alpha>2$ and $\mathtt{acc}=1$,
$\|w^k-\bar w^k\|_{\mathcal M}=o(1/k)$.
\end{proposition}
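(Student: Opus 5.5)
The plan is to reduce the tail of the algorithm to known fixed-point iterations of an $\mathcal M$-nonexpansive operator and then invoke their convergence theory. By Lemma~\ref{unchanged}, $d^k=\bar d$ for $k>\hat K$, so the algorithm from then on works on the convex problem~\eqref{convex_pb}. By Lemma~\ref{existence_convex} this problem has a minimizer. A standard constraint qualification holds since $f$ and $c$ are finite-valued and $h$ is polyhedral. Hence a KKT point exists, that is, $\mathcal T^{-1}(\mathbf 0)=\operatorname{Fix}(\widehat{\mathcal T})\neq\emptyset$.

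By Lemma~\ref{property_T_M}, $\mathcal T$ is maximally monotone and $\mathcal M\succeq0$ with closed range. Following \cite{Sun2025Ac}, I would show that $\widehat{\mathcal T}=(\mathcal T+\mathcal M)^{-1}\mathcal M$ is firmly nonexpansive in the degenerate metric. Concretely, for $w,w'$, monotonicity of $\mathcal T$ applied to $\mathcal M(w-\widehat{\mathcal T}w)\in\mathcal T\widehat{\mathcal T}w$ gives
\[
\|\widehat{\mathcal T}w-\widehat{\mathcal T}w'\|_{\mathcal M}^2\le\langle \widehat{\mathcal T}w-\widehat{\mathcal T}w',w-w'\rangle_{\mathcal M}.
\]
Consequently $\widehat{\mathcal F}_{\hat t}$ is $\mathcal M$-nonexpansive for $\hat t\in(0,2]$, with $\operatorname{Fix}(\widehat{\mathcal F}_{\hat t})=\operatorname{Fix}(\widehat{\mathcal T})$.

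Next I would use Lemma~\ref{equivalent} and restart the index at $\hat K+1$, treating $w^{\hat K+1}$ as the new initial point. Note that \eqref{equivalent_rep2} anchors at $w^0$, which is harmless for Halpern. In the case $\alpha=2$ or $\mathtt{acc}=0$, the recursion is the Halpern iteration \eqref{equivalent_rep2} with anchor $w^0$. I would apply the standard Halpern estimate to the seminorm quotient, working on $\mathbb W/\ker\mathcal M$ where $\mathcal M$ induces a genuine inner product. This gives $\|w^k-\widehat{\mathcal F}_{\hat t}w^k\|_{\mathcal M}\le C\|w^0-w^*\|_{\mathcal M}/k$. Since $w^k-\widehat{\mathcal F}_{\hat t}w^k=\hat t(w^k-\bar w^k)$, this yields $\|w^k-\bar w^k\|_{\mathcal M}=O(1/k)$. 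It also gives convergence of $\{\mathcal Mw^k\}$ to $\mathcal M$ applied to the projection of $w^0$ onto the fixed set. In the case $\alpha>2$ and $\mathtt{acc}=1$, the recursion is \eqref{equivalent_rep1}, which is exactly the fast KM scheme of \cite{bot2023fast}. Its analysis, via a Lyapunov energy decreasing in $k$ together with an Opial-type argument, yields convergence of $\mathcal Mw^k$ and $\|w^k-\widehat{\mathcal F}_{\hat t}w^k\|_{\mathcal M}=o(1/k)$, hence $o(1/k)$ for $\|w^k-\bar w^k\|_{\mathcal M}$.

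The main obstacle is upgrading convergence in the seminorm to convergence of $\bar w^k$ itself in $\mathbb W$, because $\mathcal M$ is degenerate in the $(p,q)$ block. I would use that $(\mathcal T+\mathcal M)^{-1}$ is single-valued and Lipschitz (Lemma~\ref{property_T_M}). Since $\bar w^k=(\mathcal T+\mathcal M)^{-1}(\mathcal Mw^k)$, it is a Lipschitz function of $\mathcal Mw^k$. Therefore $\mathcal Mw^k\to \mathcal M\tilde w$ gives $\bar w^k\to w^*:=(\mathcal T+\mathcal M)^{-1}\mathcal M\tilde w$. Then $\mathcal M(w^k-\bar w^k)\to0$ implies $\mathcal M\tilde w=\mathcal Mw^*$, so $w^*=\widehat{\mathcal T}w^*$, i.e.\ $w^*\in\mathcal T^{-1}(\mathbf 0)$. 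The delicate point is establishing convergence of $\mathcal Mw^k$, not just boundedness. For this I would use the closed-range property to decompose $\mathcal M=\mathcal M^{1/2}\mathcal M^{1/2}$ and run the Halpern or fast KM convergence arguments on $v^k=\mathcal M^{1/2}w^k$. The induced operator on $v$ is well defined because $\widehat{\mathcal T}$ depends on $w$ only through $\mathcal Mw$. Convergence of $v^k$ then gives convergence of $\mathcal Mw^k$.
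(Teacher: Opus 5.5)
Your route is the paper's. Your $\mathcal M^{1/2}w^k$ plays the role of the paper's shadow sequence $\xi^k=C^\top w^k$ with $\mathcal M=CC^\top$. You then use Halpern theory (Xu for convergence, Lieder for the rate) or the fast KM analysis in the two regimes, and lift back to $\bar w^k=(\mathcal T+\mathcal M)^{-1}\mathcal M w^k$ through the Lipschitz continuity of $(\mathcal T+\mathcal M)^{-1}$. Your identification of the limit as a fixed point of $\widehat{\mathcal T}$ and your justification that $\mathcal T^{-1}(\mathbf 0)\neq\emptyset$ are fine.

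The step that fails as written is the Halpern rate $\|w^k-\widehat{\mathcal F}_{\hat t}w^k\|_{\mathcal M}\le C\|w^0-w^*\|_{\mathcal M}/k$. That is Lieder's bound, which requires the \emph{entire} sequence to be the Halpern iteration of one nonexpansive map, started at the anchor, with weights $1/(k+2)$. Here the iterates up to $\hat K$ are produced with varying $d^k$ and possibly accelerated or rejected steps, so they are not Halpern iterates of $\widehat{\mathcal F}_{\hat t}$. After re-indexing at $\hat K+1$, the starting point $w^{\hat K+1}$ is not the anchor $w^0$, and the weights are shifted.

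The fixed anchor is harmless for convergence, since Xu's theorem allows any starting point, but not for the constant. If $w^0$ happens to be a solution while $w^{\hat K+1}$ is not, your right-hand side is zero although the residual need not vanish. For example, take $T=\frac{1}{2}I$ on $\mathbb R$ with anchor $0$, started at $M\neq 0$: the iterates never reach $0$, so the residual stays positive.

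The missing ingredient is the comparison argument the paper uses. Let $v^k$ be the genuine Halpern sequence of the induced operator started at $v^0=\mathcal M^{1/2}w^0$. For $k>\hat K$ both recursions share the anchor, so nonexpansiveness gives $\|\mathcal M^{1/2}w^{k+1}-v^{k+1}\|\le\frac{k+1}{k+2}\|\mathcal M^{1/2}w^k-v^k\|$. This telescopes to $\frac{\hat K+2}{k+1}\|\mathcal M^{1/2}w^{\hat K+1}-v^{\hat K+1}\|$. Combining it with Lieder's bound for $v^k$ and the triangle inequality (using nonexpansiveness once more) yields $\|w^k-\widehat{\mathcal F}_{\hat t}w^k\|_{\mathcal M}=O(1/k)$. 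The constant depends on $\hat K$ and on the discrepancy at $\hat K+1$, not only on $\|w^0-w^*\|_{\mathcal M}$.

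In the fast KM regime the analogous point is only that the Lyapunov analysis must be started at index $\hat K+1$ from the state there. That goes through routinely, and it is all the paper itself asserts.
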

\begin{proof}
For all $k>\hat K$, we first consider the cases where \(\alpha=2\) or $\mathtt{acc}=0$, which implies that $w^k$ is generated by \eqref{equivalent_rep2}. Since $\mathcal M$ is symmetric positive semidefinite by Lemma~\ref{property_T_M}, there exists a full column rank matrix $C$ such that $\mathcal M=CC^\top$. We define the shadow sequence $\xi^k=C^ \top w^k$, and introduce operators $\widetilde{T}=C^ \top (\mathcal T+\mathcal M)^{-1} C$ and $\widetilde{\mathcal{F}}_{\hat t}=(1-\hat t)I+\hat t\widetilde{T}$. Then $\widetilde{\mathcal{F}}_{\hat t}$ is nonexpansive by \cite[Proposition~2.5]{Sun2025Ac}, and $\xi^{k+1}=\xi^0/(k+2)+(k+1)\widetilde{\mathcal F}_{\hat t} \xi^k/(k+2)$. Then using \cite[Theorem~2.3]{xu2002another}, we obtain \(\xi^k\to \xi^*\in\operatorname{Fix}(\widetilde{\mathcal F}_{\hat t})\). Set $w^*=(\mathcal T+\mathcal M)^{-1} C\xi^*$. Then, the Lipschitz continuity of $(\mathcal T+\mathcal M)^{-1}$
from Lemma~\ref{property_T_M} yields
$\bar w^k=(\mathcal T+\mathcal M)^{-1}C\xi^k\to w^*$.
Moreover, $w^*\in\operatorname{Fix}(\widehat{\mathcal T})$.

Let \(v^0=\xi^0\) and \(v^{k+1}=v^0/(k+2)+(k+1)\widetilde{\mathcal F}_{\hat t} v^k/(k+2)\) for $k\geq 0$. According to \cite[Theorem~2.1]{lieder2021convergence}, we have \(\|v^k-\widetilde{\mathcal F}_{\hat t} v^k\|=O(1/k)\). For \(k>\hat K\), using the nonexpansiveness of \(\widetilde{\mathcal F}_{\hat t}\), it holds that \(\|\xi^{k+1}-v^{k+1}\|\leq (k+1)\|\xi^k-v^k\|/(k+2)\), and hence \(\|\xi^k-v^k\|\leq (\hat K+2)\|\xi^{\hat K+1}-v^{\hat K+1}\|/(k+1)=O(1/k)\). Therefore, for \(k>\hat K\), 
\(
\|\xi^k-\widetilde{\mathcal F}_{\hat t} \xi^k\|=\|\xi^k-v^k+v^k-\widetilde{\mathcal F}_{\hat t} v^k+\widetilde{\mathcal F}_{\hat t} v^k-\widetilde{\mathcal F}_{\hat t} \xi^k\|
\leq2\|\xi^k-v^k\|+\|v^k-\widetilde{\mathcal F}_{\hat t} v^k\|
=O(1/k),
\)
which implies that $\|\hat t(w^k-\bar w^k)\|_{\mathcal M}=\|w^k-\widehat{\mathcal F}_{\hat t}w^k\|_{\mathcal M}
=\|C^\top w^k-C^\top\widehat{\mathcal F}_{\hat t}w^k\|=\|\xi^k-\widetilde{\mathcal F}_{\hat t} \xi^k\|
=O(1/k)$.

If \(\alpha>2\) and $\mathtt{acc}=1$, $w^{k}$ is generated by \eqref{equivalent_rep1} for all $k> \hat K$. Lemma \ref{existence_convex} implies $\mathcal T^{-1}(\mathbf{0})\neq \emptyset$. Using sequence $\{\xi^k\}$, the conclusions can be established by an analysis similar to that used in the proof of \cite[Theorem~2.7 and Proposition~2.9]{Sun2025Ac}. 
\end{proof}

Based on the preceding proposition, we next show that  Algorithm~\ref{alg:prox_admm} converges to a weak $d$-stationary point, and establish the convergence rates of the objective function values and the residuals.

\begin{theorem}\label{th1}
Under Assumptions~\ref{asn_1} and~\ref{assumption_4}, the sequences $\{(\bar x^k,\bar y^k)\}$ and $\{(x^k,y^k)\}$
converge to a common weak $d$-stationary point of
problem~\eqref{rp1}. Let $\hat K$ and $\bar{d}$ be as in Lemma~\ref{unchanged}, and let \(w^*=(p^*,q^*,u^*,\eta^*,\mu^*)\in\mathcal T^{-1}(\mathbf{0})\) be the limit point of \(\{\bar w^k\}\). For all $k>\hat K$, if either $\alpha=2$ or
$\mathtt{acc}=0$, then
\[
\left\{
\begin{aligned}
&\left|
f(\bar p^k)+\lambda_p\|\bar q^k\|
+\lambda_0\Phi(B_3\bar u^k)
-\bigl[f(p^*)+\lambda_p\|q^*\|
+\lambda_0\Phi(B_3u^*)\bigr]
\right|
=O(1/k),\\
&\|R(\bar w^k)\|=O(1/k);
\end{aligned}
\right.
\]
otherwise, the above two estimates improve to $o(1/k)$, where
\[
R(\bar w^k):=
\begin{pmatrix}
\bar p^k-\operatorname{prox}_f(\bar p^k+\bar\eta^k)\\
\bar q^k-\operatorname{prox}_{\lambda_p\|\cdot\|}
(\bar q^k+\bar\mu^k)\\
\bar u^k-\operatorname{prox}_{(\lambda_0\Phi^{\bar d}\circ B_3)/\beta}
\big(\bar u^k-(B_1^\top\bar\eta^k+B_2^\top\bar\mu^k)/\beta\big)\\
B_1\bar u^k-\bar p^k\\
B_2\bar u^k-\bar q^k
\end{pmatrix}.
\]

\end{theorem}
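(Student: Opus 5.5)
The plan has three parts. First, convergence of $\{(\bar x^k,\bar y^k)\}$ and $\{(x^k,y^k)\}$ to a common limit. Second, identification of that limit as a weak $d$-stationary point of \eqref{rp1}. Third, the objective-gap and residual rates, derived from the rate in Proposition~\ref{convergence}.

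\textbf{Convergence.} Proposition~\ref{convergence} already gives $\bar w^k\to w^*\in\mathcal T^{-1}(\mathbf 0)$. To show $u^k\to u^*$, I use the update formulas of Lemma~\ref{equivalent}.
\begin{itemize}
\item In the Halpern regime \eqref{equivalent_rep2}, $w^{k+1}-\bar w^k=(w^0-\bar w^k)/(k+2)\to\mathbf 0$.
\item In the fast KM regime \eqref{equivalent_rep1}, I would invoke the corresponding result of \cite{Sun2025Ac}, namely $\|w^{k+1}-w^k\|\to0$ and $\|w^k-\widehat{\mathcal F}_{\hat t}w^k\|\to 0$ along the $\mathcal M$-seminorm. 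Then, because $\mathcal M$ has the block $\begin{pmatrix}\beta I&B_1^\top&B_2^\top\\ B_1&I/\rho&0\\ B_2&0&I/\rho\end{pmatrix}$, which is positive definite under the step-size condition, I convert this to convergence of the $(u,\eta,\mu)$ components.
\end{itemize}
In both cases $u^k\to u^*$.

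\textbf{Stationarity of the limit.} Write $u^*=(x^*,y^*)$. From \eqref{KKT_convex} we have $\eta^*\in\partial f(Ax^*)$, $\mu^*\in\lambda_p\partial\|Dx^*-y^*\|$, and $\mathbf 0\in\partial h(u^*)+B_1^\top\eta^*+B_2^\top\mu^*$. The last inclusion reads $A^\top\eta^*+D^\top\mu^*=\mathbf 0$ together with $\mathbf 0\in\lambda_0\partial\Phi^{\bar d}(y^*)-\mu^*$. Expanding $\partial\Phi^{\bar d}$ reproduces exactly the form of \eqref{lf}, with $\hat d=\bar d$, and with $\partial g$ given by \eqref{w2}.

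The remaining requirement is $\bar d\in\mathcal D^{y^*}$, i.e.\ $\bar d_i\in\mathcal D(y_i^*)$. For $k>\hat K$ we have $d^{k+1}=d^{\bar y^k}=\bar d$, so by \eqref{lifi}:
\begin{itemize}
\item $\bar d_i=1$ forces $|\bar y_i^k|<\nu$, hence $|y_i^*|\le\nu$ and $1\in\mathcal D(y_i^*)$;
\item $\bar d_i=2$ forces $\bar y_i^k\ge\nu$, hence $y_i^*\ge\nu$ and $2\in\mathcal D(y_i^*)$;
\item $\bar d_i=3$ is symmetric.
\end{itemize}
The limit is therefore a weak $d$-stationary point.

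\textbf{Rates.} By Lemma~\ref{property_T_M}, $(\mathcal T+\mathcal M)^{-1}$ is Lipschitz. Combining this with \eqref{optimality_conditions} and the identity $\bar w^k-\mathcal M$-related terms, I bound the residual by $\|R(\bar w^k)\|\le c\|w^k-\bar w^k\|_{\mathcal M}$. The argument runs componentwise:
\begin{itemize}
\item The last two components equal $(\eta^k-\bar\eta^k)/\rho$ and $(\mu^k-\bar\mu^k)/\rho$ by \eqref{eq:eta_update} and \eqref{eq:mu_update}.
\item The first two components vanish, since $\bar\eta^k\in\partial f(\bar p^k)$ and $\bar\mu^k\in\partial c(\bar q^k)$.
\item For the third component, \eqref{optimality_conditions} shows that $\bar u^k$ is the prox at a point perturbed by $\beta(\bar u^k-u^k)+B_1^\top(\bar\eta^k-\eta^k)+B_2^\top(\bar\mu^k-\mu^k)=$ (the $u$-row of $\mathcal M(\bar w^k-w^k)$). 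Nonexpansiveness of the prox then bounds it by $\|\mathcal M(w^k-\bar w^k)\|\le\|\mathcal M\|^{1/2}\|w^k-\bar w^k\|_{\mathcal M}$.
\end{itemize}
So $\|R(\bar w^k)\|=O(1/k)$ or $o(1/k)$, matching Proposition~\ref{convergence}.

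For the objective gap, let $\mathcal G(p,q,u)=f(p)+c(q)+h(u)$. For large $k$ I first replace $\Phi$ by $\Phi^{\bar d}$ at both $\bar u^k$ and $u^*$. This needs care: by the sign and lower-bound arguments of Lemma~\ref{unchanged}, $\Phi^{\bar d}$ and $\Phi$ coincide at $\bar y^k$ whenever $\bar d=d^{\bar y^k}$, which is exactly what \eqref{lifi} encodes. The same holds at the limit, since $\bar d\in\mathcal D^{y^*}$.

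Then I use convexity and the KKT subgradients in both directions. The lower bound is
\[
\mathcal G(\bar w^k)-\mathcal G(w^*)\ge\langle\eta^*,\bar p^k-p^*\rangle+\langle\mu^*,\bar q^k-q^*\rangle-\langle B_1^\top\eta^*+B_2^\top\mu^*,\bar u^k-u^*\rangle,
\]
which equals $-\langle\eta^*,B_1\bar u^k-\bar p^k\rangle-\langle\mu^*,B_2\bar u^k-\bar q^k\rangle$. The upper bound uses the subgradients $(\bar\eta^k,\bar\mu^k,\cdot)$ from \eqref{optimality_conditions}. It is bounded by the constraint violation and the $\mathcal M$-residual, times $\|\bar w^k-w^*\|$, which is bounded because $\bar w^k$ converges. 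Both sides are thus $O(\|w^k-\bar w^k\|_{\mathcal M})$, which gives the stated rates.

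\textbf{Main obstacle.} I expect the hardest step to be the fast KM case. There one must extract convergence of the full iterate $w^k$, not just of its shadow $C^\top w^k$, and verify that the $o(1/k)$ estimate of \cite{Sun2025Ac} transfers to $\|w^k-\bar w^k\|_{\mathcal M}$ through the factorization $\mathcal M=CC^\top$.
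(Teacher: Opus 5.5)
Your proposal is correct and follows essentially the paper's route. You get $\bar d\in\mathcal D^{y^*}$ by passing to the limit in \eqref{lifi}, and $u^k\to u^*$ from Proposition~\ref{convergence} together with positive definiteness of the $(u,\eta,\mu)$-block of $\mathcal M$; that argument covers both regimes at once, so the ``main obstacle'' you flag never arises and your Halpern-specific formula, valid only when $\hat t=1$, is unnecessary. The gap is handled by two-sided convexity bounds, where your direct subgradient inequality at $\bar w^k$ is equivalent to the paper's Fenchel-equality argument.

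One small correction: the last two components of $R(\bar w^k)$ are $(\bar\eta^k-\eta^k)/\rho+B_1(\bar u^k-u^k)$ and $(\bar\mu^k-\mu^k)/\rho+B_2(\bar u^k-u^k)$, not just the multiplier differences. The extra terms are again $O(\|w^k-\bar w^k\|_{\mathcal M})$, so the rates are unaffected.
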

\begin{proof}
It follows from Lemma~\ref{unchanged} that \(\mathcal{D}^{\bar y^k}\ni d^{k+1} =\bar d \) for all \(k>\hat K\), which implies
\(h(\bar u^k)=\lambda_0\Phi(\bar y^k)\). By Proposition~\ref{convergence}, we have $\bar w^k\to w^*$.
Since \(w^*\in\mathcal T^{-1}(\mathbf{0})\) with \(u^*=(({x^*})^\top,({y^*})^\top)^\top\), we have
\begin{equation}\label{min_F}
(x^*,y^*)\in\arg\min_{x,y} 
f(Ax)+\lambda_p\|Dx-y\|+\lambda_0\Phi^{\bar d}(y).
\end{equation}
Since $d^{k+1}=d^{\bar{y}^{k}}$ and \(\bar y^k\to y^*\), we have \(\bar d\in \mathcal{D}^{y^*}\), and hence
\(h(u^*)=\lambda_0\Phi(y^*)\). Then, the first-order optimality conditions for~\eqref{min_F} imply that \((x^*,y^*)\) is a weak \(d\)-stationary point of~\eqref{rp1}.

Using \(\bar w^k\to w^*\) again, there exists a constant \(M_{\bar w}>0\) such that \(\|\bar w^k\|\le M_{\bar w}\) for all \(k\geq 0\). We first consider the case $\alpha=2$ or $\mathtt{acc}=0$. Based on Proposition~\ref{convergence}, for all \(k>\hat K\), we have
\begin{equation}\label{rate_1}
\|w^k - \bar{w}^{k}\|_{\mathcal{M}}= O(1/k).
\end{equation}
Since the submatrix of $\mathcal{M}$ corresponding to the components $u$, $\eta$, and $\mu$ is positive definite under $\beta>\rho$ and $\beta I-\rho A^\top A-\rho\beta D^\top D/(\beta-\rho)\succ0$, \eqref{rate_1} implies that there exists a constant $C>0$ such that
\begin{equation}\label{rate_2}
\|u^k - \bar{u}^k\| + \|\eta^k - \bar{\eta}^k\| + \|\mu^k - \bar{\mu}^k\| \leq \frac{C}{k}, \quad \forall\, k > \hat K.
\end{equation}
It further follows that $(x^k,y^k) \to (x^*,y^*)$.
For brevity, let $r_1^k:=B_1u^k-\bar p^k$, $r_2^k:=B_2u^k-\bar q^k$ and $e^k:=\bar u^k-u^k$. 
Combining~\eqref{eq:eta_update}, \eqref{eq:mu_update} and \eqref{rate_2}, we have
\begin{equation}\label{ad3}
\|e^k\|\leq \frac{C}{k}\quad\mbox{and}\quad \|r_1^k\|+\|r_2^k\|\leq \frac{ C}{ \rho k},\quad k>\hat K.
\end{equation}

We first estimate the objective gap. Using \(B_1u^*=p^*\), \(B_2u^*=q^*\), and saddle-point inequality \(\mathcal L(p^*,q^*,u^*;\eta^*,\mu^*)\leq\mathcal L(\bar p^k,\bar q^k,\bar u^k;\eta^*,\mu^*)\), we obtain
\begin{equation}\label{lower_bound}
\begin{aligned}
&f(\bar p^k)+c(\bar q^k)+h(\bar u^k)
-\bigl(f(p^*)+c(q^*)+h(u^*)\bigr)\\
&\quad\geq-\langle\eta^*,r_1^k+B_1e^k\rangle
-\langle\mu^*,r_2^k+B_2e^k\rangle\\
&\quad\geq-\left[
\left(\frac1\rho+\|B_1\|\right)\|\eta^*\|
+\left(\frac1\rho+\|B_2\|\right)\|\mu^*\|
\right]\frac{C}{k}.
\end{aligned}
\end{equation}

Define
\(
\omega^k:=-B_1^\top(\bar\eta^k+\rho r_1^k)-B_2^\top(\bar\mu^k+\rho r_2^k)-\beta e^k.
\)
Combining \eqref{eq:eta_update}, \eqref{eq:mu_update} and \eqref{optimality_conditions}, we obtain
\begin{equation}\label{optimality_conditions_2}
\bar\eta^k\in\partial f(\bar p^k),\qquad
\bar\mu^k\in\partial c(\bar q^k),\qquad
\omega^k\in\partial h(\bar u^k).
\end{equation}
Applying Fenchel's equality \cite[Theorem~23.5]{RocC} to \eqref{optimality_conditions_2} and rearranging the terms, we obtain
\begin{equation}\label{upper_bound_1}
\begin{aligned}
&f^*(\bar\eta^k)+c^*(\bar\mu^k)+h^*(\omega^k)\\
&=-f(\bar p^k)-c(\bar q^k)-h(\bar u^k)
-\langle\bar\eta^k,r_1^k+B_1e^k\rangle
-\langle\bar\mu^k,r_2^k+B_2e^k\rangle\\
&\quad-\rho\langle B_1\bar u^k,r_1^k\rangle
-\rho\langle B_2\bar u^k,r_2^k\rangle
-\beta\langle\bar u^k,e^k\rangle.
\end{aligned}
\end{equation}
By $w^*\in\mathcal T^{-1}(\mathbf{0})$, we have
$p^*\in\partial f^*(\eta^*)$, $q^*\in\partial c^*(\mu^*)$ and $u^*\in\partial h^*(\omega^*)$, where $\omega^*=-B_1^\top\eta^*-B_2^\top\mu^*$. Using the convexity of the conjugate functions, it holds that
\begin{equation}\label{conjugate_lower}
\begin{aligned}
f^*(\bar\eta^k)+c^*(\bar\mu^k)+h^*(\omega^k)
&\geq f^*(\eta^*)+\langle p^*, \bar\eta^k-\eta^*\rangle+c^*(\mu^*)+\langle q^*, \bar\mu^k-\mu^* \rangle \\
&\quad+h^*(\omega^*)+\langle u^*, \omega^k-\omega^* \rangle\\
&=-\bigl(f(p^*)+c(q^*)+h(u^*)\bigr)\\
&\quad+\left\langle u^*,-\rho B_1^\top r_1^k-\rho B_2^\top r_2^k-\beta e^k\right\rangle,
\end{aligned}
\end{equation}
where the equality uses $p^*=B_1u^*$, $q^*=B_2u^*$ and $f(p^*) + c(q^*) + h(u^*)
= -\big(f^*(\eta^*) + c^*(\mu^*) + h^*(\omega^*)\big)$.
Combining~\eqref{upper_bound_1} and \eqref{conjugate_lower}, and then using $\|\bar w^k\|\leq M_{\bar w}$ and \eqref{ad3}, we obtain
\begin{equation*}
\begin{aligned}
&f(\bar p^k)+c(\bar q^k)+h(\bar u^k)
-\bigl(f(p^*)+c(q^*)+h(u^*)\bigr)\\
&\leq
-\langle\bar\eta^k,r_1^k+B_1e^k\rangle
-\langle\bar\mu^k,r_2^k+B_2e^k\rangle
-\rho\langle B_1\bar u^k,r_1^k\rangle
-\rho\langle B_2\bar u^k,r_2^k\rangle
-\beta\langle\bar u^k,e^k\rangle\\
&\quad+\left\langle
u^*,\rho B_1^\top r_1^k+\rho B_2^\top r_2^k+\beta e^k
\right\rangle\\
&\leq
\left[
M_{\bar w}\left(
\frac{1}{\rho}
+2(\|B_1\|+\|B_2\|)
+\beta
\right)
+\|u^*\|\bigl(
\|B_1\|+\|B_2\|+\beta
\bigr)
\right]\frac{C}{k},
\qquad k>\hat K.
\end{aligned}
\end{equation*}
This together with~\eqref{lower_bound} yields the $O(1/k)$ objective-gap estimate.

We next establish the $O(1/k)$ convergence rate for the KKT residual. By~\eqref{optimality_conditions_2}, it holds that
\[
\bar p^k-\operatorname{prox}_f(\bar p^k+\bar\eta^k)=\mathbf{0},
\qquad
\bar q^k-\operatorname{prox}_{c}(\bar q^k+\bar\mu^k)=\mathbf{0}.
\]
The two primal feasibility components satisfy
\begin{equation}\label{primal_residual_bounds}
\left\{
\begin{aligned}
\|B_1\bar u^k-\bar p^k\|&\leq\|r_1^k+B_1e^k\|\leq\left(\frac1\rho+\|B_1\|\right)\frac{C}{k},\\
\|B_2\bar u^k-\bar q^k\|&\leq \|r_2^k+B_2e^k\|\leq\left(\frac1\rho+\|B_2\|\right)\frac{C}{k},
\end{aligned}
\qquad k>\hat K.\right.
\end{equation}
From \eqref{adu} and the nonexpansiveness of the proximal operator $\operatorname{prox}_{h/\beta}$, we have
\begin{equation}\label{u_res_estimate}
\begin{aligned}
&\left\| \bar{u}^k - \operatorname{prox}_{h/\beta} \left( \bar{u}^k - \frac{1}{\beta}(B_1^{\top} \bar{\eta}^k + B_2^{\top} \bar{\mu}^k) \right) \right\| \\
&\leq \left\| u^k- \bar{u}^k-\frac{1}{\beta}\rho B_1^{\top}r_1^k-\frac{1}{\beta}\rho B_2^{\top}r_2^k\right\|\\
&\leq\|e^k\|+\frac{\rho}{\beta}\bigl(\|B_1\|\|r_1^k\|+\|B_2\|\|r_2^k\|\bigr)
\leq\left(1+\frac{\|B_1\|+\|B_2\|}{\beta}\right)\frac{C}{k},\qquad k>\hat K.
\end{aligned}
\end{equation}
Thus, $\|R(\bar w^k)\|=O(1/k)$. If $\alpha>2$ and $\mathtt{acc}=1$, we have $\|w^k-\bar w^{k}\|_{\mathcal M}=o(1/k)$ from Proposition~\ref{convergence}. The same estimates as above, with $o(1/k)$ replacing $ O(1/k)$, yield the improved rates.
\end{proof}

\begin{remark}\label{remark:convex_extension}
The full row rank condition on $D$ is not required in
Theorem~\ref{th1}. If $D$ has full row rank, then the limit point
of $\{\bar x^k\}$ (or $\{x^k\}$) is also a
$\nu$-strong local minimizer of problem \eqref{ob0}.
When directly applied to solving the convex problem~\eqref{convex_pb}, Algorithm~\ref{alg:prox_admm} can be simplified without requiring the index identification and safeguarding steps, and reduces to the fixed-point iteration~\eqref{equivalent_rep1}. In this case, the KKT residual and the objective function value gap converge at rates \( O(1/k)\) for \(\alpha=2\) and \(o(1/k)\) for \(\alpha>2\). The above convergence analysis further suggests that this simplified fixed-point framework can be extended to the more general convex composite problem 
\(\min_x\sum_{i=1}^{N}f_i(B_i x)+h(x)\), where \(f_i\) and \(h\) are proper closed convex functions with computable proximal mappings, and each $B_i \in \mathbb{R}^{r \times n}$, through the corresponding construction of \(\mathcal T\) and \(\mathcal M\).
\end{remark}

\section{Numerical experiments}\label{sec:experiments}

We conduct two numerical experiments to evaluate
Algorithm~\ref{alg:prox_admm}. The first concerns one-dimensional
piecewise-constant signal denoising, for which the first-order
difference matrix has full row rank.
The second concerns total variation denoising on the MNIST dataset,
for which the two-dimensional difference matrix does not have full row rank. 
All experiments were run in Python on a Windows~11 PC with a
13th-generation Intel Core i5 processor and 16~GB of RAM. The parameter settings for the compared algorithms are provided in the publicly available code (see the Code availability statement).

\subsection{Piecewise-constant signal denoising}\label{Signal}

We recover a piecewise-constant signal \(b\in\mathbb{R}^n\)
from a noisy observation \(\hat b\in\mathbb{R}^n\).
Table~\ref{tab:models_algorithms} lists the tested models and corresponding
solvers. Let \(D\in\mathbb{R}^{(n-1)\times n}\) be the first-order difference
matrix with \(-1\) on the main diagonal and \(1\) on the
superdiagonal.

\begin{table}[h]
\centering
\footnotesize
\caption{Optimization models and corresponding solvers}
\label{tab:models_algorithms}
\begin{tabular}{@{} l c c @{}}
\toprule
\textbf{Model} & \textbf{Problem formulation} & \textbf{Solver} \\
\midrule
\(\ell_1\)-\(C_{\ell_1}\)
&
$
\min_{x,y}\;
\|x-\hat b\|_1/n
+\lambda_p\|Dx-y\|
+\lambda_0\Phi(y)
$
&
SAP-ADMM\textsuperscript{H}/SAP-ADMM
\\
\midrule
\(\ell_1\)-\(\ell_{1/2}\)
&
$
\min_x\;
\|x-\hat b\|_1/n
+\lambda_{\ell_{1/2}}\|Dx\|_{1/2}^{1/2}
$
&
SDCAM\textsuperscript{1}~\cite{L2019A}
\\
\midrule
\(\ell_1\)-\(\ell_1\)
&
$
\min_{x,y}\;
\|x-\hat b\|_1/n
+\lambda_p\|Dx-y\|
+\lambda_{\ell_1}\|y\|_1
$
&
SAP-ADMM\textsuperscript{1}
\\
\midrule
\(\ell_2\)-\(\ell_{1/2}\)
&
$
\min_x\;
\|x-\hat b\|^2/2
+\lambda_{\ell_{1/2}}\|Dx\|_{1/2}^{1/2}
$
&
SDCAM\textsuperscript{2}~\cite{L2019A}
\\
\midrule
\(\ell_2\)-\(\ell_0\)
&
$
\min_x\;
\|x-\hat b\|^2/2
+\lambda_{\ell_0}\|Dx\|_0
$
&
pADMM~\cite{bot2020The}
\\
\bottomrule
\end{tabular}

\vspace{2pt}
\begin{minipage}{0.98\linewidth}
\footnotesize
\textbf{Note.}
The abbreviation \(\ell_1\)-\(C_{\ell_1}\) denotes the
\(\ell_1\)-capped-\(\ell_1\) model. Motivated by the acceleration observed when incorporating a restart
strategy into SAP-ADMM with $\alpha=2$ and $t=1$, we consider
SAP-ADMM\textsuperscript{H}, the Halpern variant with $\alpha=2$, $t=1$,
and a single restart at iteration $1000$. In comparison, SAP-ADMM
uses $\alpha=15$ and $t=1.5$. SDCAM\textsuperscript{1} and SDCAM\textsuperscript{2}
correspond to the $\ell_1$-$\ell_{1/2}$ and
$\ell_2$-$\ell_{1/2}$ models, respectively.
\end{minipage}
\end{table}

We set \(n=1000\). The segment lengths and signal levels are sampled from \([50,150]\) and \([-5,10]\), respectively, with adjacent levels differing by more than \(2\). The observation \(\hat b\) is generated by adding Gaussian noise with standard deviation \(0.5\) and impulse noise with probability \(\pi \in \{0,0.05,\ldots,0.20\}\). Each corrupted entry is perturbed uniformly on \([-5,5]\). We perform $50$ independent trials for each $\pi$.
Jump-location recovery is measured by the $F_1$ score, which is computed as
\[
F_1
=
\frac{
2|\mathcal{S}_{\mathrm{true}}\cap\mathcal{S}_{\mathrm{est}}|
}{
2|\mathcal{S}_{\mathrm{true}}\cap\mathcal{S}_{\mathrm{est}}|
+
|\mathcal{S}_{\mathrm{est}}\setminus\mathcal{S}_{\mathrm{true}}|
+
|\mathcal{S}_{\mathrm{true}}\setminus\mathcal{S}_{\mathrm{est}}|
}.
\]
Clearly, $F_1=1$ corresponds to exact support recovery.
Set
\(\varepsilon_{\mathrm{supp}}=10^{-6}\), and define the set of true jump locations \(\mathcal{S}_{\mathrm{true}}
=\bigl\{i:|(Db)_i|>\varepsilon_{\mathrm{supp}}\bigr\}\). For SAP-ADMM, SAP-ADMM\textsuperscript{H}, and
SAP-ADMM\textsuperscript{1}, the estimated jump locations are
\(\mathcal{S}_{\mathrm{est}}
=\{i:|\bar y_i|>\varepsilon_{\mathrm{supp}}\}\),
where \(\bar y\) is returned by the corresponding algorithm
at termination.
For the remaining algorithms, \(\mathcal{S}_{\mathrm{est}}\)
consists of the indices for which the absolute value of the
corresponding output component exceeds
\(\varepsilon_{\mathrm{supp}}\).

For the \(\ell_1\)-\(C_{\ell_1}\) model, we set
\(\lambda_0=0.016\) and \(\lambda_p=500L_f\), where
\(L_f=1/\sqrt{n}\), ensuring that Assumption~\ref{asp} holds.
For SAP-ADMM and SAP-ADMM\textsuperscript{H}, we set
\(N_{\mathrm{max}}=500\), \(\rho=2/n\), and
\(\beta=6\rho+10^{-8}\).
The initial points are chosen as \(x^0=\hat b\),
\(y^0=\mathbf{0}\), \(p^0=x^0\), \(q^0=Dx^0-y^0\),
\(\eta^0=\mathbf{0}\), and \(\mu^0=\mathbf{0}\).
To improve support identification, we set \(\nu_0=2\) and update
\[
\nu_k
=\max\left\{
\nu_{k-1}\gamma_\nu,\;
0.99\,\lambda_0/(3\lambda_p)
\right\},
\qquad k\geq1,
\]
where \(\gamma_\nu=0.999\) for \(k\leq1500\) and
\(\gamma_\nu=0.95\) otherwise. Here, \(k\) counts effective
iterations, excluding rejected acceleration trials.
After finitely many iterations, \(\nu_k\) reaches and remains
at \(\nu_{\min}=0.99\,\lambda_0/(3\lambda_p)\), which satisfies
Assumption~\ref{asn_1}.
Define
\[
\Delta_k
:=
\max\bigl\{
\|\bar p^k-p^k\|,
\|\bar q^k-q^k\|,
\|\bar x^k-x^k\|,
\|\bar y^k-y^k\|,
\|\bar\eta^k-\eta^k\|,
\|\bar\mu^k-\mu^k\|
\bigr\}.
\]

Note that $\Delta_k=0$ implies $R(\bar w^k)=\mathbf{0}$.
We terminate SAP-ADMM and SAP-ADMM\textsuperscript{H} when
\(\Delta_k<2\times10^{-4}\), provided that \(\nu_k\) has reached
\(\nu_{\min}\) and at least \(50\) additional iterations
have been performed, or when the iteration count reaches
\(2\times10^4\).
 
\begin{figure}[htbp]
    \centering
    \includegraphics[width=\textwidth]{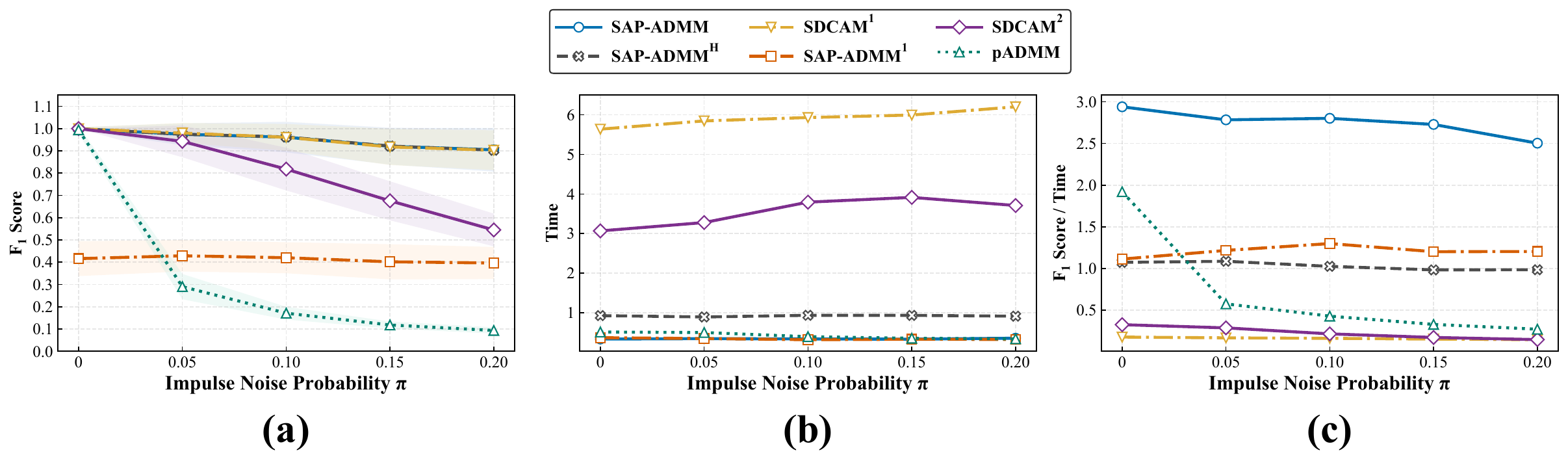}
    \caption{Performance comparison of six algorithms under
    different impulse noise probabilities:
    (a) average \(F_1\) score with standard deviation;
    (b) average computation time;
    (c) efficiency ratio.}
    \label{fig:f1_time_efficiency}
\end{figure}

Fig.~\ref{fig:f1_time_efficiency} reports the average \(F_1\)
score and computation time, as well as the efficiency ratio, defined
as the average \(F_1\) score divided by the average computation time.
The results in Fig.~\ref{fig:f1_time_efficiency}(a) show that
SAP-ADMM and SAP-ADMM\textsuperscript{H} applied to the
\(\ell_1\)-\(C_{\ell_1}\) model, and SDCAM\textsuperscript{1}
applied to the \(\ell_1\)-\(\ell_{1/2}\) model, consistently
attain average \(F_1\) scores above \(0.8\).
In contrast, SAP-ADMM\textsuperscript{1} applied to the
\(\ell_1\)-\(\ell_1\) model is less accurate in identifying jumps.
This comparison illustrates the advantage of nonconvex penalties
for sparse jump recovery.
SDCAM\textsuperscript{2} and pADMM, which solve the
\(\ell_2\)-\(\ell_{1/2}\) and \(\ell_2\)-\(\ell_0\) models,
respectively, perform well without impulse noise but deteriorate
rapidly as \(\pi\) increases, compared with the algorithms
solving models with the \(\ell_1\) loss.
The computation times in Fig.~\ref{fig:f1_time_efficiency}(b) indicate
that pADMM, SAP-ADMM, SAP-ADMM\textsuperscript{H}, and
SAP-ADMM\textsuperscript{1} incur relatively low computational
costs when solving their respective models. Fig.~\ref{fig:f1_time_efficiency}(c) shows that SAP-ADMM
outperforms SAP-ADMM\textsuperscript{H} and all the other
compared algorithms when applied to their respective models in terms of computational efficiency,
indicating the advantage of the safeguarded acceleration strategy.
Specifically, SAP-ADMM achieves a higher efficiency ratio than
SAP-ADMM\textsuperscript{1}, which in turn outperforms
SAP-ADMM\textsuperscript{H}.

\begin{table}[htbp]
\centering
\footnotesize
\caption{Average MSE and computation time for piecewise-constant signal denoising}
\label{tab:mse_time_comparison}
\renewcommand{\arraystretch}{1.10}
\setlength{\tabcolsep}{3.5pt}
\begin{tabular}{@{}lccccc@{}}
\toprule
\multirow{2}{*}{Algorithm}
& \multicolumn{5}{c}{Impulse noise probability \(\pi\)} \\
\cmidrule(lr){2-6}
& \(0.00\) & \(0.05\) & \(0.10\) & \(0.15\) & \(0.20\) \\
\midrule
SAP-ADMM
& \(0.0059/\mathbf{0.3399}\)
& \(\mathbf{0.0114}/\mathbf{0.3502}\)
& \(0.0189/0.3432\)
& \(\mathbf{0.0308}/0.3375\)
& \(0.0374/0.3614\) \\
SAP-ADMM\textsuperscript{H}
& \(0.0057/0.9310\)
& \(0.0115/0.8970\)
& \(\mathbf{0.0187}/0.9386\)
& \(0.0309/0.9373\)
& \(\mathbf{0.0369}/0.9171\) \\
SDCAM\textsuperscript{1}
& \(0.0157/5.6352\)
& \(0.0219/5.8439\)
& \(0.0287/5.9286\)
& \(0.0397/5.9919\)
& \(0.0449/6.1992\) \\
SAP-ADMM\textsuperscript{1}
& \(0.2256/0.3740\)
& \(0.2769/0.3527\)
& \(0.2230/\mathbf{0.3236}\)
& \(0.2075/\mathbf{0.3345}\)
& \(0.2019/\mathbf{0.3295}\) \\
SDCAM\textsuperscript{2}
& \(\mathbf{0.0033}/3.0686\)
& \(0.0174/3.2793\)
& \(0.0423/3.7940\)
& \(0.0810/3.9140\)
& \(0.1394/3.7091\) \\
pADMM
& \(0.0036/0.5178\)
& \(0.3933/0.5048\)
& \(0.7680/0.3998\)
& \(1.1339/0.3598\)
& \(1.5655/0.3453\) \\
\bottomrule
\end{tabular}

\vspace{2pt}
\begin{minipage}{0.96\linewidth}
\footnotesize
\textbf{Note.}
Each entry is reported as MSE/computation time (s).
\end{minipage}
\end{table}

Table~\ref{tab:mse_time_comparison} reports the average mean
squared error (MSE) and computation time.
The MSE is computed as \(\mathrm{MSE}=\|\bar x-b\|^2/n\),
where \(\bar x\) is the restored signal returned by the
corresponding algorithm at termination.
When \(\pi=0\), the \(\ell_2\)-\(\ell_{1/2}\) model
solved by SDCAM\textsuperscript{2} yields the lowest MSE.
For \(\pi\geq0.05\), the lowest MSE values are
obtained from the \(\ell_1\)-\(C_{\ell_1}\) model using
SAP-ADMM or SAP-ADMM\textsuperscript{H}.
Moreover, the \(\ell_1\)-\(C_{\ell_1}\) model solved by
SAP-ADMM and SAP-ADMM\textsuperscript{H}, together with the
\(\ell_1\)-\(\ell_{1/2}\) model solved by
SDCAM\textsuperscript{1}, consistently yields relatively low
MSE values across noise levels, illustrating the effectiveness
of the \(\ell_1\) loss combined with nonconvex penalties.
For the \(\ell_1\)-\(C_{\ell_1}\) model, SAP-ADMM substantially
reduces the computation time relative to SAP-ADMM\textsuperscript{H}.
Hence, we focus on SAP-ADMM
in all subsequent experiments.

\begin{figure}[htbp]
    \centering
    \includegraphics[width=\textwidth]{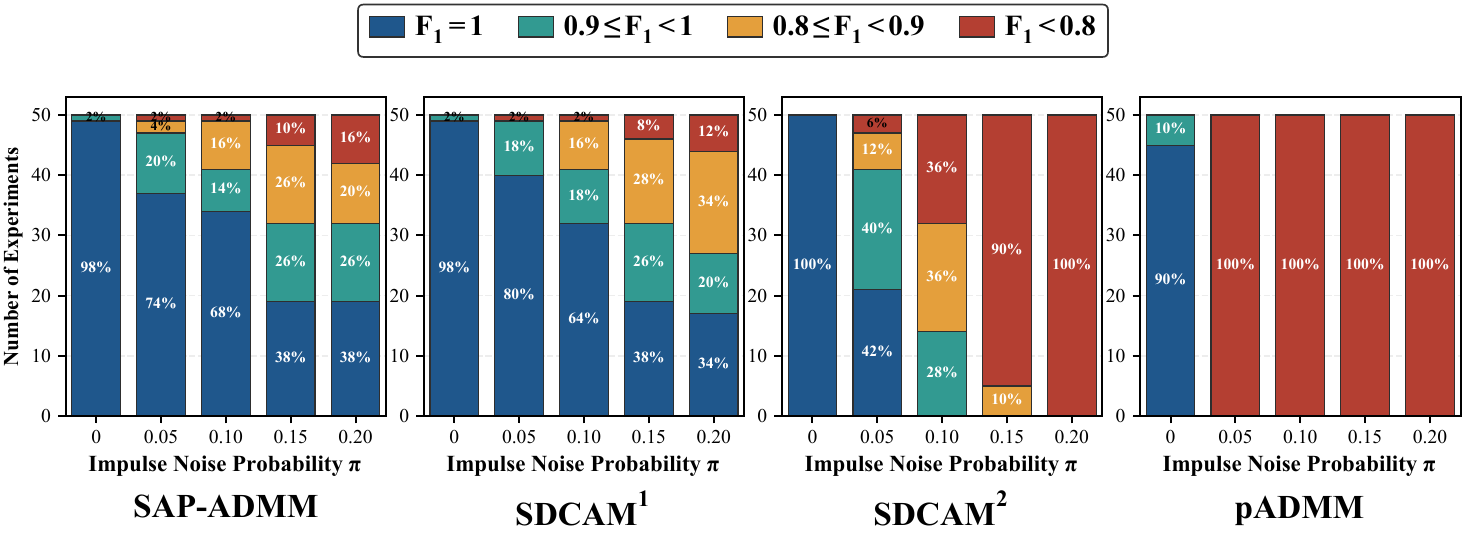}
    \caption{Distribution of \(F_1\) scores under different
    impulse noise probabilities.}
    \label{fig:f1_distribution}
\end{figure}

Fig.~\ref{fig:f1_distribution} presents the distributions
of \(F_1\) scores over \(50\) trials.
SAP-ADMM\textsuperscript{1} applied to the
\(\ell_1\)-\(\ell_1\) model is omitted because it rarely
recovers the exact support.
SAP-ADMM and SDCAM\textsuperscript{1}, applied to the
\(\ell_1\)-\(C_{\ell_1}\) and
\(\ell_1\)-\(\ell_{1/2}\) models, respectively, attain
exact recovery in more than \(70\%\) of the trials for
\(\pi\leq0.05\) and in more than \(30\%\) of the trials
for \(\pi\in\{0.15,0.20\}\).
SDCAM\textsuperscript{2} and pADMM, applied to the
\(\ell_2\)-\(\ell_{1/2}\) and \(\ell_2\)-\(\ell_0\)
models, respectively, attain reliable exact recovery when
\(\pi=0\) but do not recover the exact support in any trial when
\(\pi\in\{0.15,0.20\}\).
For a direct comparison, Fig.~\ref{fig:recovery} shows the
reconstructions obtained in one trial for \(\pi=0\) and \(0.20\),
using \(N_{\mathrm{max}}=2000\) for SAP-ADMM.
When \(\pi=0\), all compared algorithms except
SAP-ADMM\textsuperscript{1} closely recover the clean signal
when solving their respective models.
When \(\pi=0.20\), the \(\ell_1\)-\(C_{\ell_1}\) model
solved by SAP-ADMM and the \(\ell_1\)-\(\ell_{1/2}\) model
solved by SDCAM\textsuperscript{1} yield the most accurate
reconstructions, whereas those obtained from the
\(\ell_2\)-loss models using SDCAM\textsuperscript{2}
and pADMM retain noticeable noise.

\begin{figure}[htbp]
    \centering
    \includegraphics[width=\textwidth]{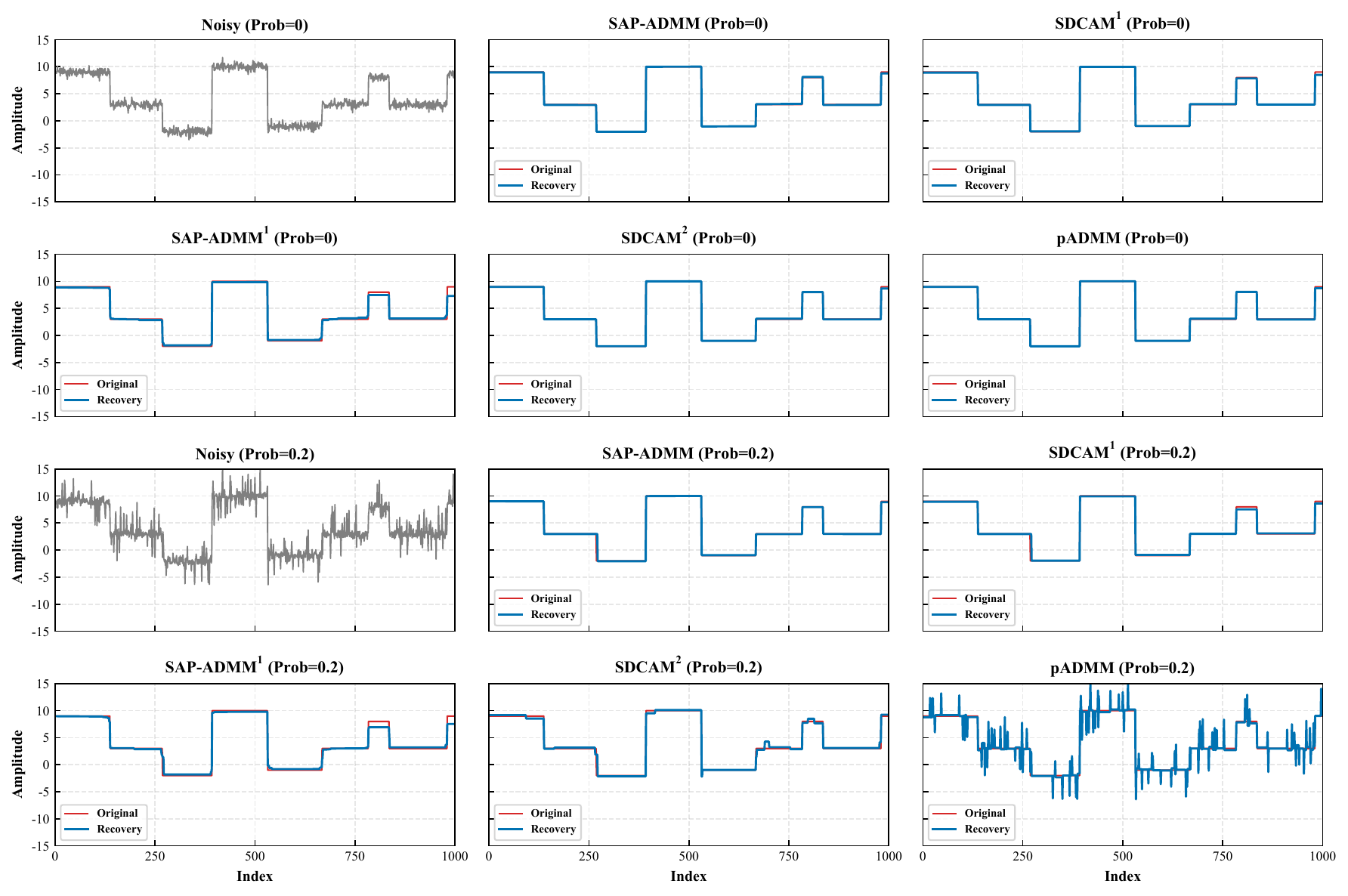}
    \caption{Signal recovery by the compared algorithms applied
    to their respective models under impulse noise probabilities
    of \(0\%\) and \(20\%\).}
    \label{fig:recovery}
\end{figure}

We further evaluate SAP-ADMM on the problem
\(
\min_{x,y}\;
\|Ax-\hat b\|_1/m
+\lambda_p\|Dx-y\|
+\lambda_0\Phi(y),
\)
with \(m=800\) and \(n=1000\).
In each trial, \(A\in\mathbb{R}^{m\times n}\) is generated
with independent standard Gaussian entries and normalized
so that \(\|A\|=1\).
The signal \(b\) and matrix \(D\) are constructed as before.
The observation \(\hat b\) is generated from \(Ab\), with Gaussian standard deviation
\(0.2\) and impulse noise probability \(0.05\).
We set \(\lambda_0=0.0018\), \(\lambda_p=500/\sqrt{m}\),
and \(\rho=0.6/n\), and initialize
\(x^0=A^\top\hat b\) and \(p^0=Ax^0\). Here, \(\nu\) is updated as above, except that
\(\gamma_\nu=0.9995\) for \(k\leq4000\) and
\(\gamma_\nu=0.95\) thereafter.
The remaining parameters, initial points, and stopping
criterion for SAP-ADMM are unchanged.
We perform \(20\) independent trials.
Within each trial, the same problem instance is used for all
\(N_{\mathrm{max}}\in\{200,300,400,500,600\}\).

\begin{table}[htbp]
\centering
\small
\caption{Average results of SAP-ADMM for different values of \(N_{\mathrm{max}}\)}
\label{tab:nmax_general_A}
\renewcommand{\arraystretch}{1.05}
\setlength{\tabcolsep}{10pt}
\begin{tabular}{cccc}
\toprule
\(N_{\mathrm{max}}\)
& Average \(F_1\)
& Average iterations
& Average MSE \\
\midrule
200 & 1.0000 & 8239.95 & 0.0034 \\
300 & 1.0000 & 8239.95 & 0.0034 \\
400 & 1.0000 & 8239.95 & 0.0034 \\
500 & 1.0000 & 8239.95 & 0.0034 \\
600 & 1.0000 & 8239.95 & 0.0034 \\
\bottomrule
\end{tabular}
\end{table}

Table~\ref{tab:nmax_general_A} reports the average \(F_1\)
score, iteration count, and MSE of the SAP-ADMM algorithm over the \(20\) trials.
The average \(F_1\) score and MSE demonstrate the effectiveness
of SAP-ADMM in signal recovery when \(A\) is not an identity matrix.
The reported averages remain unchanged across all tested
values of \(N_{\mathrm{max}}\), indicating that SAP-ADMM
is relatively insensitive to this parameter under the
considered settings.

All these results demonstrate that SAP-ADMM achieves a favorable balance
between reconstruction accuracy and computational efficiency
for piecewise-constant signal denoising.

\subsection{MNIST total variation denoising}

We consider total variation denoising of \(28\times28\)
MNIST images~\cite{LeCun1998}.
Let \(b\in\mathbb{R}^n\), with \(n=n_1n_2\), represent the
columnwise vectorization of a clean \(n_1\times n_2\) image,
and let \(\hat b\) be its noisy observation.
Here, \(D=[D_h^\top,D_v^\top]^\top\), with
\(D_h=L_{n_2}\otimes I_{n_1}\) and
\(D_v=I_{n_2}\otimes L_{n_1}\), where \(\otimes\) denotes the
Kronecker product and \(L_{n_i}\in\mathbb{R}^{n_i\times n_i}\)
is the first-order difference matrix with entries
\((L_{n_i})_{j,j}=-1\) and \((L_{n_i})_{j,j+1}=1\)
for \(j=1,\ldots,n_i-1\), and all other entries equal to zero.
By \(D\in\mathbb{R}^{2n_1n_2\times n_1n_2}\) and
\(\operatorname{rank}(D)\leq n_1n_2<2n_1n_2\), it follows
that \(D\) does not have full row rank.
Since the convergence of pADMM requires \(D\) to
have full row rank, we test all models in Table~\ref{tab:models_algorithms} except the $\ell_2$-$\ell_0$ model solved by pADMM.

For each digit \(0,\ldots,9\), we select one image and generate
\(20\) noisy observations by replacing \(10\%\) of its pixels
with independent samples from the uniform distribution on
\([0,1]\), adding Gaussian noise with standard deviation
\(0.2\), and clipping the resulting values to \([0,1]\).
For the \(\ell_1\)-\(C_{\ell_1}\) model, we set
\(\lambda_0=8\times10^{-5}\) and \(\lambda_p=2L_f\).
For SAP-ADMM, we set \(\rho=0.5/n\) and
\(\beta=10\rho+10^{-8}\), with \(n=28\times28\).
Starting from \(\nu_0=0.1\), we update \(\nu_k\) according to
\[
\nu_k
=
\max\left\{
\nu_0\,0.99^k,\;
0.999\,\lambda_0/(3\lambda_p)
\right\}.
\]
Here, \(\nu_{\min}=0.999\,\lambda_0/(3\lambda_p)\).
The stopping criterion is \(\Delta_k<4\times10^{-4}\), subject
to the same requirement of at least \(50\) additional effective
iterations after \(\nu_k\) reaches \(\nu_{\min}\).
The remaining parameters, initial points, and maximum iteration
number are the same as those used in \cref{Signal}.

We evaluate restoration quality using the peak signal-to-noise
ratio (PSNR) for pixelwise reconstruction accuracy, the structural
similarity index measure (SSIM)~\cite{Wang2004} for structural
similarity, and the gradient magnitude similarity deviation
(GMSD)~\cite{Xue2014} for spatial variation in local gradient
magnitude similarity.
Higher PSNR and SSIM values and lower GMSD values indicate
better restoration quality.

Table~\ref{tab:mnist_four_models_compact} reports the average
PSNR, computation time, SSIM, and GMSD.
The \(\ell_1\)-\(C_{\ell_1}\) model solved by SAP-ADMM
yields the best average PSNR, SSIM, and GMSD for every digit.
Compared with the \(\ell_1\)-\(\ell_1\) model solved by
SAP-ADMM\textsuperscript{1}, it improves average PSNR by
approximately \(3.22\%\), increases SSIM by \(14.45\%\), and
reduces GMSD by \(27.40\%\).
When solving their respective models, SAP-ADMM requires,
on average, more computation time than SAP-ADMM\textsuperscript{1},
but less than SDCAM\textsuperscript{1} and
SDCAM\textsuperscript{2}.

\begin{table}[htbp]
\centering
\caption{Comparison of four algorithms applied to their respective
models for MNIST image denoising}
\label{tab:mnist_four_models_compact}
\scriptsize
\setlength{\tabcolsep}{3.6pt}
\renewcommand{\arraystretch}{1.08}
\begin{tabular}{@{}c l c c c c@{}}
\toprule
Digit & Metric
& SAP-ADMM
& SDCAM\textsuperscript{1}
& SAP-ADMM\textsuperscript{1}
& SDCAM\textsuperscript{2} \\
\midrule
\multirow{2}{*}{0}
& \(P\uparrow/T\downarrow\) & \(\mathbf{18.05}/0.1833\) & \(17.44/3.4761\) & \(17.77/\mathbf{0.0674}\) & \(15.02/1.9773\) \\
& \(S\uparrow/G\downarrow\) & \(\mathbf{0.629}/\mathbf{0.1958}\) & \(0.542/0.2130\) & \(0.561/0.2555\) & \(0.392/0.2457\) \\
\midrule
\multirow{2}{*}{1}
& \(P\uparrow/T\downarrow\) & \(\mathbf{21.25}/0.3365\) & \(19.98/2.6230\) & \(20.48/\mathbf{0.0743}\) & \(16.39/1.8851\) \\
& \(S\uparrow/G\downarrow\) & \(\mathbf{0.484}/\mathbf{0.1491}\) & \(0.372/0.2021\) & \(0.388/0.2781\) & \(0.222/0.2170\) \\
\midrule
\multirow{2}{*}{2}
& \(P\uparrow/T\downarrow\) & \(\mathbf{17.57}/0.1746\) & \(16.89/4.2705\) & \(17.25/\mathbf{0.0656}\) & \(14.68/2.8998\) \\
& \(S\uparrow/G\downarrow\) & \(\mathbf{0.609}/\mathbf{0.2286}\) & \(0.529/0.2351\) & \(0.560/0.2621\) & \(0.397/0.2757\) \\
\midrule
\multirow{2}{*}{3}
& \(P\uparrow/T\downarrow\) & \(\mathbf{18.92}/0.4264\) & \(18.02/4.2294\) & \(18.52/\mathbf{0.0736}\) & \(15.28/2.0205\) \\
& \(S\uparrow/G\downarrow\) & \(\mathbf{0.557}/\mathbf{0.1941}\) & \(0.482/0.2143\) & \(0.497/0.2511\) & \(0.328/0.2573\) \\
\midrule
\multirow{2}{*}{4}
& \(P\uparrow/T\downarrow\) & \(\mathbf{18.92}/0.2767\) & \(18.57/2.8153\) & \(18.12/\mathbf{0.0704}\) & \(15.61/1.7116\) \\
& \(S\uparrow/G\downarrow\) & \(\mathbf{0.435}/\mathbf{0.1981}\) & \(0.383/0.2102\) & \(0.391/0.2728\) & \(0.253/0.2410\) \\
\midrule
\multirow{2}{*}{5}
& \(P\uparrow/T\downarrow\) & \(\mathbf{18.31}/0.4124\) & \(17.47/3.6850\) & \(17.57/\mathbf{0.0681}\) & \(15.19/2.2182\) \\
& \(S\uparrow/G\downarrow\) & \(\mathbf{0.550}/\mathbf{0.2063}\) & \(0.466/0.2192\) & \(0.473/0.2602\) & \(0.327/0.2453\) \\
\midrule
\multirow{2}{*}{6}
& \(P\uparrow/T\downarrow\) & \(\mathbf{19.44}/0.1490\) & \(18.38/3.5776\) & \(18.79/\mathbf{0.0716}\) & \(15.48/1.9762\) \\
& \(S\uparrow/G\downarrow\) & \(\mathbf{0.521}/\mathbf{0.1761}\) & \(0.432/0.2069\) & \(0.458/0.2705\) & \(0.305/0.2314\) \\
\midrule
\multirow{2}{*}{7}
& \(P\uparrow/T\downarrow\) & \(\mathbf{19.19}/0.2851\) & \(18.14/4.4902\) & \(18.35/\mathbf{0.0775}\) & \(15.56/2.4998\) \\
& \(S\uparrow/G\downarrow\) & \(\mathbf{0.531}/\mathbf{0.1824}\) & \(0.424/0.2055\) & \(0.440/0.2654\) & \(0.289/0.2271\) \\
\midrule
\multirow{2}{*}{8}
& \(P\uparrow/T\downarrow\) & \(\mathbf{18.15}/0.3959\) & \(17.26/3.5041\) & \(17.41/\mathbf{0.0677}\) & \(14.66/1.9542\) \\
& \(S\uparrow/G\downarrow\) & \(\mathbf{0.531}/\mathbf{0.1994}\) & \(0.455/0.2204\) & \(0.470/0.2659\) & \(0.323/0.2602\) \\
\midrule
\multirow{2}{*}{9}
& \(P\uparrow/T\downarrow\) & \(\mathbf{18.93}/0.5285\) & \(18.09/4.1470\) & \(18.57/\mathbf{0.0707}\) & \(15.38/2.0600\) \\
& \(S\uparrow/G\downarrow\) & \(\mathbf{0.533}/\mathbf{0.1897}\) & \(0.441/0.2087\) & \(0.463/0.2622\) & \(0.299/0.2404\) \\
\midrule
\multirow{2}{*}{Avg.}
& \(P\uparrow/T\downarrow\) & \(\mathbf{18.87}/0.3168\) & \(18.02/3.6818\) & \(18.28/\mathbf{0.0707}\) & \(15.33/2.1203\) \\
& \(S\uparrow/G\downarrow\) & \(\mathbf{0.538}/\mathbf{0.1920}\) & \(0.453/0.2135\) & \(0.470/0.2644\) & \(0.314/0.2441\) \\
\bottomrule
\end{tabular}

\vspace{2pt}
\begin{minipage}{\linewidth}
\footnotesize
\textbf{Note.}
\(P\), \(T\), \(S\), and \(G\) denote PSNR, computation time (s),
SSIM, and GMSD, respectively.
The symbols \(\uparrow\) and \(\downarrow\) indicate that
larger and smaller values are preferable, respectively.
\end{minipage}
\end{table}

\begin{figure}[h]
    \centering
    \includegraphics[width=0.7\textwidth]{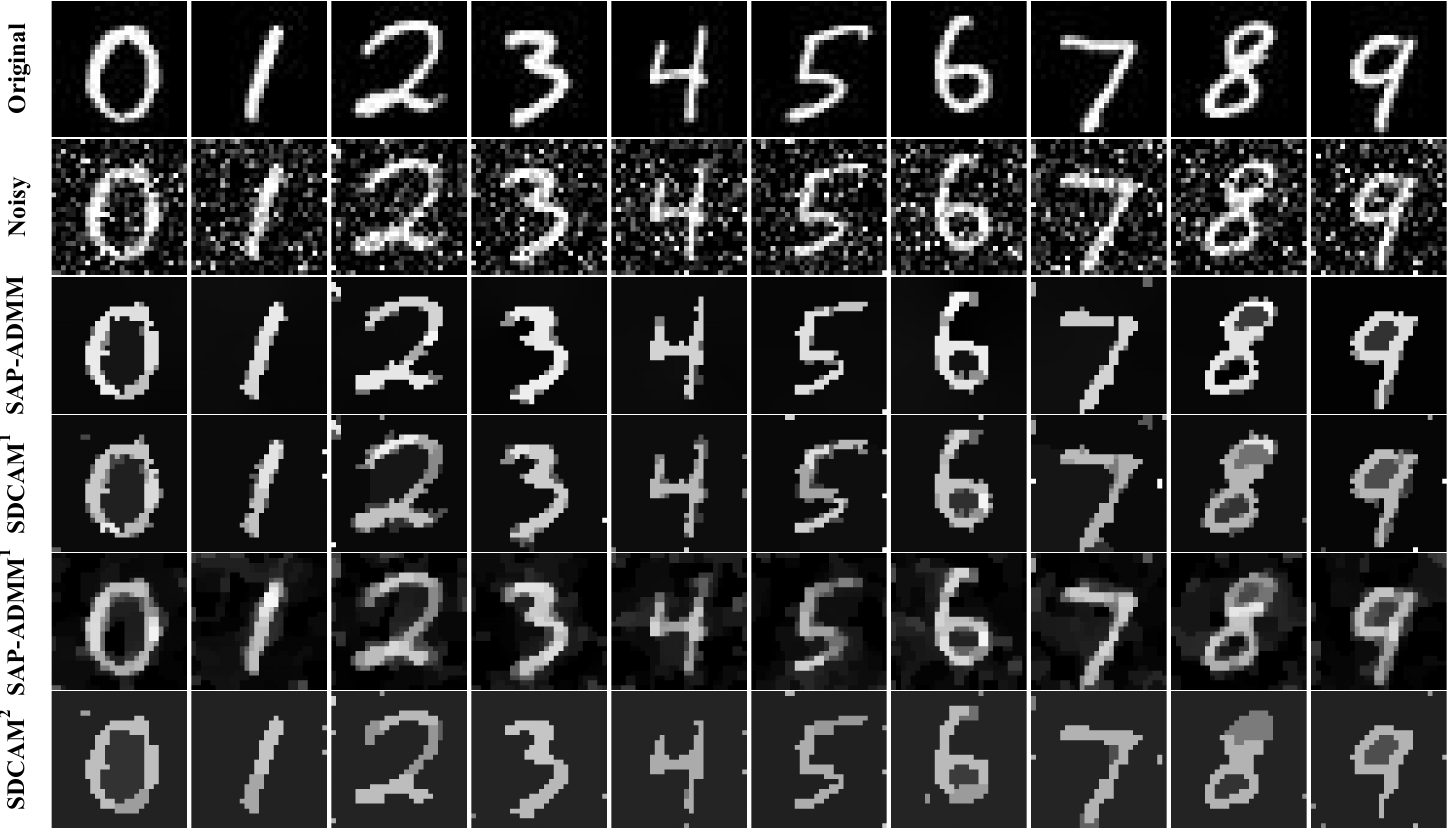}
    \caption{Original images, noisy observations, and restorations
    obtained by the four compared algorithms applied to their
    respective models.}
    \label{fig:mnist}
\end{figure}

For a visual comparison, Fig.~\ref{fig:mnist} presents the
restored images from one trial.
The \(\ell_1\)-\(C_{\ell_1}\) model solved by SAP-ADMM yields
cleaner backgrounds and sharper digit contours than the
other tested models solved by their corresponding algorithms.
The \(\ell_1\)-\(\ell_1\) model solved by
SAP-ADMM\textsuperscript{1} produces the most blurred digits,
consistent with its largest average GMSD value in
Table~\ref{tab:mnist_four_models_compact}.
These results indicate that, even when \(D\) does not have full
row rank, SAP-ADMM remains numerically effective in solving the
\(\ell_1\)-\(C_{\ell_1}\) model and performs well in image denoising.

\section{Conclusions}
\label{sec:conclusions}

In this paper, we studied the structured sparse optimization problem~\eqref{ob0} with a nonsmooth convex loss function and a nonconvex $\ell_0$ penalty on $Dx$. Under suitable assumptions, we established a sequence of equivalent reformulations for problem~\eqref{ob0}, including exact penalty reformulations and exact continuous relaxations, which culminate in a tractable problem~\eqref{rp1}. We proved that $x^*$ is a global minimizer of
problem~\eqref{ob0} if and only if $(x^*,Dx^*)$ is a global
minimizer of problem~\eqref{rp1}. Furthermore, a point $\hat x$ is a local minimizer of problem~\eqref{ob0}, with $D\hat x$ satisfying a desired lower bound property, if and only if $(\hat x, D\hat x)$ is a weak $d$-stationary point of problem~\eqref{rp1}.
Building upon this reformulation framework, we proposed a SAP-ADMM algorithm to solve problem~\eqref{rp1}. We first showed that, after finitely many iterations, SAP-ADMM reduces to either the fast KM iteration or the Halpern iteration. Based on this property, we establish the global convergence of both 
$\{(\bar{x}^k,\bar{y}^k)\}$ and $\{(x^k,y^k)\}$ generated by the ADMM algorithm to the common weak
$d$-stationary point of \eqref{rp1} without relying on the K\L\ property. Under different parameter settings corresponding to the two iteration regimes, we derived either a nonasymptotic convergence rate of \(O(1/k)\) or an asymptotic convergence rate of \(o(1/k)\). Finally, numerical experiments confirmed the effectiveness of both the proposed model and the SAP-ADMM algorithm.

\medskip

\textbf{Code availability.}
The Python code for the numerical experiments in this paper is available
in a public GitHub repository at
\url{https://github.com/TAO-HY/SAP-ADMM-for-Structured-Cardinality-Optimization}.

\medskip

\textbf{Declarations.}
The authors declare that they have no conflicts of interest.


\bibliographystyle{siamplain}
\bibliography{myreferences}
\end{document}

%% file: ex_shared.tex
\usepackage{lipsum}
\usepackage{amsfonts}
\usepackage{graphicx}
\usepackage{epstopdf}
\usepackage{algorithm}
\usepackage[noEnd=false,indLines=true]{algpseudocodex}
\ifpdf
  \DeclareGraphicsExtensions{.eps,.pdf,.png,.jpg}
\else
  \DeclareGraphicsExtensions{.eps}
\fi

\newsiamremark{remark}{Remark}
\newsiamremark{hypothesis}{Hypothesis}
\crefname{hypothesis}{Hypothesis}{Hypotheses}
\newsiamthm{claim}{Claim}
\newsiamremark{fact}{Fact}
\crefname{fact}{Fact}{Facts}

\headers{SAP-ADMM for Structured Cardinality Optimization}{W. Bian, H. Y. Tao, and F.Wu}

\title{A Safeguarded Accelerated Proximal ADMM Algorithm for Solving Structured Cardinality Penalized Optimization Problems\thanks{Submitted to the editors DATE.
\funding{The first author was partially supported by the National Natural Science Foundation of China grants \textcolor{blue}{(12425115)}.}}}

\author{Wei Bian\thanks{Corresponding author. School of Mathematics, Harbin Institute of Technology, Harbin, China
  (\email{bianweilvse520@163.com}).}
\and Hongyuan Tao\thanks{School of Mathematics, Harbin Institute of Technology, Harbin, China 
  (\email{taohy163@163.com}, \email{wufanmath@163.com}).}
\and Fan Wu\footnotemark[3]}

\usepackage{amsopn}


%% file: myreferences.bib
@article{LeCun1998,
    title={Gradient-based learning applied to document recognition},
    author={LeCun, Yann and Bottou, L{\'e}on and Bengio, Yoshua and Haffner, Patrick},
    journal={Proceedings of the IEEE},
    volume={86},
    number={11},
    pages={2278-2324},
    year={1998},
}

@article{Wang2004,
    title={Image quality assessment: from error visibility to structural similarity},
    author={Wang, Zhou and Bovik, Alan C. and Sheikh, Hamid R. and Simoncelli, Eero P.},
    journal={IEEE Transactions on Image Processing},
    volume={13},
    number={4},
    pages={600-612},
    year={2004},
}

@article{Xue2014,
    title={Gradient magnitude similarity deviation: a highly efficient perceptual image quality index},
    author={Xue, Wufeng and Zhang, Lei and Mou, Xuanqin and Bovik, Alan C.},
    journal={IEEE Transactions on Image Processing},
    volume={23},
    number={2},
    pages={684-695},
    year={2014},
}

@article{bot2020The,
	title={The proximal alternating direction method of multipliers in the nonconvex setting: convergence analysis and rates},
	author={Bo\c{t}, Radu Ioan and Nguyen, Dang-Khoa},
	journal={Mathematics of Operations Research},
	volume={45},
	number={2},
	pages={682-712},
	year={2020},
}

@article{Bolte2018NO,
	title={Nonconvex {L}agrangian-based optimization: monitoring schemes and global convergence},
	author={Bolte, J\'{e}r\^{o}me and Sabach, Shoham and Teboulle, Marc},
	journal={Mathematics of Operations Research},
	volume={43},
	number={4},
	pages={1210-1232},
	year={2018},
}

@article{Li2015Glo,
	title={Global convergence of splitting methods for nonconvex composite optimization},
	author={Li, Guoyin and Pong, Ting Kei},
	journal={SIAM Journal on Optimization},
	volume={25},
	number={4},
	pages={2434-2460},
	year={2015},
}

@article{Sun2025Ac,
	title={Accelerating preconditioned {ADMM} via degenerate proximal point mappings},
	author={Sun, Dengfeng and Yuan, Yancheng and Zhang, Guojun and Zhao, Xinyuan},
	journal={SIAM Journal on Optimization},
	volume={35},
	number={2},
	pages={1165-1193},
	year={2025},
}

@article{TRAN2018a,
	title={A smooth primal-dual optimization framework for nonsmooth composite convex minimization},
	author={Tran-Dinh, Quoc and Fercoq, Olivier and Cevher, Volkan},
	journal={SIAM Journal on Optimization},
	volume={28},
	number={1},
	pages={96-134},
	year={2018},
}

@article{bot2015AV,
	title={A variable smoothing algorithm for solving convex optimization problems},
	author={Bo\c{t}, Radu Ioan and Hendrich, Christopher},
	journal={TOP},
	volume={23},
	number={},
	pages={124-150},
	year={2015},
}

@article{bot2020V,
	title={Variable smoothing for convex optimization problems using stochastic gradients},
	author={Bo\c{t}, Radu Ioan and B\"{o}hm, Axel},
	journal={Journal of Scientific Computing},
	volume={85},
	number={2},
	pages={},
	year={2020},
}

@article{Ga1976,
	title={A dual algorithm for the solution of nonlinear variational problems via finite element approximation},
	author={Gabay, Daniel and Mercier, Bertrand},
	journal={Computers \& Mathematics with Applications},
	volume={2},
	number={1},
	pages={17-40},
	year={1976},
}

@article{EC1992o,
	title={On the {D}ouglas-{R}achford splitting method and the proximal point algorithm for maximal monotone operators},
	author={Eckstein, Jonathan and Bertsekas, Dimitri P},
	journal={Mathematical Programming},
	volume={55},
	number={3},
	pages={293-318},
	year={1992},
}

@article{lieder2021convergence,
  title={On the convergence rate of the {H}alpern-iteration},
  author={Lieder, Felix},
  journal={Optimization Letters},
  volume={15},
  number={2},
  pages={405--418},
  year={2021},
  publisher={Springer}
}

@article{xu2002another,
  title={Another control condition in an iterative method for nonexpansive mappings},
  author={Xu, Hong-Kun},
  journal={Bulletin of the Australian Mathematical Society},
  volume={65},
  number={1},
  pages={109--113},
  year={2002},
  publisher={Cambridge University Press}
}

@article{L2019A,
	title={A successive difference-of-convex approximation method for a class of nonconvex nonsmooth optimization problems},
	author={Liu, Tianxiang and Pong, Ting Kei and Takeda, Akiko},
	journal={Mathematical Programming},
	volume={176},
	number={1-2},
	pages={339-367},
	year={2019},
}

@inproceedings{Par2015C,
  title={Convex fused lasso denoising with non-convex regularization and its use for pulse detection},
  author={Parekh, Ankit and Selesnick, Ivan W},
  booktitle={2015 IEEE Signal Processing in Medicine and Biology Symposium (SPMB)},
  pages={1--6},
  year={2015},
  organization={IEEE}
}

@article{Ti2005Sp,
	title={Sparsity and smoothness via the fused lasso},
	author={Tibshirani, Robert and Saunders, Michael and Rosset, Saharon and Zhu, Ji and Knight, Keith},
	journal={Journal of the Royal Statistical Society Series B-Statistical Methodology},
	volume={67},
	number={1},
	pages={91-108},
	year={2005},
}

@article{Ti2011The,
	title={The solution path of the generalized Lasso},
	author={Tibshirani, Ryan J and Taylor, Jonathan},
	journal={The Annals of Statistics},
	volume={39},
	number={3},
	pages={1335-1371},
	year={2011},
}

@article{Wu2025An,
	title={An inexact projected regularized Newton method for fused zero-norms regularization problems},
	author={Wu, Yuqia and Pan, Shaohua and Yang, Xiaoqi},
	journal={Journal of Machine Learning Research},
	volume={25},
	number={373},
	pages={1-48},
	year={2024},
}

@article{Xie2026N,
	title={Nonconvex truncated conditional value at risk-based sparse linear regression},
	author={Xie, Boyi and Wu, Zhongming and Li, Min},
	journal={European Journal of Operational Research},
	volume={328},
	number={1},
	pages={246-257},
	year={2026},
}

@article{Le2015D,
	title={DC approximation approaches for sparse optimization},
	author={Le Thi, H A and Dinh, T Pham and Le, H M and Vo, X T},
	journal={European Journal of Operational Research},
	volume={244},
	number={1},
	pages={26-46},
	year={2015},
}

@article{Char2008R,
	title={Restricted isometry properties and nonconvex compressive sensing},
	author={Chartrand, Rick and Staneva, Valentina},
	journal={Inverse Problems},
	volume={24},
	number={3},
	pages={},
	year={2008},
}

@article{Bian2017O,
	title={Optimality and complexity for constrained optimization problems with nonconvex regularization},
	author={Bian, Wei and Chen, Xiaojun},
	journal={Mathematics of Operations Research},
	volume={42},
	number={4},
	pages={1063-1084},
	year={2017},
}

@article{Pel2008A,
	title={A bilinear formulation for vector sparsity optimization},
	author={Peleg, D and Meir, R},
	journal={Signal Processing},
	volume={88},
	number={5},
	pages={375-389},
	year={2008},
}

@article{Chen2010L,
	title={Lower bound theory of nonzero entries in solutions of $\ell_2-\ell_p$ minimization},
	author={Chen, Xiaojun and Xu, Fengmin and Ye, Yinyu},
	journal={SIAM Journal on Scientific Computing},
	volume={32},
	number={5},
	pages={2832-2852},
	year={2010},
}

@article{Candes2006Robust,
	title={Robust uncertainty principles: exact signal reconstruction from highly incomplete frequency information},
	author={Cand\`{e}s, E and Romberg, J and Tao, T},
	journal={IEEE Transactions on Information Theory},
	volume={52},
	number={2},
	pages={489-509},
	year={2006},
}

@article{Beck2009A,
	title={A fast iterative shrinkage-thresholding algorithm for linear inverse problems},
	author={Beck, A and Teboulle, M},
	journal={SIAM Journal on Imaging Sciences},
	volume={2},
	number={1},
	pages={183-202},
	year={2009},
}

@article{Zhou2021N,
	title={{N}ewton method for $\ell_0$ regularized optimization},
	author={Zhou, Shenlong and Pan, Lili and Xiu, Naihua},
	journal={Numerical Algorithms},
	volume={88},
	number={4},
	pages={1541-1570},
	year={2021},
}

@article{Ye2026S,
	title={Subspace {N}ewton's method for $\ell_0$ regularized optimization problems with box constraints},
	author={Ye, Yuge and Li, Qingna},
	journal={Journal of Scientific Computing},
	volume={106},
	number={68},
	pages={},
	year={2026},
}

@article{Bian2023A,
	title={Accelerated smoothing hard thresholding algorithms for $\ell_0$ regularized nonsmooth convex regression problem},
	author={Bian, Wei and Wu, Fan},
	journal={Journal of Scientific Computing},
	volume={96},
	number={33},
	pages={},
	year={2023},
}

@article{Lu2012Sparse,
	title={Sparse approximation via penalty decomposition methods},
	author={Lu, Z and Zhang, Y},
	journal={SIAM Journal on Optimization},
	volume={23},
	number={4},
	pages={2448-2478},
	year={2013},
}

@article{Lu2014Iterative,
	title={Iterative hard thresholding methods for $\ell_0$ regularized convex cone programming},
	author={Lu, Z},
	journal={Mathematical Programming},
	volume={147},
	number={1-2},
	pages={125-154},
	year={2014},
}

@article{bot2023fast,
  title   = {Fast {Krasnosel'ski\u{\i}--Mann} Algorithm with a
             Convergence Rate of the Fixed Point Iteration of {$o(1/k)$}},
  author  = {Bo{\c{t}}, Radu Ioan and Nguyen, Dang-Khoa},
  journal = {SIAM Journal on Numerical Analysis},
  volume  = {61},
  number  = {6},
  pages   = {2813-2843},
  year    = {2023},
}

@article{Blumensath2008Iterative,
	title={Iterative thresholding for sparse approximations},
	author={Blumensath, T and Davies, M},
	journal={Journal of Fourier Analysis and Applications},
	volume={14},
	number={5-6},
	pages={629-654},
	year={2008},
}

@article{Blumensath2009Iterative,
	title={Iterative hard thresholding for compressed sensing},
	author={Blumensath, T and Davies, M},
	journal={Applied and Computational Harmonic Analysis},
	volume={27},
	number={3},
	pages={265-274},
	year={2009},
}

@article{F2001V,
	title={Variable selection via nonconcave penalized likelihood and its oracle properties},
	author={Fan, Janqing and Li, Runze},
	journal={Journal of the American Statistical Association},
	volume={96},
	number={456},
	pages={1348-1360},
	year={2001},
}

@article{E2005Q,
	title={Quantile smoothing of array CGH data},
	author={Eilers, Paul H C and Menezes, Renee X de},
	journal={Bioinformatics},
	volume={21},
	number={7},
	pages={1146-1153},
	year={2005},
}

@article{J2020F,
	title={Fast nonconvex deconvolution of calcium imaging data},
	author={Jewell, Sean W and Hocking, Toby Dylan and Fearnhead, Paul and Witten, Daniela M},
	journal={Biostatistics},
	volume={21},
	number={4},
	pages={709-726},
	year={2020},
}

@article{L1997v,
	title={Variable fusion: a new adaptive signal regression method},
	author={Land, Stephanie R and Friedman, Jerome H},
	journal={Technical report. Technical Report 656, Department of Statistics, Carnegie Mellon University},
	volume={},
	number={},
	pages={},
	year={1997},
}

@book{rockafellar2009variational,
	author={Rockafellar, R. Tyrrell and Wets, Roger J. B.},
	title={Variational Analysis},
	publisher={Springer},
	address={Berlin},
	year={2009},
    edition={3rd},
	
}

@book{RocC,
	 author    = {Rockafellar, R. Tyrrell},
  title     = {Convex Analysis},
  publisher = {Princeton University Press},
  address   = {Princeton, NJ},
  year      = {1970}
}

@article{C2017,
	title={Computing {B}-stationary points of nonsmooth {DC} programs},
	author={Pang, Jong Shi and Razaviyayn, Meisam and Alvarado, Alberth},
	journal={Mathematics of Operations Research},
	volume={42},
	number={1},
	pages={95-118},
	year={2017},
}

@article{Bian2020a,
  title={A smoothing proximal gradient algorithm for nonsmooth convex regression with cardinality penalty},
  author={Bian, W and Chen, X},
  journal={SIAM Journal on Numerical Analysis},
  volume={58},
  number={1},
  pages={858-883},
  year={2020},
}
